\newtheorem*{THM}{Theorem}
\DeclareMathOperator{\LBAxx}{\calL_{BA,\bbU}}
\newcommand{\cstar}{$\textrm{C}^*$}
\newcommand{\sfU}{\mathsf U} 
\newcommand{\sfX}{\mathsf X} 
\newcommand{\sfY}{\mathsf Y} 
\newcommand{\dminus}{\dot -}
\newcommand{\bbJ}{\mathbb J}
\newcommand{\bt}{\mathbf t}
\newcommand{\bbN}{{\mathbb N}}
\newcommand{\bbQ}{{\mathbb Q}}
\newcommand{\bbS}{{\mathbb S}}
\newcommand{\bbG}{{\mathbb G}}
\newcommand{\bbF}{\mathbb F}
\newcommand{\bbR}{\mathbb R}
\newcommand{\bbC}{\mathbb C}
\newcommand{\bbK}{{\mathbb K}}
\newcommand{\bbL}{{\mathbb L}}
\newcommand{\cJ}{{\mathcal J}}
\newcommand{\cK}{{\mathcal K}}
\newcommand{\calL}{\mathcal L}
\newcommand{\cZ}{{\mathcal Z}}
\newcommand{\cE}{{\mathcal E}}
\DeclareMathOperator{\id}{id}
\newcommand{\cF}{\mathcal F}
\newcommand{\e}{\varepsilon}
\newtheorem{thm}{Theorem}
\newtheorem{theorem}{Theorem}[section] 
\newtheorem{corollary}[theorem]{Corollary}
\newtheorem{question}[theorem]{Question}
\newtheorem{claim}[theorem]{Claim}
\newtheorem{lemma}[theorem]{Lemma}
\newtheorem{prop}[theorem]{Proposition}
\theoremstyle{definition}
\newtheorem{definition}[theorem]{Definition}
\newtheorem{example}[theorem]{Example}
\DeclareMathOperator{\Fin}{Fin}
\newcounter{my_enumerate_counter}
\newcommand{\pushcounter}{\setcounter{my_enumerate_counter}{\value{enumi}}}
\newcommand{\popcounter}{\setcounter{enumi}{\value{my_enumerate_counter}}}
\newcommand{\cP}{{\mathcal P}}
\newcommand{\cO}{{\mathcal O}}
\newcommand{\cU}{{\mathcal U}}
\newcommand{\cV}{\mathcal V}
\newcommand{\bbZ}{\mathbb Z}
\newcommand{\bbB}{\mathbb B}
\newcommand{\cI}{\mathcal I}
\DeclareMathOperator{\Ad}{Ad}
\newcommand{\twolN}{\{0,1\}^{<\bbN}}
\newcommand{\fB}{\mathfrak B}
\numberwithin{equation}{section}
\DeclareMathOperator{\Clop}{Clop}
\newcommand{\bbU}{\mathbb U}
\DeclareMathOperator{\dom}{dom}
\title{Between reduced powers and ultrapowers}
\author{Ilijas Farah}
\address{Department of Mathematics and Statistics,
York University,
4700 Keele Street,
Toronto, Ontario, Canada, M3J
1P3} 
\address{Matemati\v cki Institut SANU\\
Kneza Mihaila 36\\
11\,000 Beograd, p.p. 367\\
Serbia}
\email{email: ifarah@yorku.ca}
\urladdr{http://www.math.yorku.ca/$\sim$ifarah}
\thanks{ORCID iD https://orcid.org/0000-0001-7703-6931}
\subjclass[2010]{03C20, 03C98, 03E50, 46L05}
\keywords{Ultrapowers, reduced powers, functorial classification, saturated models, P-points, Continuum Hypothesis.}
\date{\today}
\begin{document}
\begin{abstract} One of our results is a transfer principle between ultrapowers and reduced powers associated with the Fr\'echet ideal. Although motivated by the Elliott classification programme, this result applies to any axiomatizable category. We also show that there exists a nonprincipal  ultrafilter $\cU$ on $\bbN$ such that for every countable (or separable metric) structure $B$ in a countable language the quotient map from the reduced power associated with the Fr\'echet ideal onto an ultrapower  has a right inverse. While the transfer principle is proved without appealing to additional set-theoretic axioms, the conclusion of the latter theorem relies on  the Continuum Hypothesis and it is independent from the standard axioms of set theory.  We also prove that in the category of \cstar-algebras  tensoring with the \cstar-algebra of all continuous functions on the Cantor space preserves elementarity. As a side note,    neither the Jiang--Su algebra $\cZ$ nor any UHF algebra share this property. 
\end{abstract} 
\maketitle

 Ultrapowers (see \S\ref{S.Reduced} for the definitions) have been a part of the standard toolbox in logic, combinatorics, functional analysis, and algebra for decades.\footnote{In this paper, $\cU$ is a nonprincipal ultrafilter on $\bbN$. A typical application of ultrapowers in set theory is concerned with ultrapowers of  transitive models of (a large enough fragment of) ZFC, and it is desirable that these ultrapowers be well-founded. This requires~$\cU$ to be $\sigma$-closed, hence excluding nonprincipal ultrafilters on $\bbN$ and other small sets.} They are frequently used as a `magnifying glass' focused on a countable (or separable metric) structure $A$. Taking the ultrapower of $A$ results in a saturated (see \S\ref{S.Types}) elementary extension $A^\cU$ of $A$. Various asymptotic phenomena of sequences in   $A$ propagate to $A^\cU$, where they are witnessed by its \emph{elements}. This often makes the arguments more succinct and (to those familiar with the method) transparent. This method is a textbook example  of Shelah's  Mountain Air Thesis, \cite[6.4(b)]{shelah2003logical}.

        Ultrapowers were introduced independently to operator algebras, shortly before \L o\'s's introduction of ultrapowers to logic (see the first page of \cite{sherman2009notes}). The method of classifying separable operator algebras by studying their position inside an ultrapower dates back to the seminal   \cite{McDuff:Central} and \cite{Connes:Class}. McDuff's  results were adapted to \cstar-algebras in \cite{effros1978c}, with a small twist. Instead of ultrapowers, Effros and Rosenberg used the asymptotic sequence algebra $\ell_\infty(A)/c_0(A)$ (known to logicians as the reduced power associated with the Fr\'echet ideal). This algebra is at some level easier to grasp: Unlike  ultrapowers, it is canonical and its construction  does not require any form of the Axiom of Choice.  Although reduced powers (and reduced products) have been studied by logicians for decades, the absence of \L o\'s's Theorem renders them not as effective as the ultrapowers (see however    footnote~\ref{Foot.reindexing}).   
        
        The fact that ultrapowers are often interchangeable with asymptotic sequence algebras within the theory of \cstar-algebras  is quite puzzling. One of the objectives of this paper is to establish a relation between ultrapowers and reduced powers associated with the Fr\'echet ideal of countable (and separable metric) structures that explains this phenomenon. Our main result  transcends the category of \cstar-algebras and is applicable to  the category of structures in any countable language (discrete or metric). The direct motivation for this work stems from a concrete problem encountered in  Elliott's classification programme for nuclear \cstar-algebras.



In 1989, Elliott conjectured that certain functor $F$ (presently known as  \emph{Elliott's invariant}) classifies a large and important class of \cstar-algebras (see~\cite{Ror:Classification}, \cite{winter2017structure}, and also  \cite{elliott2015classification} for a direct approach without using ultrapowers or reduced powers).\footnote{For a curious reader, this is the class of  nuclear, separable, unital, simple, \cstar-algebras that satisfy the Universal Coefficient Theorem (UCT) and  tensorially absorb the Jiang--Su algebra $\cZ$; but this is besides the point.} Ultrapowers are used in the classification programme of \cstar-algebras in situations  where  the simplicity of massive quotient algebra is desirable (as in the Kirchberg--Phillips classification of Kirchberg algebras, \cite{Phi:Classification}) and in the stably finite case, when one takes direct  advantage of the tracial ultrapower whose fibres are ultrapowers of~II$_1$ factors (for an excellent example of this technique see \cite{schafhauser2018subalgebras}). With the increased sophistication, one feature of the asymptotic sequence algebras not shared by the ultrapowers came to be seen as indispensable; we'll return to this in a moment. 

 A distinguishing requirement of the Elliott classification is its \emph{functoriality}: The functor  $F$ is required to have two properties, existence and uniqueness,  that we now describe in a greater generality needed later on.  

\begin{definition} \label{Def.Realized} 
If $F\colon \bbK\to \bbL$ is a functor, a morphism $\alpha\colon F(A)\to F(B)$ is \emph{realized} by a morphism $\Phi\colon A\to B$ if $F(\Phi)=\alpha$. 
More generally, given an arrow $\iota\colon B\to C$ into another object $C$, it is said that $\alpha$ is \emph{realized} by a morphism $\Phi\colon A\to C$ if $F(\iota)\circ \alpha$ is realized by $\Phi$. 
 \end{definition}

The functor $F$ satisfies   \emph{existence} if for all separable \cstar-algebras  $A$ and $B$, every morphism $\alpha\colon F(A)\to F(B)$ is realized by a morphism $\Phi\colon A\to B$.

The statement of uniqueness involves a notion of equivalence  between morphisms.  Two $^*$-homomorphisms, $\Psi_1$ and $\Psi_2$, from $A$ into $C$ are (\emph{approximately}) \emph{unitarily equivalent} if  there is a net  of unitaries $u_\lambda$ such that  $\Ad u_\lambda\circ \Psi_2$ converges  to $\Psi_1$ in the point-norm topology. 

The functor $F$ satisfies \emph{uniqueness} if all morphisms that realize the same~$\alpha$ are approximately unitarily equivalent. Since approximately unitarily equivalent morphisms share their  Elliott invariant, functorial classification requires~$F$ to be an isomorphism of categories, with the appropriately identified arrows in the domain.

 An intermediate step in proving that a morphism $\alpha\colon F(A)\to F(B)$ is realized by some $\Phi\colon A\to B$ often involves a massive extension (an ultrapower or an asymptotic sequence algebra) $C$ of $B$, with the diagonal embedding $\iota\colon B\to C$, in which one proves that $\alpha$ is realized (in the sense of the second part of Definition~\ref{Def.Realized}) by $\Phi\colon A\to C$. If $C$ is the asymptotic sequence algebra $B^\infty:=\ell^\infty(B)/c_0(B)$, finding such $\Phi$ is, together with uniqueness,  all that it takes to prove the existence. We take a moment to briefly describe the reason for this. Suppose $B$ is a metric structure. Every injection $f\colon \bbN\to \bbN$ defines an endomorphism $\Phi_f$ of $B^\infty$, by its action on the representing sequences (see \S\ref{S.Reduced}):  
\begin{equation}\label{Eq.Phif}
	\Phi_f((a_i)_{i\in \bbN})=(a_{f(i)})_{i\in \bbN}.
\end{equation}
  The following is \cite[Theorem~4.3]{gabe2019new} and (essentially)    \cite[Proposition~1.37]{Phi:Classification} (it is proven using a variant of the approximate intertwining technique).\footnote{\label{Foot.reindexing} This result comes across as a special instance of a general model-theoretic result that can be applied to metric structures with an appropriately defined and well-behaved notion of an inner automorphisms, waiting to be isolated.} 
  
\begin{THM} If $A$ and $B$ are \cstar-algebras and $A$ is   separable, then a  $^*$-homo\-morphism $\Psi\colon A\to B^\infty$  is unitarily equivalent to a $^*$-homomorphism whose range is included in (the diagonal copy of) $B$ if and only if it is approximately unitarily equivalent to $\Phi_f\circ \Psi$ for every increasing $f\colon \bbN\to \bbN$. \qed
\end{THM}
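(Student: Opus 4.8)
The plan is to treat the two implications separately, since the forward direction is essentially formal while the reverse direction carries all the weight.

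For the ``only if'' direction, suppose $\Psi = \Ad u \circ (\iota \circ \psi)$ for some $^*$-homomorphism $\psi \colon A \to B$, where $\iota \colon B \to B^\infty$ is the diagonal embedding and $u$ is a unitary in the unitization (or multiplier algebra) of $B^\infty$. The key observation is that each $\Phi_f$ fixes the diagonal pointwise: applying $\Phi_f$ to a constant representing sequence returns the same constant sequence, so $\Phi_f \circ \iota = \iota$. Since $\Phi_f$ is a unital $^*$-endomorphism it satisfies $\Phi_f \circ \Ad u = \Ad \Phi_f(u) \circ \Phi_f$ and carries unitaries to unitaries. Combining these,
\[
\Phi_f \circ \Psi = \Ad \Phi_f(u) \circ \Phi_f \circ \iota \circ \psi = \Ad \Phi_f(u) \circ \iota \circ \psi = \Ad\bigl(\Phi_f(u) u^*\bigr) \circ \Psi,
\]
so $\Phi_f \circ \Psi$ is even genuinely unitarily equivalent to $\Psi$, for every increasing $f$. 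This requires neither separability of $A$ nor any intertwining.

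For the ``if'' direction I would run a one-sided approximate intertwining in the sense of Elliott. Fix a dense sequence $(a_n)_n$ in $A$ and lift $\Psi$ to a representing sequence of maps $\psi_i \colon A \to B$, chosen so that each $\psi_i$ is asymptotically linear, multiplicative and $^*$-preserving on $\{a_n : n \le i\}$ with errors tending to $0$; this is possible because $A$ is separable and $\Psi$ is an honest $^*$-homomorphism into the quotient. In this picture $\Phi_f \circ \Psi$ is represented by $(\psi_{f(i)})_i$, and the hypothesis says that for every increasing $f$, every finite $F \subseteq A$ and every $\varepsilon > 0$ there is a unitary $u$ with $\|\Ad u\,(\Phi_f \circ \Psi)(a) - \Psi(a)\| < \varepsilon$ on $F$. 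Lifting $u$ to a bounded sequence of unitaries, this unpacks to the statement that after reindexing by $f$ one can conjugate the tail of $(\psi_i)_i$ back onto $(\psi_i)_i$, up to arbitrarily small error on arbitrarily large finite sets.

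The construction then proceeds recursively. Using instances of the hypothesis for ever faster-growing $f$ and shrinking $\varepsilon$, I would build an increasing sequence of indices together with unitaries $v_i$ in the unitization of $B$ so that the corrected maps $\Ad v_i \circ \psi_i$ converge in point-norm, on the dense set $(a_n)_n$, to a single map $\psi \colon A \to B$. The role of reindexing by fast-growing $f$ is exactly to let the recursion ``skip ahead'' far enough that previously stabilized coordinates are no longer disturbed, so that the telescoped corrections stabilize coordinatewise rather than merely along a subsequence. Because each $\psi_i$ is asymptotically multiplicative and the corrections are by unitaries, the point-norm limit $\psi$ is automatically an honest $^*$-homomorphism. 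Setting $w := \pi\bigl((v_i)_i\bigr)$, a unitary in the unitization of $B^\infty$, one checks $\Ad w \circ \Psi = \iota \circ \psi$, which is the desired unitary equivalence onto a map with range in the diagonal copy of $B$.

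The main obstacle is precisely the step of assembling the local (finite-set, $\varepsilon$-approximate) unitary equivalences into one global unitary $w$ with $\Ad w \circ \Psi$ landing \emph{exactly} in $B$. The danger is that naively patching conjugations for successive finite sets destroys the agreement already achieved on earlier sets; controlling this is what forces the telescoping bookkeeping and the use of sufficiently lacunary reindexings $f$, and is the reason the argument is a genuine approximate-intertwining argument rather than a one-step diagonalization. A secondary point to handle with care is the non-unital case, where $\Ad$ must be implemented by unitaries in multiplier or unitization algebras and one must verify that $\Phi_f$ and the diagonal embedding behave well there.
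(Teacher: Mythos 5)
First, a point of comparison: the paper does not prove this statement at all. It is quoted with a \qed{} as Gabe's Theorem~4.3 and (essentially) Phillips' Proposition~1.3.7, with the remark that it is proven by a variant of the approximate intertwining technique. So the relevant comparison is with those cited proofs, and your sketch does follow their general strategy. Your ``only if'' direction is complete and correct: $\Phi_f$ fixes the diagonal, so $\Phi_f\circ\Psi=\Ad\bigl(\Phi_f(u)u^*\bigr)\circ\Psi$, giving genuine (not just approximate) unitary equivalence.

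The ``if'' direction, however, has a genuine gap at exactly the point where the content lies. The hypothesis can only be invoked for a \emph{fully specified} increasing $f$, and each such invocation yields, after lifting, the following: for each finite $F$ and $\e>0$ there is a threshold $I$ (coming out of the $\limsup$, and not under your control) such that $\psi_{f(i)}$ is $\e$-conjugate to $\psi_i$ on $F$ for all $i\geq I$. Your recursion proposes to pick indices adaptively and then call on ``instances of the hypothesis for ever faster-growing $f$''; but when you need a link starting at your current index $n_k$, any instance you invoke only guarantees links at coordinates beyond its own threshold, which may exceed $n_k$ for \emph{every} choice of $f$ --- nothing in the hypothesis rules this out. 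Nor does fixing $f$ (or a countable family of $f$'s) in advance help: the thresholds attached to $(F_k,2^{-k})$ may grow faster than any orbit of $f$, and chaining links whose quality improves too slowly produces non-summable errors, so the telescoped maps need not converge. The cited proofs resolve this by a quantifier reversal run as a contrapositive: one first shows that for every finite $F$, $\e>0$, and $N$ there exists $n\geq N$ such that \emph{every} $m\geq n$ admits a unitary conjugating $\psi_m$ to within $\e$ of $\psi_n$ on $F$. Indeed, if this failed, each $n$ would have a witness $m(n)>n$ admitting no such unitary; by pigeonhole, either $m(n)-n$ takes some value $d$ infinitely often (take $f(i)=i+d$) or $m(n)-n\to\infty$ (then one can thread a strictly increasing $f$ through infinitely many pairs $(n,m(n))$), and in either case the hypothesis for that $f$ is contradicted. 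With this ``for some $n$, for all $m\geq n$'' statement in hand, your telescoping --- including the step that upgrades convergence along a subsequence to conjugation of \emph{all} coordinates, and the unitization bookkeeping you flagged --- goes through essentially as you describe; without it, the recursion cannot get started.
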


As an immediate corollary, if $F$ is a functor whose domain is the category of \cstar-algebras that satisfies uniqueness and $A$ and $B$ are separable, then (using the terminology from the second sentence of Definition~\ref{Def.Realized}) a morphism $\alpha\colon F(A)\to F(B)$ is realized by a $^*$-homomorphism $\Phi\colon A\to B$ if and only if it is realized by a $^*$-homomorphism $\Phi\colon A\to B^\infty$. 
   
 If $\cU$ is an ultrafilter and $f$ is a function from the index-set of $\cU$ into itself, then $f$ is either constant on a set of $\cU$ or it sends a set in $\cU$ to a set that does not belong to $\cU$ (e.g., \cite[Lemma~9.4.5]{Fa:STCstar}). Because of this, a map of the form $\Phi_f$  as in \eqref{Eq.Phif} is an endomorphism of an ultrapower if and only if it is the identity map, hence there is no analog of the reindexing technique for ultrapowers in place of the asymptotic sequence algebras.  Ironically, it is often easier to find $\Phi$ that realizes a morphism $\alpha\colon F(A)\to F(B)$    in an ultrapower of~$B$.

To recapitulate: A morphism $\alpha\colon F(A)\to F(B)$ can be realized by a $^*$-homomorphism from $A$ into $B^\cU$, but this is not quite as helpful as realizing $\alpha$ by a $^*$-homomorphism from $A$ into $B^\infty$. 
This `gap' begs a question, asked by Chris Schafhauser and Aaron Tikuisis and  answered by the following (for an arbitrary metric structure $B$, by $B^\infty$ we denote the reduced power $\prod_{\Fin} B$; in the case of \cstar-algebras, this reduces to $\ell^\infty(B)/c_0(B)$ as used above).

\begin{thm}
	 \label{T.AaCh} 
Suppose $F$ is a functor whose domain is a 
category $\bbK$ of structures in some countable language.  For separable  $A$ and $B$ in $\bbK$, a nonprincipal ultrafilter $\cU$ on $\bbN$,  and a  morphism $\alpha\colon F(A)\to F(B)$ 
the following are equivalent.
\begin{enumerate}
\item \label{1.C0} The morphism $\alpha$ is realized 
 by a morphism $\Phi\colon A\to B^\cU$
 for some (any) nonprincipal ultrafilter $\cU$ on $\bbN$. 
\item \label{2.C0} The morphism $\alpha$ is realized by a morphism $\Phi\colon A\to B^\infty$. 
\end{enumerate}
\end{thm}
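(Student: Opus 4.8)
Throughout, write $\iota_\infty\colon B\to B^\infty$ and $\iota_\cU\colon B\to B^\cU$ for the diagonal embeddings and $\pi_\cU\colon B^\infty\to B^\cU$ for the canonical quotient map, which is well defined because the Fr\'echet filter is contained in $\cU$; these satisfy $\pi_\cU\circ\iota_\infty=\iota_\cU$. I will also use the reindexing endomorphisms $\Phi_h$ of $B^\infty$ from \eqref{Eq.Phif}, now allowing any nondecreasing $h\colon\bbN\to\bbN$ with $h(n)\to\infty$; each such $\Phi_h$ fixes the diagonal, i.e.\ $\Phi_h\circ\iota_\infty=\iota_\infty$. The implication $(2)\Rightarrow(1)$ is then immediate and uniform in $\cU$: if $\Phi\colon A\to B^\infty$ satisfies $F(\Phi)=F(\iota_\infty)\circ\alpha$, then for every nonprincipal $\cU$ the composite $\pi_\cU\circ\Phi$ satisfies $F(\pi_\cU\circ\Phi)=F(\pi_\cU)\circ F(\iota_\infty)\circ\alpha=F(\iota_\cU)\circ\alpha$, so it realizes $\alpha$. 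This also accounts for the parenthetical ``(any)''.

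For $(1)\Rightarrow(2)$, fix $\cU$ and $\Phi\colon A\to B^\cU$ with $F(\Phi)=F(\iota_\cU)\circ\alpha$. First I would fix a countable set generating a dense substructure of $A$ and lift the images of its elements under $\Phi$ to sequences in $B^{\bbN}$. The finitely many approximate relations that make these sequences the image of a homomorphism hold along $\cU$; enumerating all such conditions, I would extract an increasing $h$ enumerating a pseudo-intersection $S$ of the corresponding $\cU$-large ``good sets'', so that the relations hold with error tending to $0$ as one runs to infinity through $S$. Reindexing the representing sequences by the nondecreasing map that fills the gaps of $S$ from below then produces a genuine morphism $\Psi\colon A\to B^\infty$; equivalently, $\Psi$ is obtained by applying a reindexing endomorphism to the (non-multiplicative) elementwise lift of $\Phi$. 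This step is routine diagonalization and uses the countable saturation of $B^\infty$ only implicitly.

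The hard part will be verifying that $\Psi$ realizes $\alpha$, that is, $F(\Psi)=F(\iota_\infty)\circ\alpha$. The difficulty is structural: reindexing along a pseudo-intersection is incompatible with $\cU$, so $\pi_\cU\circ\Psi\neq\Phi$ in general, and in any case $F(\pi_\cU)$ is not assumed to be a monomorphism, so the desired equation cannot be recovered by pulling it back through $\pi_\cU$. Instead it must be produced over $B^\infty$ itself. The route I would take is to observe that $\pi_{\cU'}\circ\Psi=\Phi'$ honestly, where $\cU'$ is a nonprincipal ultrafilter concentrated on $S$ and $\Phi'$ is the morphism given by the same representing sequences read along $\cU'$ (a morphism precisely because the relations hold cofinitely on $S$), and then to transfer the realization of $\alpha$ from $\cU$ to $\cU'$ using that both $B^\cU$ and $B^{\cU'}$ are quotients of a single reduced product by a countably generated ideal on which the common lift is multiplicative, together with the fact that every reindexing fixes the diagonal. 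Establishing this ultrafilter-independence of realization---equivalently, that $F(\Psi)$ is forced to equal $F(\iota_\infty)\circ\alpha$ by the reindexing endomorphisms that have no analogue for ultrapowers---is the crux of the theorem and the point at which the special structure of the reduced power is indispensable.
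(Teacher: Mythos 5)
Your direction (2)$\Rightarrow$(1) and your diagnosis of where the difficulty sits are both correct, but the strategy you propose for (1)$\Rightarrow$(2) has a genuine gap at exactly the point you call the crux. The obstruction is that $F$ is an \emph{arbitrary} functor: the only way to convert the hypothesis $F(\Phi)=F(\iota_{B,\cU})\circ\alpha$ into information about another morphism is to compose $\Phi$ with an honest morphism of the category and apply functoriality. Your $\Psi\colon A\to B^\infty$ is built from representing sequences, i.e., from data that $F$ never sees: the ``elementwise lift'' of $\Phi$ is not a morphism, so $F$ cannot be applied to it, and nothing constrains $F(\Psi)$. The proposed repair---relating $\Psi$ to a morphism $\Phi'\colon A\to B^{\cU'}$ read along an ultrafilter $\cU'$ concentrated on the pseudo-intersection $S$---does not help, because nothing tells you that $\Phi'$ realizes $\alpha$: the hypothesis concerns $\cU$, the set $S$ need not belong to $\cU$ (if $\cU$ is not a P-point it typically will not), and there is no morphism in $\bbK$ intertwining $\Phi$ and $\Phi'$ to which $F$ could be applied. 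The ``ultrafilter-independence of realization'' you want to invoke is not an available lemma; in the paper it is a \emph{consequence} of the theorem (via (1)-for-some $\Rightarrow$ (2) $\Rightarrow$ (1)-for-any), so appealing to it here is circular. Indeed, your own correct remark that $F(\pi_\cU)$ need not be monic already shows that no manipulation of representing sequences can close this gap: what is needed is a section, not a lift.

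The paper closes the gap with Theorem~\ref{T.5.1}, a ZFC result that supplies precisely the missing categorical datum: there is a quadruple $(C,D,\pi,\Theta)$ with $C\subseteq B^\infty$ and $D\subseteq B^\cU$ separable, $D\supseteq B\cup\Phi[A]$, $\pi\colon C\to D$ a surjective homomorphism, $\Theta\colon D\to C$ a homomorphism with $\pi\circ\Theta=\id_D$, and $\pi$, $\Theta$ commuting with the diagonal copies of $B$. Granting this, the hard implication is one line of pure functoriality:
\[
F(\Theta\circ\Phi)=F(\Theta)\circ F(\iota_{B,\cU})\circ\alpha=F(\Theta\circ\iota_{B,\cU})\circ\alpha=F(\iota_{B,\infty})\circ\alpha,
\]
so $\Theta\circ\Phi\colon A\to B^\infty$ realizes $\alpha$. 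The existence of $\Theta$ is the real content and is model-theoretic rather than combinatorial: by Lemma~\ref{L.transfer} and Proposition~\ref{P.elementary}, the two-sorted structure $(B^\infty,B^\cU)$ is countably saturated and elementarily equivalent to an ultrapower of $(KB,B)$ expanded by the evaluation map $\pi_0$ and its right inverse $\iota_{B,K}$; a $\sigma$-complete back-and-forth system between these saturated structures then transfers $\pi_0$ and $\iota_{B,K}$ to partial maps between separable substructures, commuting with the diagonals. This is why no P-point, no Continuum Hypothesis, and no reindexing combinatorics are needed for Theorem~\ref{T.AaCh}, even though a global right inverse to $\pi_\cU$ (Theorems~\ref{T.A} and~\ref{T.split.P-point}) does require them.
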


Theorem~\ref{T.AaCh} and our other results are stated and proved for  structures in any countable language (discrete or metric). 
 This theorem is really a corollary of a more fundamental result, Theorem~\ref{T.5.1}. As the latter result's statement is unsuitable for inclusion in a short introduction, instead we state its consequence that uses the Continuum Hypothesis. 
  
 For a nonprincipal ultrafilter $\cU$ on $\bbN$, by $\pi_\cU\colon B^\infty \to B^\cU$ we denote the quotient map. A \emph{right inverse} to $\pi_\cU$ is a homomorphism $\Theta\colon B^\cU\to B^\infty$ such that $\pi_\cU\circ \Theta_\cU=\id_{B^\cU}$.

\begin{thm}  \label{T.A}
The Continuum Hypothesis implies that there exists a nonprincipal ultrafilter $\cU$ on $\bbN$ such that for every separable metric structure~$B$ in a separable language the quotient map $\pi_\cU\colon B^\infty\to B^\cU$ has a right inverse.   
\end{thm}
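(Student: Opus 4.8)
The plan is to fix, under the Continuum Hypothesis, a single nonprincipal ultrafilter $\cU$ with strong combinatorial properties---concretely a selective (Ramsey) ultrafilter, which in particular is a P-point---and to construct the right inverse $\Theta\colon B^\cU\to B^\infty$ for each separable $B$ by a transfinite recursion of length $\omega_1$. The two structural inputs are that $B^\infty=\prod_{\Fin}B$ is countably saturated (a standard fact for reduced powers over the Fr\'echet ideal, so that countable types over separable parameter sets that are finitely satisfiable are realized) and that, under CH, $|B^\cU|=\aleph_1$. I would enumerate $B^\cU=\{x_\alpha:\alpha<\omega_1\}$, let $M_\alpha$ be the separable substructure generated by $\{x_\beta:\beta<\alpha\}$, and build an increasing chain of homomorphisms $\Theta_\alpha\colon M_\alpha\to B^\infty$ with $\pi_\cU\circ\Theta_\alpha=\id_{M_\alpha}$; the union, together with its metric closure, yields $\Theta$. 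Note that any such $\Theta$ is forced to be \emph{isometric}: since both $\pi_\cU$ and any homomorphism are distance nonincreasing, the identity $\pi_\cU\circ\Theta=\id$ collapses the two inequalities, so the recursion must realize the $\cU$-distances of $B^\cU$ exactly in the $\limsup$-metric of $B^\infty$.

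The recursion step is the crux. Given $\Theta_\alpha$ on $M_\alpha$, a new element $x_\alpha$ represented by a sequence $(a_i)$, and each parameter $m\in M_\alpha$ lifted to $\Theta_\alpha(m)=[(\mu_i^m)]_{\Fin}\in B^\infty$, I must produce $y=[(b_i)]_{\Fin}$ such that (i) $\lim_{i\to\cU}d(b_i,a_i)=0$, so that $\pi_\cU(y)=x_\alpha$, and (ii) $\limsup_i d(b_i,\mu_i^m)=\lim_{i\to\cU}d(a_i,\mu_i^m)$ for every $m$ (and likewise for the remaining atomic data of the language), so that $\Theta_{\alpha+1}$ remains an isometric homomorphism. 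The difficulty is that (i) refers to $\cU$-limits while (ii) refers to $\limsup$ over the Fr\'echet filter, and these two notions of convergence are mutually invisible in the language of $B^\infty$; in particular the constraint $\pi_\cU(y)=x_\alpha$ is not expressible by a type in $B^\infty$, so countable saturation alone cannot deliver it. This is exactly the point at which the set-theoretic hypothesis must enter.

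To reconcile (i) and (ii) I would first use the P-point property to pass to a single set $A\in\cU$ (a pseudo-intersection of the relevant $\cU$-sets) on which all of the countably many distance sequences $i\mapsto d(a_i,\mu_i^m)$ converge to their $\cU$-limits $s_m$, and on which the finite approximate types witnessed by $x_\alpha$ in $B^\cU$ are already realized coordinatewise in $B$. Setting $b_i=a_i$ for $i\in A$ then secures (i) and drives the distances to $s_m$ along $A$; the delicate task is to choose $b_i$ for $i\notin A$ without inflating the $\limsup$ past $s_m$, given that the $\cU$-null complement of $A$ may be infinite and need not realize the type. This is where selectivity is meant to be used, to thin and re-enumerate the index set so that the complement of $A$ does not contaminate the $\limsup$ while the pushforward ultrafilter is controlled and the projection in (i) survives. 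Equivalently, and more cleanly to state, one may package the whole step as the claim that for such $\cU$ the two-sorted structure $(B^\infty,\pi_\cU,B^\cU)$ is countably saturated; granting this, the recursion step is a single appeal to saturation realizing the combined type (i)+(ii), and $\Theta$ is produced by the standard CH union argument.

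The main obstacle is therefore precisely this coordinate-level reconciliation---matching the $\limsup$-metric of the reduced power with the $\cU$-metric of the ultrapower along \emph{one} representing sequence---which is what forces a strong (selective) ultrafilter and hence CH, consistently with the conclusion's independence from ZFC. Uniformity over all separable $B$, by contrast, should come for free: the combinatorial facts invoked about $\cU$ concern only sequences of reals and pseudo-intersections of its members, with no reference to the particular structure $B$, so the single ultrafilter fixed at the outset serves simultaneously for every separable metric structure in a separable language.
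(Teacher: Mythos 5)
Your high-level diagnosis is correct on two points: the right ultrafilter hypothesis is that $\cU$ be a P-point (selectivity is never needed), and the key technical claim is countable saturation of the three-sorted structure $(B^\infty,B^\cU,\pi_\cU)$ --- this is exactly Theorem~\ref{T.CS} of the paper, which under CH makes that structure saturated of density character $\aleph_1$. But your recursion fails as stated, and not for a technical reason. Your inductive hypothesis is that $\Theta_\alpha\colon M_\alpha\to B^\infty$ is an (isometric, atomic-value-preserving) homomorphism with $\pi_\cU\circ\Theta_\alpha=\id_{M_\alpha}$, and you claim the extension type (i)+(ii) for a new $x_\alpha$ is then realized ``by a single appeal to saturation.'' Saturation only realizes \emph{satisfiable} types, and (i)+(ii) can be outright unsatisfiable if earlier lifts were chosen badly. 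Concretely: let $B=\{p,q_1,q_2,q_1',q_2'\}$ with $d(p,q_1)=d(p,q_2)=1$ and all other distances equal to $2$; let $\sfU_0\in\cU$ be coinfinite, $m^1=[(q_1,q_1,\dots)]_\cU$, $m^2=[(q_2,q_2,\dots)]_\cU$, and put $\Theta_\alpha(m^j)=c^j$ where $c^1_i=q_1,\ c^2_i=q_2$ for $i\in\sfU_0$ and $c^1_i=q_1',\ c^2_i=q_2'$ for $i\notin\sfU_0$. This is a legitimate isometric partial right inverse (the distance $2$ is preserved at every coordinate). Now take $x_\alpha=[(p,p,\dots)]_\cU$, so $d^{B^\cU}(x_\alpha,m^j)=1$ for $j=1,2$. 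For $i\notin \sfU_0$ the only point of $B$ within distance $<2$ of $q_1'$ is $q_1'$ itself, and likewise for $q_2'$; so for every $y\in B^\infty$ and every $\e<1$, infinitely many coordinates violate $\max_j d(y_i,c^j_i)\le 1+\e$, and the type is unsatisfiable --- for \emph{any} nonprincipal $\cU$, P-point or not. Thus ``homomorphism plus right inverse'' is too weak an inductive hypothesis, and the coordinatewise repair you sketch (adjusting $b_i$ off a pseudo-intersection) cannot help, because the obstruction sits in the previously chosen lifts $\mu^m_i$, not in the new element.

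What is missing is a device forcing all partial lifts to be chosen coherently, and this is where the paper's route is genuinely different and necessary: it produces a \emph{separable template} already carrying a global right inverse, namely $(KB,B,\pi_0,\iota_{B,K})$ with $KB=C(K,B)$ ($K$ the Cantor set), $\pi_0$ evaluation at a point, and $\iota_{B,K}$ the diagonal embedding (Lemma~\ref{L.transfer}), and it proves that $(KB,B,\pi_0)$ embeds \emph{elementarily} into $(B^\infty,B^\cU,\pi_\cU)$ (Proposition~\ref{P.elementary}, itself a nontrivial argument). Under CH both $(KB,B,\pi_0,\iota_{B,K})^{\cV}$ and $(B^\infty,B^\cU,\pi_\cU)$ are saturated of density character $\aleph_1$ and elementarily equivalent, hence isomorphic over the template, and \L o\'s's Theorem transports the right inverse $\iota_{B,K}^{\cV}$ of $\pi_0^{\cV}$ to a right inverse of $\pi_\cU$. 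Your transfinite recursion is, in effect, the back-and-forth proof of this isomorphism theorem, but it must be run with partial \emph{elementary} maps anchored to that template --- precisely the strengthening of the inductive hypothesis your version lacks. Separately, the saturation claim you propose to ``grant'' is a real theorem (Theorem~\ref{T.CS}) whose proof requires the Feferman--Vaught-type analysis of \S\ref{S.FV} together with quantifier elimination for atomless Boolean algebras with a distinguished ultrafilter (Proposition~\ref{P.QE}), not just pseudo-intersection arguments, so that step too would need to be supplied.
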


If the Continuum Hypothesis holds, then Theorem~\ref{T.AaCh} is a consequence of  Theorem~\ref{T.A}. Fortunately, the contentious issue of the true cardinality of the continuum (and a possibly even more contentious issue whether there is such a thing as a true cardinality of the continuum) can be ignored in the present context because Theorem~\ref{T.5.1} suffices for all practical purposes. Suffice it to say that  a metamathematical detour involving the Continuum Hypothesis was instrumental in finding a ZFC-result to which the (very interesting, in the author's opinion) question of the cardinality of the continuum is  completely irrelevant.   

In the case when $B$ belongs to an abelian category, the conclusion of Theorem~\ref{T.A} asserts that  the exact sequence (with $c_\cU(B)=\ker(\pi_\cU)$)
\[
0\to c_\cU(B)\to B^\infty\overset {\pi_\cU}\to  B^\cU\to 0
\]
splits. 
 Unlike $\pi_\cU$, its right inverse  is not canonical and does not necessarily exist. 
A more precise version of Theorem~\ref{T.A} involves one of the most common types of special ultrafilters on $\bbN$ (see Definition~\ref{Def.P-point}). 

\begin{thm} \label{T.split.P-point} 
Suppose that the Continuum Hypothesis holds. For a nonprincipal ultrafilter $\cU$ on $\bbN$ 
	the following are equivalent. 
	\begin{enumerate}
	\item \label{1.T.split.P-point}  For every separable metric structure $B$ in a countable language the quotient map $\pi_\cU\colon B^\infty\to B^\cU$ has a right inverse. 

\item \label{2.T.split.P-point}  $\cU$ is a P-point. 
	\end{enumerate}
\end{thm}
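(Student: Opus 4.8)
The plan is to prove the equivalence in two directions, with the Continuum Hypothesis used only to construct the right inverse in the direction $(2)\Rightarrow(1)$.

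\medskip

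\noindent\textbf{The easy direction $(1)\Rightarrow(2)$.} First I would show that if $\cU$ is not a P-point, then some quotient map $\pi_\cU$ fails to have a right inverse. Recall that $\cU$ is a P-point if every function on $\bbN$ is either constant or finite-to-one on a set in $\cU$; equivalently, every sequence of sets in $\cU$ has a pseudo-intersection in $\cU$. If $\cU$ is not a P-point, fix a partition $\bbN=\bigsqcup_n I_n$ into finite intervals such that no single $I_n$ lies in $\cU$ yet no union of finitely many $I_n$'s is in $\cU$ either (this is the standard combinatorial witness to the failure of the P-point property). The idea is to choose a structure $B$ rich enough to encode this partition as an internal object of $B^\infty$ that becomes ``blurred'' in $B^\cU$ in a way no section can undo. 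Concretely, I would take $B$ to be a structure carrying a distinguished sequence of elements $(b_n)$ that are pairwise at distance $1$ (or, in the discrete case, distinct constants), so that in $B^\infty$ the element $\beta$ whose representing sequence is constantly $b_n$ on each block $I_n$ is well defined, whereas its image $\pi_\cU(\beta)$ in $B^\cU$ depends only on the $\cU$-germ. The failure of the P-point property should translate into the statement that $\pi_\cU(\beta)$ has many preimages no two of which agree along $\cU$, and a coherent choice of preimage for all such $\beta$ simultaneously (which a right inverse must provide) forces a selection of one block from each $I_n$ that is measurable by $\cU$ in a way the non-P-point partition forbids. The expected payoff is a contradiction with homomorphism-invariance of the putative $\Theta$, so no right inverse can exist.

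\medskip

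\noindent\textbf{The main direction $(2)\Rightarrow(1)$.} Assume the Continuum Hypothesis and that $\cU$ is a P-point. Here I would invoke Theorem~\ref{T.A} in spirit but sharpen its construction to use the P-point hypothesis directly. The right inverse $\Theta\colon B^\cU\to B^\infty$ must send each $\cU$-germ to a genuine sequence representing it; the obstruction is that a germ only prescribes the behavior along a set in $\cU$, and $\Theta$ must fill in the complementary coordinates coherently across all germs while respecting the algebraic operations. The strategy is a transfinite construction of length $\omega_1$ (using CH to enumerate $B^\cU$, which has cardinality $\leq 2^{\aleph_0}=\aleph_1$) that builds $\Theta$ on an increasing chain of countable substructures, at each step extending the partial section to a new germ. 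The P-point property enters precisely at the limit/extension steps: given countably many constraints from previously handled germs, each supported on a set in $\cU$, the P-point property furnishes a single set $U\in\cU$ that is almost contained in all of them, along which all the prior choices cohere. This lets me define $\Theta$ of the new germ consistently with finitely many algebraic relations to the old ones on a common set in $\cU$, and then pad arbitrarily off $U$. Saturation of $B^\infty$ relative to these countable types (again a reduced-power saturation property, available since the language is countable) guarantees that the required padded element actually exists in $B^\infty$.

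\medskip

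\noindent\textbf{The main obstacle.} I expect the hard part to be verifying that the transfinite construction in $(2)\Rightarrow(1)$ produces a genuine \emph{homomorphism} and not merely a set-theoretic section: the coherence furnished by the P-point property must be strong enough to preserve \emph{all} the function and relation symbols of the language simultaneously at every stage, and the approximations (in the metric case) must be controlled uniformly so that the limit map is $1$-Lipschitz and respects the interpretations of the operations exactly. Managing the $\e$-bookkeeping for the metric operations while keeping the supporting sets in $\cU$ — so that the pseudo-intersection supplied by the P-point property remains in $\cU$ after countably many refinements — is where the real work lies. The discrete case should be a clean special case of this, obtained by taking all the tolerances to be $0$.
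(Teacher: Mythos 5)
Both directions of your proposal contain genuine gaps. In the direction \eqref{1.T.split.P-point}$\Rightarrow$\eqref{2.T.split.P-point}, your combinatorial witness is wrong: a partition of $\bbN$ into \emph{finite} intervals can never witness the failure of the P-point property, since every $X\in\cU$ automatically meets each finite block in a finite set; the properties you list (no block in $\cU$, no finite union of blocks in $\cU$) hold for every partition into finite sets under every nonprincipal ultrafilter. The correct witness is a sequence $X_n\in\cU$ with no pseudo-intersection in $\cU$ (equivalently, a partition into $\cU$-small, necessarily not all finite, pieces such that every $X\in \cU$ meets some piece infinitely). More seriously, your choice of structure $B$ --- pairwise distinct constants and nothing else --- cannot produce any obstruction: in a language with no function symbols, \emph{any} set-theoretic selector fixing the constants is a homomorphism, so $\pi_\cU$ has a right inverse for every $\cU$ (the paper makes exactly this remark after the proof of Theorem~\ref{T.A}). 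The obstruction must come from preservation of a function symbol: the paper takes $B=(\bbN,\min)$ and an element $b$ whose representing sequence climbs through the sets $X_n$, so that $\pi_\cU(b)$ dominates every diagonal constant in $B^\cU$; a right inverse $\Theta$ must then produce a lift $c$ of $\pi_\cU(b)$ dominating the constants mod finite, which is incompatible with $c$ agreeing with $b$ on a set in $\cU$ and the choice of the $X_n$.

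In the direction \eqref{2.T.split.P-point}$\Rightarrow$\eqref{1.T.split.P-point}, the load-bearing step of your recursion is exactly the part you treat as known. At each stage you must realize a countable type containing the condition $\pi_\cU(x)=d$ together with countably many (mod-finite, not mod-$\cU$) quantifier-free conditions tying $x$ to the previously chosen lifts; this is a type over the two-sorted structure $(B^\infty,B^\cU,\pi_\cU)$, not over $B^\infty$ alone, since $\pi_\cU(x)=d$ is not expressible in $B^\infty$. Countable saturation of $(B^\infty,B^\cU,\pi_\cU)$ is precisely the paper's main technical result (Theorem~\ref{T.CS}), proved via the Feferman--Vaught-type analysis of \S\ref{S.FV}, quantifier elimination for atomless Boolean algebras with an ultrafilter predicate, and a delicate combinatorial recursion in which P-pointness is used; it cannot be a routine ``reduced-power saturation property,'' because it \emph{fails} whenever $\cU$ is not a P-point. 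Relatedly, ``pad arbitrarily off $U$'' breaks the construction: the homomorphism constraints relative to earlier germs must hold on cofinite sets, so the padding is itself constrained by countably many conditions --- again a saturation problem for the pair. Finally, to keep the extension types consistent at every stage you must maintain partial \emph{elementarity} in $(B^\infty,B^\cU,\pi_\cU)$, not merely the partial homomorphism property. The paper sidesteps the recursion entirely: it exhibits $(KB,B,\pi_0)$ (constant functions into $C(K,B)$ with evaluation at a point, Lemma~\ref{L.transfer}) as an elementary substructure of $(B^\infty,B^\cU,\pi_\cU)$ (Proposition~\ref{P.elementary}), uses Theorem~\ref{T.CS} plus CH to conclude that the latter structure is saturated, and then invokes uniqueness of saturated models and \L o\'s's Theorem to transfer the splitting $\iota_{B,K}$ from the small structure to $(B^\infty,B^\cU,\pi_\cU)$. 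Filling in your recursion honestly would amount to reproving Theorem~\ref{T.CS}.
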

  
  The salient feature  of  both $ \pi_\cU$ and its right inverse, that each one of them is equal to the identity on the diagonal copies of $B$ in $B^\infty$ and $B^\cU$, is essential for the applications. It is preserved in  the following poor man's version of Theorem~\ref{T.A} that applies to all nonprincipal ultrafilters on $\bbN$.  

\begin{thm} 
\label{T.transfer} 
Suppose that the Continuum Hypothesis holds, $\cU$ is an ultrafilter on $\bbN$,  and $B$ is a  separable metric structure. Then there exists a surjective homomorphism $\Phi_\cU\colon B^\infty \to B^\cU$ that is equal to the identity on the diagonal copy of $B$ in $B^\infty$ and has a right inverse. 
\end{thm}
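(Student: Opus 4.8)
The plan is to sidestep the arbitrary ultrafilter $\cU$ altogether and reduce to the splitting already provided by Theorem~\ref{T.A}. The structural input that makes this work is that, under the Continuum Hypothesis, \emph{all} ultrapowers of $B$ over nonprincipal ultrafilters on $\bbN$ are isomorphic by an isomorphism fixing the diagonal copy of $B$. Granting this, the construction of $\Phi_\cU$ becomes purely formal: I transport the splitting from a \emph{good} ultrafilter $\cV$ across such an isomorphism onto $\cU$.

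First I would invoke Theorem~\ref{T.A} to fix one nonprincipal ultrafilter $\cV$ on $\bbN$ for which the quotient map $\pi_\cV\colon B^\infty\to B^\cV$ admits a right inverse $\Theta_\cV\colon B^\cV\to B^\infty$; as emphasized after Theorem~\ref{T.A}, both $\pi_\cV$ and $\Theta_\cV$ restrict to the identity on the diagonal copies of $B$. (Such $\cV$ is a P-point by Theorem~\ref{T.split.P-point}, but I will not need this.)

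The heart of the argument is to produce an isomorphism $\sigma\colon B^\cV\to B^\cU$ that is the identity on the diagonal copy of $B$. Here I would use three standard facts: (i) both $B^\cV$ and $B^\cU$ are elementary extensions of the diagonal $B$ (\L o\'s's Theorem), so they are elementarily equivalent over $B$; (ii) both are countably saturated, since ultrapowers associated with nonprincipal ultrafilters on the countable index set $\bbN$ are always countably saturated; and (iii) under CH both have density character at most $\aleph_1=2^{\aleph_0}$. The isomorphism $\sigma$ is then built by the usual transfinite back-and-forth of length $\omega_1$: enumerate dense subsets of $B^\cV$ and $B^\cU$, start from $\id_B$, and at each stage realize the type of the next element over the countable set already handled, using countable saturation of the opposite structure to produce a matching realization. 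Because the recursion is started from the common separable elementary submodel $B$, the resulting $\sigma$ fixes the diagonal.

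Finally I would set $\Phi_\cU:=\sigma\circ\pi_\cV$ and $\Theta_\cU:=\Theta_\cV\circ\sigma^{-1}$. Then $\Phi_\cU$ is a composition of surjective homomorphisms, hence a surjective homomorphism; it is the identity on the diagonal because both $\pi_\cV$ and $\sigma$ are; and
\[
\Phi_\cU\circ\Theta_\cU=\sigma\circ\pi_\cV\circ\Theta_\cV\circ\sigma^{-1}=\sigma\circ\id_{B^\cV}\circ\sigma^{-1}=\id_{B^\cU},
\]
so $\Theta_\cU$ is the required right inverse. The main obstacle is the isomorphism $\sigma$ in the previous paragraph: the entire reduction rests on the two ultrapowers being countably saturated of density character at most $\aleph_1$, so that the CH back-and-forth over the common separable submodel $B$ goes through. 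Once that is in place, the remaining verifications are bookkeeping.
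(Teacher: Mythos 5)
Your proposal is correct, but it takes a genuinely different route from the paper's. The paper derives Theorem~\ref{T.transfer} from Theorem~\ref{T.5.1}: a ZFC-provable $\sigma$-complete back-and-forth system of quadruples $(C,D,\pi,\Theta)$, built by transferring the ultrapower of $(KB,B,\pi_0,\iota_{B,K})$ (Lemma~\ref{L.transfer}) across partial isomorphisms between $(KB,B)^\cV$ and $(B^\infty,B^\cU)$; CH enters only at the very last step, to glue the system into a global surjection with a right inverse. Crucially, that argument needs only countable saturation of the \emph{pair} $(B^\infty,B^\cU)$, which is automatic from saturation of each component separately since no symbol connects the two sorts --- no P-points and no Theorem~\ref{T.CS} are involved. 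Your route instead invokes Theorem~\ref{T.A}, whose proof rests on the existence of P-points under CH and on the paper's hardest technical result, the countable saturation of the triple $(B^\infty,B^\cU,\pi_\cU)$ for P-points (Theorem~\ref{T.CS}), and then transports the splitting across a CH-isomorphism $\sigma\colon B^\cV\to B^\cU$ over the diagonal. This is sound and not circular (Theorem~\ref{T.A} is proved before, and independently of, Theorem~\ref{T.transfer}), and your $\sigma$ exists for the reasons you give: both ultrapowers are countably saturated elementary extensions of the diagonal copy of $B$ with matching density character under CH (both reduce to $B$ if $B$ is compact, and both have density character $2^{\aleph_0}=\aleph_1$ otherwise), so the back-and-forth starting from $\id_B$ applies. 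What the comparison buys: your argument is shorter \emph{given} Theorem~\ref{T.A}, and it is a clean illustration of the CH uniqueness of ultrapowers; but it uses far heavier machinery than the statement requires, whereas the paper's proof exposes why Theorem~\ref{T.transfer} holds for \emph{all} nonprincipal ultrafilters while Theorem~\ref{T.A} needs P-points --- the surjection $\Phi_\cU$ is not the canonical quotient map (indeed your $\Phi_\cU=\sigma\circ\pi_\cV$ factors through $B^\cV$, so in the \cstar-setting its kernel is $c_\cV(B)$, not $c_\cU(B)$) --- and it factors through the ZFC result Theorem~\ref{T.5.1}, which is what makes Theorem~\ref{T.AaCh} available without any set-theoretic hypotheses.
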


Theorem~\ref{T.5.1} is a variant of Theorem~\ref{T.transfer} proved without any additional set-theoretic axioms, yet sufficiently strong to imply Theorem~\ref{T.AaCh}.  Since our main results depend on both the choice of an ultrafilter and  the model of ZFC, it is unlikely that they, or even Theorem~\ref{T.AaCh}, could have been discovered without a nontrivial use of logic.

The following  (motivated by the observations given in  \S\ref{S.functor}) was pivotal in discovering our main results.

\begin{thm}\label{C.infty} There is a covariant functor $K$ from the category of metric structures into itself such that for every $B$ there is an embedding $\iota_{B,K}\to KB$ with the following property.   
	If the Continuum Hypothesis holds and if $B$ is a separable metric structure in a separable language, then there is an isomorphism $\Lambda\colon (KB)^\cU\to B^\infty$ such that $\iota_{B,\infty}=\Lambda\circ \iota_{B,K}$. 
\end{thm}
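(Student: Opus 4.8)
The plan is to let $K$ be the reduced-power functor itself: set $KB:=B^\infty=\prod_{\Fin}B$ and let $\iota_{B,K}\colon B\to B^\infty$ be the diagonal embedding. Covariance is immediate, since a morphism $f\colon B\to B'$ acts coordinatewise on representing sequences and carries the Fr\'echet ideal into itself, inducing $K(f)\colon B^\infty\to (B')^\infty$. Any functor with $KB\equiv B^\infty$ would serve, but this is the canonical choice. With it we have $(KB)^\cU=(B^\infty)^\cU$, and the theorem reduces to producing, under CH, an isomorphism $\Lambda\colon (B^\infty)^\cU\to B^\infty$ that is the identity on the diagonal copy of $B$; reading $\iota_{B,K}$ as the composite of $B\to B^\infty$ with the diagonal $B^\infty\to (B^\infty)^\cU$, this yields exactly $\iota_{B,\infty}=\Lambda\circ\iota_{B,K}$.

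I would obtain $\Lambda$ from the uniqueness of saturated models. The inputs are: (i) $B^\infty$ is countably saturated; (ii) $(B^\infty)^\cU$ is countably saturated, which is automatic for any ultrapower over a nonprincipal ultrafilter on $\bbN$; (iii) $B^\infty\prec (B^\infty)^\cU$ via the diagonal, by \L o\'s's theorem, so the two structures are elementarily equivalent; and (iv) both have density character $\aleph_1$. For (iv), $B^\infty$ has density character $\mathfrak c$ because $B$ and the language are separable, and this equals $\aleph_1$ under CH; the ultrapower then has density character at most $\mathfrak c^{\aleph_0}=\mathfrak c=\aleph_1$ and at least $\aleph_1$ since it contains the diagonal copy of $B^\infty$. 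A countably saturated metric structure of density character $\aleph_1$ is saturated in its density character, so under CH conditions (i)--(iv) force $B^\infty$ and $(B^\infty)^\cU$ to be isomorphic.

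To make $\Lambda$ fix the diagonal $B$, I would run the back-and-forth starting from the partial map that sends the image of each tuple from $B$ under the double diagonal $B\to(B^\infty)^\cU$ to its image under the single diagonal $B\to B^\infty$. This initial map is elementary: since $B^\infty\prec (B^\infty)^\cU$, the type in $(B^\infty)^\cU$ of the double-diagonal image of a tuple from $B$ agrees with the type in $B^\infty$ of its single-diagonal image. Enumerating dense subsets of the two sides in order type $\omega_1$ and extending this map by alternately realizing the relevant types---each over a parameter set of size less than $\aleph_1$, hence realized by saturation---produces an isometric isomorphism $\Lambda$ restricting to the identity on the diagonal $B$, as required.

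The main obstacle is input (i), the countable saturation of the reduced power $B^\infty$ over the Fr\'echet ideal. This cannot be read off from \L o\'s's theorem and instead requires a diagonalization adapted to the ideal: given a countable type over parameters represented by sequences, finite satisfiability in the quotient lets one approximately realize increasingly long finite subtypes on cofinal tails of $\bbN$, and a suitably chosen diagonal sequence realizes the whole type modulo the Fr\'echet ideal. In the metric setting this comes with the usual bookkeeping of $\varepsilon$-approximate conditions and uniform-continuity moduli; if the saturation of $B^\infty$ is already available from an earlier section it may simply be invoked. The remaining points---the density computation and the back-and-forth---are then routine.
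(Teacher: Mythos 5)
Your argument is internally sound, and it does establish the statement exactly as worded: with $KB:=B^\infty$, the ingredients you list (countable saturation of $B^\infty$, countable saturation of ultrapowers of arbitrary structures, \L o\'s's theorem for the diagonal embedding, the density computation under CH, elementarity of the partial map matching the two diagonal copies of $B$, and the back-and-forth between saturated structures of density character $\aleph_1$) are all correct and assemble as you describe. The problem is that the choice $KB:=B^\infty$ exploits an omission in the statement rather than proving the intended theorem. The point of Theorem~\ref{C.infty} --- explicit in the Question of Section~\ref{S.functor} that it answers (``a functor $K$ from the category of \emph{separable} $\calL$-structures into itself'') and in the introduction's gloss (``a canonical construction of a \emph{separable} substructure $KB$ of $B^\infty$'') --- is that $K$ carries separable structures to separable structures, so that under CH the non-canonical, non-separable $B^\infty$ is exhibited as an ultrapower of a canonical separable structure. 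Your functor has $KB$ of density character $2^{\aleph_0}$, and with it the conclusion collapses to the observation that motivates the Question in the first place: under CH, the countably saturated structure $B^\infty$ of density character $\aleph_1$ is isomorphic to an ultrapower --- of any of its separable elementary submodels, and, as you note, of itself. Nothing in your argument produces a separable $KB$, and your $K$ could not play the role that $K$ plays elsewhere in the paper (for instance, the proofs of Theorems~\ref{T.split.P-point} and~\ref{T.5.1} use the separability of $KB$ essentially).

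The paper instead takes $KB:=C(K,B)$, the continuous functions from the Cantor space into $B$ (separable whenever $B$ is), with $\iota_{B,K}$ the embedding as constant functions, and the real work is Proposition~\ref{P.elementary}: the natural embedding $\Psi\colon KB\to B^\infty$ with $\iota_{B,\infty}=\Psi\circ\iota_{B,K}$ is \emph{elementary}. That proposition rests on the Feferman--Vaught-type analysis of Section~\ref{S.FV} together with a permutation-automorphism argument, and it is the step for which your proposal has no analogue. Once it is available, the paper's concluding step is the same as yours: under CH both $(KB)^\cU$ and $B^\infty$ are saturated of density character $\aleph_1=2^{\aleph_0}$, so the elementary map $\Psi$ extends to an isomorphism $\Lambda\colon (KB)^\cU\to B^\infty$ commuting with the diagonal embeddings of $B$. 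In short, the saturation and back-and-forth half of your argument matches the paper's; what is missing is the construction of a separability-preserving functor and the elementarity of its embedding into $B^\infty$, which is the mathematical content of the theorem.
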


A proof of Theorem~\ref{C.infty} is given at the end of \S\ref{S.functor}.

In the case of \cstar-algebras,  the list of equivalences in Theorem~\ref{T.split.P-point} can be expanded by adding analogs to an influential result due to Sato and Kirchberg--R\o rdam (see \cite[Theorem~3.3]{KirRo:Central}, \cite[Lemma~2.1]{sato2011discrete}). If $B$ is a \cstar-subalgebra of a \cstar-algebra $C$, then the \emph{relative commutant} of $B$ inside $C$ is 
\[
C\cap B'=\{c\in C\mid bc=cb\text{ for all }b\in B\}. 
\] 
Following Kirchberg, we say that an ideal $J$ in a \cstar-algebra is a  \emph{$\sigma$-ideal} if  for every countable subset $J_0$ of $J$ there exists a positive contraction $e\in J$ such that $ea=a$ for all $a\in J_0$ (\cite{Kirc:Central},  \cite{kirchberg2014central}). 

\begin{thm} 
\label{T.C*-P-point} 	
For a P-point $\cU$ on $\bbN$ and a separable \cstar-algebra $B$ the following statements hold.\footnote{Operator algebraists should take a note that in this theorem, and elsewhere in this note,  $B^\cU$ denotes the ultrapower of a metric structure, in particular the norm ultrapower of $B$ if $B$ is a \cstar-algebra, and not  the tracial ultrapower (except in  Example~\ref{Ex.Rinfty}, where the metric structure is the hyperfinite II$_1$ factor). This choice of notation was made for consistency, and it hopefully does not lead to confusion or alienation.}   
\begin{enumerate}
\item\label{1.T.C*} $\pi_\cU[B^\infty\cap B']=B^\cU\cap \pi_\cU[B]'$. 	
\item \label{2.T.C*} Every separable \cstar-subalgebra $A$ of $B^\infty$ satisfies 
\[
\pi_\cU[B^\infty\cap A']=B^\cU\cap \pi_\cU[A]'.
\] 
\item\label{4.T.C*} The kernel of $\pi_\cU$, $c_\cU(B)$, is a $\sigma$-ideal in $B^\infty$. 
\pushcounter
\end{enumerate}
If $B$ is a UHF algebra,  the Continuum Hypothesis holds, and $\cU$ is any nonprincipal ultrafilter on $\bbN$, then \eqref{1.T.C*} 
is equivalent to each of the following.  
\begin{enumerate}
\popcounter
	\item\label{3.T.C*} $\cU$ is a P-point. 
	\item \label{5.T.C*} The exact sequence $0\to c_\cU(B)\to B^\infty \to B^\cU\to 0$ splits. 
\end{enumerate}
\end{thm}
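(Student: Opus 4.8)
The plan is to separate the unconditional part—statements (1), (2), (4) for every P-point $\cU$ and every separable \cstar-algebra $B$—from the UHF/CH part, and to route everything through (4). The bridge from (4) to (1) and (2) is Kirchberg's lemma on $\sigma$-ideals (\cite{Kirc:Central,kirchberg2014central}): if $J$ is a $\sigma$-ideal of a \cstar-algebra $D$ with quotient $\pi\colon D\to D/J$ and $A\subseteq D$ is separable, then $\pi[D\cap A']=\pi[D]\cap\pi[A]'$. Granting (4), I apply this with $D=B^\infty$, $J=c_\cU(B)=\ker\pi_\cU$ and $A$ the diagonal copy of $B$ to get (1), and with an arbitrary separable $A\subseteq B^\infty$ to get (2); in both cases $\pi_\cU[D]=B^\cU$. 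Only the inclusion $\supseteq$ needs work: a $y\in B^\cU$ commuting with $\pi_\cU[A]$ lifts to some $d\in B^\infty$ with $[d,a]\in c_\cU(B)$ for all $a$ in a countable dense set, and the $\sigma$-ideal supplies a quasicentral absorber that corrects $d$ to a genuine element of $B^\infty\cap A'$ with the same image $y$.

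Next I would prove (4). Fix a P-point $\cU$ and a countable set $\{x_n\}\subseteq c_\cU(B)$, with $x_n$ represented by $(a_{n,i})_i$, so $\lim_{i\to\cU}\|a_{n,i}\|=0$. Put $T_n=\{i:\|a_{m,i}\|<1/n\text{ for all }m\le n\}$; each $T_n\in\cU$ and the tower is decreasing, so by the defining property of a P-point (Definition~\ref{Def.P-point}) there is $A\in\cU$ almost contained in every $T_n$. For $i\in A$ set $e_i=0$; almost-containment forces $\|a_{n,i}\|\to0$ as $i\to\infty$ within $A$ for each fixed $n$. For $i\notin A$ let $e_i$ be a positive contraction from an approximate unit of $B$ that absorbs $a_{1,i},\dots,a_{k_i,i}$ to within $1/k_i$, with $k_i\to\infty$. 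Then $e=(e_i)+c_0(B)$ is a positive contraction, $\lim_{i\to\cU}\|e_i\|=0$ since $e$ vanishes on $A\in\cU$, so $e\in c_\cU(B)$, and $\|e_ia_{n,i}-a_{n,i}\|\to0$ for each $n$, i.e. $ex_n=x_n$; hence $c_\cU(B)$ is a $\sigma$-ideal. Run in reverse, the same bookkeeping shows that if $\cU$ is \emph{not} a P-point then (4) fails: fix a decreasing tower $(A_n)$ in $\cU$ with no pseudo-intersection in $\cU$ and a norm-one projection $p$, and let $x_n$ be represented by $a_{n,i}=p$ for $i\notin A_n$ and $0$ for $i\in A_n$; any absorber $e$ is small along some set of $\cU$, and that set is then almost contained in every $A_n$, a contradiction. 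Thus (3) and (4) are equivalent for every nonzero $B$.

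For the UHF/CH part I would establish $(3)\Rightarrow(1)$, $(3)\Rightarrow(5)$ and the two converses $(1)\Rightarrow(3)$ and $(5)\Rightarrow(3)$, using $(3)\Leftrightarrow(4)$ from above. The arrow $(3)\Rightarrow(1)$ is the unconditional part. For $(3)\Rightarrow(5)$, Theorem~\ref{T.split.P-point} (which uses CH) gives, for a P-point, a right inverse $\Theta\colon B^\cU\to B^\infty$ of $\pi_\cU$ that is the identity on the diagonal copy of $B$, which is exactly a splitting of $0\to c_\cU(B)\to B^\infty\to B^\cU\to0$. The two converses are separations that must exploit the UHF structure. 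Assuming $\cU$ is not a P-point, fix again a tower $(A_n)$ in $\cU$ with no pseudo-intersection. Writing $B\cong\bigotimes_kM_2\cong B\otimes B$, I would encode the tower as a commuting family of central projections in $B^\cU\cap\pi_\cU[B]'$, arranged so that any simultaneous lift to the central sequence algebra $B^\infty\cap B'$ forces a set in $\cU$ almost contained in every $A_n$; its absence defeats (1). For (5), a splitting $\Theta$ would provide coherent, multiplicative lifts of these same projections, from which one again reads off a pseudo-intersection of $(A_n)$ in $\cU$; so (5) fails too.

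The main obstacle is exactly this UHF-specific separation: converting the purely combinatorial failure of the P-point property into an honest obstruction inside $B^\cU\cap\pi_\cU[B]'$ and $B^\infty\cap B'$. The naive encoding fails because anything represented by a sequence vanishing on a set of $\cU$ has zero image in $B^\cU$, so the central projections must be built to carry the tower information while keeping prescribed nonzero images. Under CH both $B^\cU$ and $B^\infty$ are countably saturated of density character $\aleph_1$, and I expect to run the construction and its verification by transfinite back-and-forth, as in the proof of Theorem~\ref{T.split.P-point}; keeping the invariant ``no pseudo-intersection of $(A_n)$'' alive through the recursion, simultaneously against every candidate central lift and every candidate splitting, is the crux.
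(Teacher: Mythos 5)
Your handling of the unconditional part (statements (1), (2), (4)) is essentially correct but takes a genuinely different route from the paper. The paper derives all three statements at once from countable saturation of the three-sorted structure $(B^\infty,B^\cU,\pi_\cU)$ (Theorem~\ref{T.CS}): each statement says that a certain countable satisfiable type is realized. You instead prove (4) by a direct combinatorial P-point argument (which is correct: the sets $T_n$ are decreasing members of $\cU$, a pseudo-intersection $A$ lets you put $e_i=0$ on $A$ and an approximate-unit element off $A$), and then feed (4) into Kirchberg's commutant lemma to obtain (1) and (2). This is legitimate and arguably more elementary, but there is a definitional mismatch you must address: the lemma $\pi[D\cap A']=\pi[D]\cap\pi[A]'$ requires Kirchberg's notion of $\sigma$-ideal, in which the absorbing positive contraction can be chosen in $J\cap C'$ for any prescribed separable $C\subseteq D$; the paper's Definition of $\sigma$-ideal (which is what statement (4) asserts, and what your construction as written produces) only gives absorption $ea=a$, with no commutation. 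Your construction does yield the stronger property if the $e_i$ are drawn from a quasicentral approximate unit, approximately commuting with finitely many prescribed elements of $B$ in addition to absorbing the $a_{m,i}$; without saying this, the bridge from (4) to (1) and (2) does not go through. Your observation that failure of the P-point property destroys (4), so that (3) and (4) are equivalent, is correct and is a bonus not stated in the paper.

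The genuine gap is in the UHF/CH equivalences, and you flag it yourself as ``the crux'': you have no proof of (1)$\Rightarrow$(3), and the plan you sketch cannot work as stated. An obstruction consisting of a countable commuting family of projections in $B^\cU\cap\pi_\cU[B]'$ admitting no \emph{simultaneous} lift to $B^\infty\cap B'$ does not contradict (1): statement (1) asserts only that each \emph{single} element of $B^\cU\cap\pi_\cU[B]'$ lifts, and it is perfectly compatible with (1) that a countable family has no common central lift. The obstruction must be carried by one element, and this is exactly what the UHF structure provides, with no CH and no back-and-forth. Writing $B=\bigotimes_{\bbN}M_2(\bbC)$, take $u_j$ to be $\sigma_x$ in the $j$-th tensor factor and $v_n$ to be $\sigma_z^{\otimes n}$ in the first $n$ factors, so that $\lim_j\|[a,u_j]\|=0$ for every $a\in B$ while $\|[u_j,v_n]\|=2$ whenever $j\le n$. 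Given a decreasing sequence $X_n\in\cU$ witnessing that $\cU$ is not a P-point, the single element $c\in B^\cU$ with representing sequence $c_j=u_n$ for $j\in X_n\setminus X_{n+1}$ lies in $B^\cU\cap\pi_\cU[B]'$; but if $b\in B^\infty$ is any lift of $c$, then $X=\{j:\|b_j-c_j\|<1/4\}\in\cU$, and choosing $n$ with $X\setminus X_n$ infinite one gets $\limsup_j\|[v_n,b_j]\|\ge 2-2\cdot\tfrac14>0$, so $b\notin B^\infty\cap B'$. This single-element construction is the missing idea; it is a ZFC argument, so your proposed transfinite back-and-forth under CH is both unnecessary and misdirected for this implication (CH enters only in (3)$\Rightarrow$(5), via Theorem~\ref{T.split.P-point}, exactly as you say). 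Finally, your separate tower argument for defeating (5) has the same family-versus-single-element defect; the paper instead closes the cycle with (5)$\Rightarrow$(1) (immediate for splittings compatible with the diagonal copy of $B$) followed by (1)$\Rightarrow$(3).
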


Can any of  theorems~\ref{T.A}--\ref{T.C*-P-point} 	 be proven without appealing to an additional set-theoretic assumption? This depends on the theory of the structures in question (see Example~\ref{Ex.Z/2Z}).  In \cite[Theorem~D]{farah2020between} it was proven that in a forcing extension of the set-theoretic universe introduced in \cite{Fa:Embedding}, for every separable \cstar-algebra~$A$ and every ultrafilter $\cU$ on $\bbN$, $(A\otimes C(K))^\cU$ is not isomorphic~$A^\infty$ or even to a \cstar-subalgebra of $B^\infty$ for any separable \cstar-algebra $B$.   In \cite[Theorem~C]{farah2020between} the analogous result, in the same forcing extension,  was proven for models that satisfy  the order property as witnessed by a quantifier-free formula (this includes \cstar-algebras and II$_1$ factors). Thus in this forcing extension the conclusion of each of theorems~\ref{T.A}, \ref{T.transfer}, and \ref{C.infty} fails for models of many unstable theories. In this model the real line cannot be covered by fewer than $2^{\aleph_0}$ meager sets, and therefore in it P-points, and even selective ultrafilters, exist (this is well-known, see e.g., \cite[Proposition~8.5.7]{Fa:STCstar}). This implies that  the conclusion of Theorem~\ref{T.split.P-point}  and the equivalence of \eqref{3.T.C*} and \eqref{5.T.C*} in Theorem~\ref{T.C*-P-point} also fail in this model.

One more thing. The question  which \cstar-algebras  have the property that taking the minimal tensor product with them preserves elementary equivalence was raised in \cite[Question~3.10.5]{Muenster}. In \S\ref{S.el.tensor} we give a positive answer  for $C(K)$ ($K$ denotes the Cantor space) and a negative answer for $\cZ$ (the Jiang--Su algebra) and  all UHF algebras.

Our results are proved by model-theoretic and set-theoretic analysis  of the structure $(B^\infty,B^\cU,\pi_\cU)$  (see~\S\ref{S.Term}).    In~\S\ref{S.FV} we compute the theory of a structure of this sort (a `Feferman--Vaught-type' result).   In \S\ref{S.functor} we define a functor $K$  such that $B^\infty$ is elementarily equivalent to $KB$ for every  metric structure~$B$. In \S\ref{S.CS}  we prove that $(B^\infty,B^\cU,\pi_\cU)$ is countably saturated when~$\cU$ is a P-point.  The proofs of Theorems~\ref{T.AaCh}, \ref{T.A}, \ref{T.split.P-point},  and \ref{T.transfer} can be found in  \S\ref{S.Proofs}. In \S\ref{S.el.tensor} we prove that in the category of \cstar-algebras tensoring with $C(K)$ preserves elementarity and tensoring with the Jiang--Su algebra or a UHF algebra does not.  Theorem~\ref{T.C*-P-point} 	 is proved in \S\ref{S.Sato}, and we conclude with a few general remarks in \S\ref{S.Concluding}.    All proofs are given in the case of metric structures but they yield proofs in the case of   classical, discrete, structures--simply ignore the epsilons and the deltas.

\subsection*{Acknowledgments} I would like to thank  
 Aaron Tikuisis and  Chris Schaf\-hauser for raising the question answered in Theorem~\ref{T.AaCh} and for many stimulating discussions. The results of the present paper greatly extend those of (now obsolete) \cite{farah2019between}, where the case of Theorem~\ref{T.A} in which $B$ is a separable, unital \cstar-algebra was proven. This was extended to the non-unital case in a conversation with Aaron Tikuisis in Oberwolfach, August 2019. 
 The special case of Theorem~\ref{T.AaCh} in the case when $\bbK$ is the category of \cstar-algebras and~$F$ is the Elliott invariant, total $K$-theory, or the algebraic $K_1$, was proved by Chris Schafhauser by a delicate argument (unpublished). I am indebted to Andreas Thom for asking whether in the conclusion of Theorem~\ref{T.transfer}  one can require $\Phi_\cU$ to be the quotient map---i.e., whether the conclusion of  Theorem~\ref{T.A} holds. 
I would also like to thank Udi Hrushovski for pointing out to a very misleading bit of notation in an earlier draft of this note and to Bojana Laskovi\'c for finding some typos.   Last, but not least, I am grateful to the anonymous referee for an exceptionally helpful report.  
 
Preliminary versions of the results from this note were presented during 2019 in several talks and two mini-courses   (one at the YM\cstar A/YW\cstar A conference at the University of Copenhagen in August, and another at the Mathematical Institute of the Serbian Academy of Sciences and Arts in October and November). I would like to thank to both institutions, and to Zoran Petri\'c and Bori\v sa Kuzeljevi\'c in particular, for warm hospitality and stimulating discussions. 

\section{Preliminaries} 

\label{S.Term} We follow the standard model-theoretic terminology, as exposed  in \cite{Mark:Model} or~\cite{ChaKe}. For model theory of  metric structures and \cstar-algebras see \cite{BYBHU} and \cite{Muenster}, respectively, as well as \cite[\S 16]{Fa:STCstar}. In this logic, the interpretations of formulas are $\bbR$-valued and the analog of the Lindenbaum boolean algebra of a theory is a real Banach algebra.  All of our results are stated in model theory of metric structures. They specialize to classical model theory by considering discrete structures as structures in $\{0,1\}$-metric. The special discrete variants of our results, although novel, nontrivial, and with a potential for applications, will not be stated explicitly. Readers not interested in continuous logic can omit all arguments that involve approximating elements of a model up to some~$\e>0$ and still obtain complete proofs. 

By $\varphi^A(\bar a)$ we denote the interpretation of a formula $\varphi(\bar x)$ at a tuple $\bar a$ of the same sort as $\bar x$ in the structure $A$.

\subsection{Reduced powers}\label{S.Reduced} 
 Given a metric language $\calL$, an infinite indexed family of $\calL$-structures $C_j$, for $j\in \bbJ$,\footnote{The only index-set used in our main results is $\bbJ=\bbN$, but some of the intermediate (yet quotable) results hold in larger generality.} 
 and an ideal $\cJ$ on $\bbJ$, the \emph{reduced product}  is the quotient of 
 $\prod_j C_j$ corresponding to the pseudometric 
 \[
 d(a, b)=\inf_{\sfX\in \cJ} \sup_{j\in \bbJ\setminus \sfX} d(a_j, b_j).
 \]  
 If $\calL$ is a discrete language, then the domain of the reduced product  associated with $\cJ$ is the set of equivalence classes on $\prod_{j\in \bbJ} C_j$  where $a$ and $b$  are identified if and only if the set  $\{j\in \bbJ\mid a_j\neq b_j\}$ belongs to $\cJ$. 

 In the case when $\calL$ is a multisorted language, this is used to define every sort of the reduced product. All function symbols are interpreted in the natural way, pointwise, and the relation symbols $R$  are interpreted by ($\bar a$ denotes a tuple of the appropriate sort)
 \[
 R^{\prod_\cJ C_j}(\bar a):=\inf_{\sfX\in \cJ} \sup_{j\in \bbJ\setminus \sfX} R^{M_j}(\bar a_j)
 \]  
 in the metric case. 
 In the classical, discrete, case, we define the interpretation of $R$ by setting $R^{\prod_\cJ C_j}(\bar a)$ to hold if and only if $\{j\in \bbJ\mid R(\bar a_j)$ fails$\}\in \cJ$.

The reduced product associated to $\cJ$ is denoted $\prod_\cJ C_j$.   In the case when all $C_j$ are equal to $B$ we write $B^\cJ$ for $\prod_\cJ B$.
 The two extremal cases are most important. 
 If $\cJ$ is the Fr\'echet ideal, denoted $\Fin$, then $B^\cJ$ is denoted~$B^\infty$, and if $\cJ$ is a maximal ideal then $B^\cJ$ is denoted~$B^\cU$ (where~$\cU$ is the complement of $\cJ$) and called \emph{ultrapower}. If $\cU$ is disjoint from $\cJ$, then we have a natural quotient map  $\pi_\cU\colon B^\cJ\to B^\cU$. 
 
 An element $b$ of  $B^\cJ$  is determined by a representing sequence $(b_i)_{i\in \bbJ}$. We routinely commit the crime of identifying the elements of $B^\cJ$ with the corresponding representing sequences in $\prod_{i\in \bbJ} B_i$ (the latter is considered as a sorted product, where sort $S$ is interpreted as $\prod_{i\in \bbJ} S(B_i)$, where $S(B_i)$ is the interpretation of sort $S$ in $B_i$). This practice is similar to the analogous well-established practice in the case of $L_p$ spaces: It is innocuous, as long as one knows what they are doing.  
For an ideal $\cJ$ we define the diagonal embedding
\begin{equation}
\label{Eq.iota} 
\iota_{B,\cJ}\colon B\to B^\cJ 
\end{equation}
by sending $b$ to the constant representing sequence $(b,b,\dots)$ and 
 identify $B$ with its diagonal image in $B^\cJ$. We write $\iota_{B,\infty}$ for $\iota_{B,\Fin}$. 
By $\bar a$ we denote a tuple  $(a_0, \dots, a_{n-1})$ of an unspecified length (but   `of the appropriate sort', which depends on the context). 
The arity of $\bar a$ will be routinely suppressed, and we will write $\bar a\in B$ for $\bar a\in B^n$ where $n$ is the arity of $\bar a$. 
For the sake of brevity,   variables  are sometimes omitted
and a formula $\varphi(\bar x)$ is written as $\varphi$. 
When dealing with tuples of representing sequences, in order to avoid confusion 
the entries of $\bar a$ will be  denoted $a(0), \dots, a(n-1)$. 
If~$\bar a$ is an $n$-tuple of elements of $B^\cJ$, then $a_j(i)$, for $j\in \bbJ$, is a representing sequence of the $i$-th entry of~$\bar a$.

\subsection{Types and saturation} \label{S.Types} We recall definitions of a condition and a type. Fix a metric language $\calL$. A \emph{condition} is an expression $\varphi(\bar x)=r$, where $\varphi(\bar x)$ is an $\calL$-formula and $r$ is a real number. It is \emph{satisfied} by a tuple $\bar a$ of an appropriate sort in an $\calL$-structure $A$ if $\varphi^A(\bar a)=r$  An \emph{$n$-type} is a set of conditions in variables $x_i$, for $i<n$. A \emph{type} is an $n$-type for some $n$. (Types in variables $\bar x$ can be construed as functionals on the Lindenbaum algebra of all $\calL$-formulas all of whose free variables are among $\bar x$, see \cite[\S 4.7]{Muenster} or \cite[\S 16.1]{Fa:STCstar}; although very intuitive, this formulation will not be used explicitly.)  A type $\bt(\bar x)$ is \emph{realized} by a tuple $\bar a$ in an $\calL$-structure $A$ if each of its conditions is satisfied by $\bar a$. It is \emph{satisfiable} in an $\calL$-structure $A$ if for every finite list of conditions $\varphi_i(\bar x)=r_i$, for $i<n$, in $\bt(\bar x)$ and every $\e>0$ some $\bar a$ in $A$  satisfies $\max_{i<n}|\varphi_i^A(\bar a)-r_i|<\e$.  

A type is countable if it is countable as a set of conditions. An $\calL$-structure~$A$ is \emph{countably saturated} if every countable, satisfiable, type in the language obtained by expanding $\calL$ by adding constants for the elements of $A$ is realized in the expansion of $A$ to this language obtained by interpreting these constants in the natural way.\footnote{In \cite{shelah1990classification}, such models are called countably compact. However, for countable (or separable) languages our definition of counfeferman
table saturation coincides with Shelah's.} Both ultrapowers and reduced products associated with the Fr\'echet ideal are countably saturated (see \cite{ChaKe} for the discrete logic, in the metric case see \cite{BYBHU} for ultrapowers and \cite{FaSh:Rigidity} for the reduced powers associated with the Fr\'echet ideal, or see \cite[\S 16]{Fa:STCstar} for both).

\subsection{P-points} 
The space of ultrafilters on $\bbN$ is naturally identified with the \v Cech--Stone compactification $\beta\bbN$ of $\bbN$. Then the space of non-principal ultrafilters is identified with the remainder (corona), $\beta\bbN\setminus \bbN$. 

\begin{definition} \label{Def.P-point} 
An ultrafilter $\cU$ on $\bbN$ is a \emph{P-point} if for every sequence $X_n\in \cU$, for $n\in \bbN$, there exists $X\in \cU$ such that $X\setminus X_n$ is finite for all $n$. 
\end{definition}

Equivalently, $\cU$ is a P-point if the intersection of every countable family of open neighbourhoods of $\cU$ includes an open neighbourhood of $\cU$.   
Not every ultrafilter on $\bbN$ is a P-point: 
By compactness, any countable subset $X$ of $\beta\bbN\setminus \bbN$ has a nonempty set of accumulation points, and none of its members is  a P-point.   
By a classical result of W. Rudin, the Continuum Hypothesis implies that P-points exist (\cite{Ru}). It is  relatively consistent with ZFC that there are no P-points in $\beta\bbN\setminus \bbN$ at all (see \cite[\S 6.4]{Sh:PIF} and \cite{chodounsky2019there}).

\section{Theories of reduced powers} 
\label{S.FV}

In this section we compute the theory of the structure $(B^\infty,B^\cU,\pi_\cU)$ from the theory of $B$. 
Towards this goal, we prove a strengthening of Ghasemi's Feferman--Vaught theorem (\cite{ghasemi2016reduced}). It applies to structures of the form 
\[
(B^\cI,B^\cJ,\pi_\cJ)
\]
where $B$ is an $\calL$-structure, $\cI$ and $\cJ$ are ideals on the index-set $\bbJ$ such that $\cI\subseteq \cJ$, and $\pi_\cJ\colon B^\cI\to B^\cJ$ 
is the quotient map.
Let $\calL^2$ denote the language of this structure. It has two `meta-sorts' corresponding to $B^\cI$ and $B^\cJ$, and each $\calL$-sort corresponds to one sort in each of the two meta-sorts. It also has the function symbol $\pi_\cI$ interpreted as the quotient map. 

In order to state our main result, let 
 $\LBAxx$ be the language of Boolean algebras with the  Boolean operations  $\wedge,\vee, {}^\complement, 0$, and $1$, also equipped with 
constants~$Z^{\zeta}_{t}$ for every $\calL^2$-formula $\zeta$
 and every $t\in \bbQ$, and with an additional predicate $\bbU$. 
The following  is modelled on  the  eponymous notion from  \cite{ghasemi2016reduced} and  \cite[Definition~16.3.2]{Fa:STCstar}. 

\begin{definition} \label{Def.1/k.determined}
For $k\geq 2$ and language $\calL$, an $\calL^2$-formula $\varphi(\bar x)$ is \emph{$k$-determined}\footnote{In \cite{ghasemi2014reduced} the definition of  `$k$-determined' is different, but in both cases this notion is used only within the proof of a theorem.} 
if objects with the following properties  exist. 
 \begin{enumerate}
\item  A finite set $\bbF[\varphi,k]$ of $\calL$-formulas whose free variables are included in the free variables of $\varphi(\bar x)$, 
\item  $\LBAxx$-sentences
$\theta^{\varphi,k}_{l}$, for $0\leq l\leq k$, such that the following conditions hold. 
\begin{enumerate}
\item 	All  constants appearing in  $\theta^{\varphi,k}_l$  are among $Z^{\zeta}_t$, for $\zeta\in \bbF[\varphi,k]$, $t\in \bbQ\cap [0,1]$. 
\item Each $\theta^{\varphi,k}_l$ is \emph{increasing}, 
i.e., if $\bar X=(X_i)$ are its free variables  and $A_i\leq A_i'$ for all $i$ are elements of a Boolean algebra $\bbB$,  then 
$(\theta^{\varphi,k}_l)^\bbB(\bar A)$ implies~$(\theta^{\varphi,k}_l)^\bbB(\bar A')$.    
\end{enumerate} 
\pushcounter
 \end{enumerate}
 These objects are required to satisfy the following. 
  Given     $\calL$-structures  $(M_j)_{j\in \bbJ}$, and ideals $\cI\subseteq \cJ$ on $\bbJ$, 
for   $\zeta(\bar x)\in \bbF[\varphi,k]$, 
we write~$\theta^{\varphi,k}_l[\bar a]$ 
for the value of $\theta^{\varphi,k}_l$ in the quotient Boolean algebra $\cP(\bbJ)/\cI$ 
with ($[\sfX]_{\cI}$ denotes the equivalence class of~$\sfX$ modulo $\cI$)\footnote{Note that the value of $Z^\zeta_{l/k}$ depends on $\bar a$.}
\[
Z^{\zeta}_{l/k}:=[\{j: (\zeta(\bar a_j))^{M_j}> l/k\}]_\cI
\]
and with $\bbU(Z^\zeta_{l/k})$ true if and only if $Z^\zeta_{l/k}\notin \cJ$. 
 Then   the following holds (writing $M$ for the structure 
 $(\prod M_j/\cI, \prod_j M_j/\cJ, \pi_\cJ)$) 
\begin{enumerate}
\popcounter
\item\label{I.theta.l.1} $\varphi^M(\bar a)> (l+1)/k$ implies  
$ \theta^{\varphi,k}_l[\bar a]$
and 
\item\label{I.theta.l.2}  $\theta^{\varphi,k}_l[\bar a]$
implies $\varphi^M(\bar a)> (l-1)/k$. 
\end{enumerate}
\end{definition}
Definition~\ref{Def.1/k.determined} asserts that  the value of $\varphi^M(\bar a)$  is determined up to $2/k$  by 
a finite set of formulas $\theta^{\varphi,k}_{l}$ for 
$0\leq l\leq k$, which are in turn determined by the evaluations of the formulas in the finite set $\bbF[\varphi,k]$ in every~$M_j$.

\begin{theorem} \label{T.FV++}
For every metric language $\calL$ and every $k\geq 2$,   every $\calL^2$-formula is $k$-determined. 
\end{theorem}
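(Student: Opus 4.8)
The plan is to argue by induction on the structure of the $\calL^2$-formula $\varphi$, following the template of a Feferman--Vaught analysis while keeping track of continuous-logic thresholds and of the two ideals $\cI\subseteq\cJ$. Throughout I work with representing sequences in $\prod_j M_j$: an $\cI$-sort argument of $\varphi$ is a sequence taken modulo $\cI$, a $\cJ$-sort argument is a sequence taken modulo $\cJ$, and $\pi_\cJ$ is interpreted as ``reduce modulo $\cJ$''. For each $\varphi$ and each $k$ I must produce the finite set $\bbF[\varphi,k]$ together with the increasing $\LBAxx$-sentences $\theta^{\varphi,k}_l$; the extra predicate $\bbU$, true exactly on the $\cJ$-large classes, is what lets these Boolean sentences see the second sort and the quotient map.

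\emph{Base case.} For an atomic $\calL$-formula $\zeta$ all of whose variables are of the $\cI$-sort, the interpretation in $B^\cI$ is the $\cI$-essential supremum $\inf_{\sfX\in\cI}\sup_{j\notin\sfX}\zeta^{M_j}(\bar a_j)$, and a short computation shows that $\varphi^M(\bar a)>l/k$ implies $Z^\zeta_{l/k}\ne 0$, which in turn implies $\varphi^M(\bar a)\geq l/k$. The $\pm 1/k$ slack in \eqref{I.theta.l.1}--\eqref{I.theta.l.2} then lets me take $\theta^{\varphi,k}_l$ to be $Z^\zeta_{l/k}\ne 0$. When the variables are of the $\cJ$-sort, or when $\zeta$ is composed with $\pi_\cJ$, the relevant value is the $\cJ$-essential supremum instead, and the analogous sandwiching is by $\bbU(Z^\zeta_{l/k})$. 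Both Boolean conditions are increasing in the constant, as required, and I set $\bbF[\varphi,k]=\{\zeta\}$, keeping $\bbF$ closed under the complementing operation $\zeta\mapsto 1\dminus\zeta$ (see below).

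\emph{Connectives.} Reducing to a fixed generating family of uniformly continuous connectives, suppose $\varphi=f(\psi_1,\dots,\psi_n)$. By the inductive hypothesis each $\psi_i$ is $k_i$-determined for every $k_i\geq 2$, and I choose the $k_i$ large compared with $k$ and with the modulus of uniform continuity of $f$, so that knowing each $\psi_i^M(\bar a)$ to within $2/k_i$ pins $\varphi^M(\bar a)$ to within $2/k$. I then set $\bbF[\varphi,k]=\bigcup_i\bbF[\psi_i,k_i]$ and build $\theta^{\varphi,k}_l$ as the disjunction, over all threshold patterns of the $\psi_i$ that force $f>l/k$, of the corresponding conjunctions of the $\theta^{\psi_i,k_i}_{l'}$. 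Monotonicity is the one subtlety here: a connective such as $\dminus$ depends on an argument \emph{decreasingly}, which would spoil the increasing requirement. I circumvent this by keeping $\bbF$ closed under $\zeta\mapsto 1\dminus\zeta$, so that ``$\psi_i$ is small'' is recorded as ``$1\dminus\psi_i$ is large'' through a different constant, converting every dependence into an increasing one.

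\emph{Quantifiers, the main obstacle.} The heart of the proof is $\varphi=\sup_x\psi(x,\bar y)$ (and dually $\inf_x$). Given representing sequences for $\bar b$, the value $\sup_a\psi^M(a,\bar b)$ is obtained by choosing the representing sequence of $a$ to optimize coordinatewise. What is achievable at a single coordinate $j$ is governed by which \emph{threshold patterns} of the finitely many $\eta\in\bbF[\psi,k']$ can be realized simultaneously by some $a_j$; for each pattern $p$ this is itself an $\calL$-formula $\chi_p(\bar y)=\sup_x(\cdots)$ in $\bar y$, and I place these (and their complements) into $\bbF[\varphi,k]$. The sentence $\theta^{\varphi,k}_l$ then asserts, purely in terms of the classes $Z^{\chi_p}_t$ in $\cP(\bbJ)/\cI$, that one can select coordinatewise witnesses whose induced Boolean data push $\psi$ past the required threshold on an $\cI$-large (respectively $\cJ$-large) set; the coordinatewise choices glue to an actual representing sequence because elements of a reduced power are nothing but sequences, with sups handled up to $\e$. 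The genuinely new difficulty, absent from the single-sorted theorem of \cite{ghasemi2016reduced}, is the $\cJ$-sort quantifier together with $\pi_\cJ$: there representatives are only defined modulo $\cJ$, so achievability must be expressed through $\bbU$ (whether the achievable index-set is $\cJ$-large), and one must invoke surjectivity of $\pi_\cJ$ to lift a witness in $B^\cJ$ back to $B^\cI$. Verifying that the error terms close up to the prescribed $2/k$ and that the resulting sentence stays increasing is where essentially all the bookkeeping lives.
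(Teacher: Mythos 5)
Your proposal is correct in outline and its core is the same as the paper's: induction on the complexity of the $\calL^2$-formula, with the atomic case split by meta-sort and witnessed by the sentences $Z^{\varphi}_{l/k}\neq 0$ (for the $\cI$-meta-sort) and $\bbU(Z^{\varphi}_{l/k})$ (for the $\cJ$-meta-sort); these are exactly the sentences the paper uses, with the same $\pm 1/k$ slack computation. Where you diverge is in economy and in where you locate the novelty. The paper's written proof consists of the atomic case \emph{only}: the connective steps (for $\tfrac12\varphi$ and $\varphi\dminus\psi$) and the quantifier step are taken verbatim from the single-sorted Feferman--Vaught theorem of \cite{ghasemi2016reduced} and \cite[Theorem~16.3.3]{Fa:STCstar}. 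This delegation is legitimate because nothing in those steps sees the second meta-sort: elements of both $\prod_j M_j/\cI$ and $\prod_j M_j/\cJ$ are just representing sequences in $\prod_j M_j$, so quantification over either sort is quantification over sequences (your ``lift a witness along $\pi_\cJ$'' step is vacuous, since formula values do not depend on the choice of representative), and $\bbU$ is an increasing predicate on $\cP(\bbJ)/\cI$, so it rides through the increasing-sentence bookkeeping untouched. You instead re-derive the connective and quantifier cases (in greater generality, for arbitrary uniformly continuous connectives) and identify the $\cJ$-sort quantifier as ``the genuinely new difficulty,'' when it is precisely the step where nothing new happens. Your re-derivation is workable---the threshold-pattern achievability formulas $\chi_p$ and the closure of $\bbF$ under $\zeta\mapsto 1\dminus\zeta$ are the standard devices---but note one point your sketch glosses over: closing $\bbF$ under complements only converts dependences into increasing ones at the level of the coordinatewise constants; for compound subformulas one must carry dual (``upper-bound'') determinations through the entire induction, since the negation of an increasing $\theta^{\psi,k}_l$ is decreasing and cannot be used directly. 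This is exactly the bookkeeping the cited single-sorted theorem already performs, which is why the paper's proof isolates only the two-sorted delta.
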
 

\begin{proof} The proof is analogous to the proofs of \cite[Theorem~3.1]{ghasemi2016reduced} and \cite[Theorem~16.3.3]{Fa:STCstar}, where it was proven that the formulas in a reduced product of metric structures are $k$-determined (with an appropriate definition). We will follow the template of the latter proof and indicate only the necessary changes. The proof proceeds by finding the required objects and demonstrating that they are as required in the case of  
an arbitrary  $(M_j)_{j\in \bbJ}$, ideals $\cI$  and~$\cJ$ on $\bbJ$ such that $\cI\subseteq \cJ$,  with 
\[
\textstyle \tilde M:=(\prod_j M_j/\cI,\prod_j M_j/\cJ, \pi_\cJ), 
\]
 and $\bar a\in \tilde M$ of the same sort as $\bar x$ in the formula $\varphi(\bar x)$ being considered.
 
  By induction on complexity of the formula  $\varphi$,  it suffices to prove that
the set of all $k$-determined formulas satisfies the following closure properties:
\begin{enumerate}
\item\label{1.FV}  All atomic formulas are $k$-determined. 
\item \label{2.FV} If $\varphi$ is $k$-determined, so is $\frac 12 \varphi$. 
\item If $\varphi$ and $\psi$ are $2k$-determined, then  $\varphi\dminus \psi$ is $k$-determined. 
\item \label{4.FV} If $\varphi$ is $k$-determined, so are $\sup_x\varphi$ and $\inf_x\varphi$
for every variable $x$. 
\end{enumerate}
Only the treatment of case \eqref{1.FV} is different from that in \cite[Theorem~16.3.3]{Fa:STCstar},  and it splits into two cases.

In the first case, the terms of the atomic formula $\varphi$  are evaluated in the meta-sort corresponding to $\prod_j M_j/\cI$. 
Then let 
$\bbF[\varphi,k]:=\{\varphi\}$    and let~$\theta^{\varphi,k}_l$ be the formula $Z^{\varphi}_{l/k}\neq 0$. 
This formula is clearly  increasing. 
Since 
\[
\textstyle \varphi(\bar a)^{M}=\limsup_{j\to \cI} \varphi^{M_j}(\bar a_j), 
\]
we have that  $\varphi(\bar a)^{M}>(l+1)/k$ 
implies $\theta^{\varphi,k}_l[\bar a]$.  Similarly,  $\theta^{\varphi,k}_l[\bar a]$ 
implies 
$
\varphi(\bar a)^{M}>(l-1)/k$, as required.

In the second case, the terms of the atomic formula $\varphi$ are evaluated in the meta-sort corresponding to $\prod_j M_J/\cJ$. 
Then let 
$\bbF[\varphi,k]:=\{\varphi\}$    and let~$\theta^{\varphi,k}_l$ be the formula $\bbU(Z^{\varphi}_{l/k})$. 
This formula is clearly  increasing. Also,  
\[
\textstyle \varphi(\bar a)^{M}=\limsup_{j\to \cJ} \varphi^{M_j}(\bar a_j), 
\]
and since $\bbU(Z)$ if and only if $[Z]_\cJ\neq 0$, the proof proceeds as in the first case. 

  The treatment of \eqref{2.FV}--\eqref{4.FV} is  identical to that in \cite[Theorem~16.3.3]{Fa:STCstar}, and therefore omitted. 
  This completes the inductive  proof. 
 \end{proof}

In the proof of countable saturation (Theorem~\ref{T.CS}) we will need the following theorem that applies to an arbitrary language $\calL$ and is stated in the terminology introduced at the beginning of Section~\ref{S.FV}. 

\begin{corollary}
	 \label{C.FV}  For every  finite set $\bbG$ of  $\calL^2$-formulas\footnote{By adding dummy variables, we may assume that all formulas in $\bbG$ have the same tuple of free variables $\bar x$.} and every $\e>0$ there are a finite  set $\bbF[\bbG,\e]$ of $\calL$-formulas and $\delta>0$ with the following property.  
 If $(M_j)_{j\in \bbJ}$  are  $\calL$-structures and  $\cI$ and $\cJ$  are ideals on  $\bbJ$ such that $\cI\subseteq \cJ$,  
 then with $M=(\prod_j M_j/\cI,\prod_j M_j/\cJ, \pi_\cJ)$ 
 for all  $\bar a_j\in M_j$ and $\bar b_j\in M_j$  of the appropriate sort we have that  
\[
\max_{\zeta\in \bbF[\varphi,\e]}  \limsup_{j\to \cI} |\zeta^{M_j}(\bar a_j)-\zeta^{M_j}(\bar b_j)|<\delta
\]
 implies
 $
\max_{\varphi\in \bbG}| \varphi^M(\bar a)-\varphi^M(\bar b)|<\e
$.
\end{corollary}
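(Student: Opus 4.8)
The plan is to derive Corollary~\ref{C.FV} as a direct consequence of Theorem~\ref{T.FV++}. The key observation is that Theorem~\ref{T.FV++} tells us that each $\calL^2$-formula $\varphi$ is $k$-determined, meaning its value $\varphi^M(\bar a)$ is controlled, up to an error of $2/k$, by the truth values (in the Boolean algebra $\cP(\bbJ)/\cI$, equipped with the predicate $\bbU$) of the finitely many $\LBAxx$-sentences $\theta^{\varphi,k}_l$, and these in turn depend only on the evaluations of the finitely many $\calL$-formulas in $\bbF[\varphi,k]$ in each $M_j$. So the strategy is: fix $k$ large enough that $2/k < \e/2$ (or some suitable fraction of $\e$), and then show that if the $\calL$-formulas in $\bbF[\varphi,k]$ give nearly the same values on $\bar a_j$ and $\bar b_j$ for $\cI$-almost all $j$, then the relevant constants $Z^\zeta_{l/k}$ computed from $\bar a$ and from $\bar b$ agree as elements of $\cP(\bbJ)/\cI$ (including their $\bbU$-status), hence the $\theta^{\varphi,k}_l$ take the same values, hence $\varphi^M(\bar a)$ and $\varphi^M(\bar b)$ are close.

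First I would set $\bbF[\bbG,\e] := \bigcup_{\varphi\in\bbG} \bbF[\varphi,k]$ for the chosen $k$; this is a finite set of $\calL$-formulas since $\bbG$ is finite and each $\bbF[\varphi,k]$ is finite. The main point to make precise is the passage from ``$\zeta^{M_j}(\bar a_j)$ is within $\delta$ of $\zeta^{M_j}(\bar b_j)$ for $\cI$-almost all $j$'' to ``the Boolean constants agree.'' Here the constants are $Z^\zeta_{l/k} = [\{j : \zeta^{M_j}(\bar a_j) > l/k\}]_\cI$. Two real numbers within $\delta$ of each other cannot straddle a threshold $l/k$ unless they lie within $\delta$ of that threshold, so the symmetric difference of the two defining sets is contained in $\{j : |\zeta^{M_j}(\bar a_j)-l/k|<\delta\}$. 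The natural move is to choose the thresholds to be \emph{half-integer} multiples, i.e. replace the cutoffs $l/k$ by cutoffs bounded away from all the values being compared, or equivalently to argue that for small enough $\delta$ the $\limsup_{j\to\cI}$ condition forces the two sets to be $\cI$-equivalent. I expect this is where the real bookkeeping lives.

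Concretely, the hard part will be handling the thresholds cleanly: a naive choice of $\delta$ does not guarantee that $\zeta^{M_j}(\bar a_j)$ and $\zeta^{M_j}(\bar b_j)$ land on the same side of the cutoff $l/k$, because the values could legitimately sit right at the threshold. The standard fix is to run the argument with a finer grid. Replace $k$ by a larger $k'$ (a multiple of $k$) so that between any two of the original cutoffs there are several of the finer cutoffs, choose $\delta$ smaller than the finer grid spacing, and use the increasing property of the $\theta^{\varphi,k'}_l$ (item (2)(b) of Definition~\ref{Def.1/k.determined}) to sandwich the value computed from $\bar b$ between two values computed from $\bar a$ that differ by at most the prescribed error. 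Since $\bbU(Z^\zeta_{l/k'})$ depends only on whether $Z^\zeta_{l/k'}\in\cJ$ and $\cI\subseteq\cJ$, the same $\cI$-level agreement of the defining sets transfers to $\cJ$-level agreement, so the $\bbU$-predicate values also match and the second case (terms evaluated in $\prod_j M_j/\cJ$) causes no extra trouble.

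Finally I would assemble the estimate: by $k'$-determinedness applied to each $\varphi\in\bbG$, the value $\varphi^M(\bar a)$ is pinned down within $2/k'$ by the tuple of truth values $(\theta^{\varphi,k'}_l[\bar a])_{0\le l\le k'}$, and we have arranged that this tuple coincides with $(\theta^{\varphi,k'}_l[\bar b])_{0\le l\le k'}$; therefore $\varphi^M(\bar a)$ and $\varphi^M(\bar b)$ both lie in a common interval of length at most $4/k' < \e$, giving $|\varphi^M(\bar a)-\varphi^M(\bar b)|<\e$. Taking the maximum over the finitely many $\varphi\in\bbG$ completes the proof. The only genuinely delicate step is the threshold/grid argument in the previous paragraph; everything else is a direct unwinding of Definition~\ref{Def.1/k.determined} and Theorem~\ref{T.FV++}.
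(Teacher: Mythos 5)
Your overall route is the same as the paper's: the paper's entire proof is ``choose $k>2/\e$ and $\delta=1/k$; the conclusion follows from the fact that each $\varphi\in\bbG$ is $k$-determined and the finiteness of $\bbG$,'' and your $\bbF[\bbG,\e]=\bigcup_{\varphi\in\bbG}\bbF[\varphi,k]$ with $k$ large is exactly that. You are also right that the only nontrivial point is the threshold problem. But your resolution of it fails at a concrete step: you conclude by asserting that, after passing to a finer grid $k'$, ``this tuple coincides with $(\theta^{\varphi,k'}_l[\bar b])_{0\le l\le k'}$.'' No finite grid and no positive $\delta$ can arrange this coincidence. If some $\zeta$ satisfies $\zeta^{M_j}(\bar a_j)=l/k'$ for all $j$ while $\zeta^{M_j}(\bar b_j)=l/k'+\delta/2$, then (since the level sets are defined by strict inequality) $Z^{\zeta}_{l/k'}$ computed from $\bar a$ is $0$ in $\cP(\bbJ)/\cI$ while computed from $\bar b$ it is $1$, so the truth values of the corresponding $\theta^{\varphi,k'}_l$ genuinely differ; values sitting exactly on a cutoff defeat any a priori choice of grid and $\delta$.

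The second, related, problem is that the increasing property cannot be used the way you use it. What the hypothesis gives, modulo $\cI$, is the \emph{shifted} inclusion $Z^\zeta_{t}(\bar a)\le Z^\zeta_{t-\delta}(\bar b)$, so monotonicity only yields that $\theta^{\varphi,k'}_l$ holds when each constant $Z^\zeta_t$ is \emph{re-interpreted} as $Z^\zeta_{t-\delta}(\bar b)$, i.e.\ the same sentence under a shifted interpretation. Clause \eqref{I.theta.l.2} of Definition~\ref{Def.1/k.determined}, however, applies only to the standard interpretation $\theta^{\varphi,k'}_{l'}[\bar b]$, and nothing in the \emph{statement} of Theorem~\ref{T.FV++} says that $\theta^{\varphi,k'}_l$ with all thresholds lowered by one grid step is, or implies, $\theta^{\varphi,k'}_{l-1}$. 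That shift coherence is precisely the cross-implication $\theta^{\varphi,k}_{l+1}[\bar a]\Rightarrow\theta^{\varphi,k}_{l}[\bar b]$ one needs; granting it, the corollary follows at once (if $\varphi^M(\bar a)>(l+2)/k$ then $\theta^{\varphi,k}_{l+1}[\bar a]$, hence $\theta^{\varphi,k}_{l}[\bar b]$, hence $\varphi^M(\bar b)>(l-1)/k$; symmetrize and take $k$ a bit larger than $2/\e$). It holds for the sentences actually \emph{constructed} in the proof of Theorem~\ref{T.FV++}---in the atomic case $\theta^{\varphi,k}_l$ is $Z^\varphi_{l/k}\neq 0$, respectively $\bbU(Z^\varphi_{l/k})$, and the shifted inclusion gives the implication immediately---but it must be carried through the induction on connectives and quantifiers rather than extracted from the black-box statement. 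So the missing ingredient is not a finer grid; it is either this property of the constructed sentences or a rerun of the induction for the corollary itself. In fairness, the paper's own two-line proof is silent on exactly the same point, and you deserve credit for locating the difficulty; but the fix you propose would not close it.
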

The displayed formula does not refer to $\cJ$, but since $\cJ\supseteq \cI$ no information is lost and this is not a problem. 

\begin{proof} Choose $k>2/\e$ and $\delta=1/k$. Then the conclusion 
 follows from the fact that each $\varphi\in \bbG$ is $k$-determined and the finiteness of $\bbG$. 
\end{proof}

\section{The functor $K$ and elementary embeddings}
\label{S.functor}

For the definitions of conditions,   types, and saturation see \S\ref{S.Types}. 
The structure $B^\infty$ is countably saturated (\cite{FaSh:Rigidity}, see also \cite[\S 16.5]{Fa:STCstar}). If~$B$ is separable and the Continuum Hypothesis holds, then $B^\infty$ has both cardinality and density character equal to $\aleph_1$, and it is  therefore saturated. Since any two elementarily equivalent saturated structures are isomorphic (this is a classical result of Keisler, see e.g., \cite{BYBHU} or \cite[\S 16]{Fa:STCstar} for the continuous variant),  $B^\infty$ is isomorphic to the ultrapower of any one of its separable elementary submodels $C$, via an isomorphism that extends the identity map on $C$. This observation begs the following question ($\iota_{B,\infty}\colon B\to B^\infty$ is the diagonal embedding):  

\begin{question} 
Given a countable language $\calL$, is there a functor $K$ from the category of separable $\calL$-structures into itself such that for every $B$ there exist an  embedding $\iota_{B,K}\colon  B\to  KB$ and an elementary embedding  $\Psi\colon KB\to  B^\infty$  such that  $\iota_{B,\infty}=\Psi\circ \iota_{B,K}$? 
\end{question}

A positive answer is given in \S\ref{S.FunctorK}, giving a satisfactory explanation to the fact that $B^\infty$ behaves as an ultrapower. It involves a canonical construction of a separable substructure $KB$ of $B^\infty$  for every separable $B$ such that the ultrapower of $KB$ is isomorphic to $B^\infty$ via an isomorphism that commutes with the diagonal embeddings of $B$ if the Continuum Hypothesis holds (Theorem~\ref{C.infty}).\footnote{Caveat: the embedding $\Psi$ is, unlike $\iota_{B,\infty}$ and $\iota_{B,K}$, not canonical.}  

\subsection{The functor $K$}\label{S.FunctorK}
Given a metric structure $A$ and a compact Hausdorff space $X$, let
 \[
 C(X,A)=\{f\colon X\to A: f\text{ is continuous}\}. 
 \]
This construction is functorial, because a morphism $f\colon A\to B$ 
defines a morphism from $C(X,A)$ to $C(X,B)$ that sends $g$ to  $f\circ g$. 
Let $\iota_{B,K}\colon B\to KB$ be the diagonal embedding that sends $b\in B$ to the corresponding constant function. We will identify $B$ with its diagonal image in~$KB$. 
  In the case when~$X$ is the Cantor space, $K$, we denote this functor by $K$ and in particular write $KA$ for $C(K,A)$.  
 
If $X$ is a topological space and $A$ is a metric structure, 
then a function   $f\colon K\to A$ is \emph{locally constant}
if there exists a partition of $K$  into clopen sets $K=\bigsqcup_{i<m} U_i$ such that the restriction of $f$ to $U_i$ is constant for every $i$.  
 
\begin{example}\label{Ex.1} 
\begin{enumerate}
\item \label{Ex.0} For every structure $A$ define an inductive system of  structures  $A_n$, for $n\in \bbN$, by $A_0=A$ and  $A_{n+1}=A_n\oplus A_n$ for all $n$, with the connecting maps $a\mapsto (a,a)$. The inductive limit $\lim_n A_n$  is isomorphic to the substructure of $KA$ consisting of  locally constant functions.   
\item If $A$ is discrete then $KA$ consists of locally constant functions as described in \eqref{Ex.0}. 
\item 	\label{Ex.1.2}
If $A$ is a metric structure, then $KA$ is the completion of the structure of all locally constant functions from $K$ into $A$ with respect to the uniform metric, $d_\infty(f,g):=\max_{x\in K} d(f(x),g(x))$. 

\item If $A$ is a \cstar-algebra then $KA\cong A\otimes C(K)$, where the isomorphism respects the diagonal copies of $A$ and $C(K)$. This is a well-known general fact about \cstar-algebras.   One way to see it is to note that the algebraic tensor product, $A\odot C(K)$, is isomorphic to the structure whose elements are locally constant functions as described in~\eqref{Ex.0}. Since $C(K)$ is abelian the algebraic tensor product has the unique \cstar-norm, with the completion isomorphic to  $A\otimes C(K)$. 
\end{enumerate}
\end{example}

Lemma~\ref{L.transfer} below provides a template for the conclusion of Theorem~\ref{T.transfer}. The condition that both maps be 1-Lipshitz is important because in logic of metric structures every function symbol is equipped with a modulus of uniform continuity (1-Lipshitz maps are also known as contractions in the theory of  operator algebras). 

\begin{lemma} \label{L.transfer} For any structure $B$ there is a surjective homomorphism $\pi_0\colon KB\to B$ such that  $\iota_{B,K}\colon B\to KB$ is the  right inverse of $\pi_0$, $\pi_0$ is equal to the identity on the diagonal copy of $B$ in $KB$, and both $\pi_0$ and $\iota_{B,K}$ are 1-Lipshitz. 
\end{lemma}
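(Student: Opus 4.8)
The plan is to let $\pi_0$ be evaluation at a point. Since the Cantor space $K$ is nonempty, fix any point $x_0\in K$ and define $\pi_0\colon KB\to B$ by $\pi_0(f)=f(x_0)$. Every required property should follow from the fact that the $\calL$-structure on $KB=C(K,B)$ is computed coordinatewise, with atomic formulas interpreted by taking a supremum over $K$ of the pointwise values.

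First I would verify that $\pi_0$ is a homomorphism. As each function symbol $F$ of $\calL$ is interpreted on $KB$ pointwise, evaluation commutes with it: $\pi_0(F^{KB}(\bar f))=F^{KB}(\bar f)(x_0)=F^B(\bar f(x_0))=F^B(\pi_0\bar f)$. For an atomic formula $\varphi$ the value $\varphi^{KB}(\bar f)=\sup_{x\in K}\varphi^B(\bar f(x))$ is a supremum of the pointwise values (attained, since $x\mapsto\varphi^B(\bar f(x))$ is continuous on the compact space $K$), so $\varphi^B(\pi_0\bar f)=\varphi^B(\bar f(x_0))\le\varphi^{KB}(\bar f)$, which is exactly the inequality demanded of a homomorphism.

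The remaining assertions are immediate. For $b\in B$ the function $\iota_{B,K}(b)$ is constant with value $b$, so $\pi_0(\iota_{B,K}(b))=b$; hence $\pi_0\circ\iota_{B,K}=\id_B$, which simultaneously shows that $\iota_{B,K}$ is a right inverse of $\pi_0$, that $\pi_0$ is the identity on the diagonal copy of $B$, and that $\pi_0$ is surjective. The Lipschitz estimates are equally direct from $d_\infty(f,g)=\max_{x\in K}d(f(x),g(x))$: the bound $d(f(x_0),g(x_0))\le d_\infty(f,g)$ makes $\pi_0$ a $1$-Lipschitz map, while $d_\infty(\iota_{B,K}(b),\iota_{B,K}(b'))=d(b,b')$ shows $\iota_{B,K}$ is even isometric.

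The only point requiring a little care is the interaction with the completion in Example~\ref{Ex.1}\eqref{Ex.1.2}, where $KB$ is defined as the completion of the locally constant functions. Evaluation at $x_0$ is plainly defined and $1$-Lipschitz on the locally constant functions, so it extends uniquely to a $1$-Lipschitz map on the completion; equivalently, since $K$ has a basis of clopen sets and is compact the locally constant functions are uniformly dense in $C(K,B)$, so that $KB=C(K,B)$ and $\pi_0$ is literally evaluation of a genuine continuous function. I expect this bookkeeping about the completion, together with matching the direction of the relation inequality to the paper's convention for homomorphisms, to be the only mildly delicate step; everything else is a pointwise verification.
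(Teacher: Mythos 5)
Your proof is correct and follows exactly the paper's own argument: the paper likewise fixes a point of $K$ (denoted $0$) and takes $\pi_0$ to be evaluation at that point, asserting the required properties are clear. Your write-up simply supplies the routine pointwise verifications (homomorphism inequality for atomic formulas, right-inverse identity, Lipschitz bounds, and the extension to the completion) that the paper leaves implicit.
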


\begin{proof} Fix a point in $K$, denoted 0 and let $\pi_0$ be the evaluation map at 0. Then $\pi_0$ and $\iota_{B,K}$  clearly have the required properties. 
\end{proof}

The fact that the right inverse $\pi_0$ depends on the choice of a point in $K$ will come back to haunt us. This lack of canonicity of $\pi_0$ is one of the reasons why the proof of Theorem~\ref{T.A} uses the Continuum Hypothesis.

\begin{prop} \label{P.embedding} If $B$ is a metric structure and $\cI$ is an ideal on $\bbN$ such that the Boolean algebra $\cP(\bbN)/\cI$ is atomless then there is an embedding $\Psi\colon KB\to B^\cI$ such that  $\iota_{B,\cI}=\Psi\circ  \iota_{B,K}$.
\end{prop}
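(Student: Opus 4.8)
The plan is to build $\Psi$ on the dense subalgebra of locally constant functions and then extend it by uniform continuity. By Example~\ref{Ex.1}\eqref{Ex.1.2}, $KB$ is the completion, with respect to the uniform metric $d_\infty$, of the structure of locally constant functions $f\colon K\to B$; each such $f$ is constant, say with value $b_i$, on the pieces $U_i$ of some finite clopen partition $K=\bigsqcup_{i<m}U_i$. The engine of the construction is that the clopen algebra $\Clop(K)$ is the countable atomless Boolean algebra, while by hypothesis $\cP(\bbN)/\cI$ is atomless; hence there is a Boolean-algebra embedding $e\colon \Clop(K)\to \cP(\bbN)/\cI$, since every atomless Boolean algebra contains a copy of the countable atomless one. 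I would fix such an $e$ once and for all.

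Given a locally constant $f$ as above, I would choose sets $S_0,\dots,S_{m-1}\subseteq\bbN$ partitioning $\bbN$ with $[S_i]_\cI=e(U_i)$ for each $i$ (possible because $e$ is a homomorphism, so the classes $e(U_i)$ partition the top element $[\bbN]_\cI$), and let $\Psi(f)\in B^\cI$ be the class of any representing sequence $a$ that takes value $b_i$ on $S_i$. The first thing to verify is that this is independent of the chosen partition: any two clopen partitions have a common refinement, and since $e$ is a homomorphism and $f$ takes a single value on each original piece, the representing sequences computed from a partition and from a refinement agree modulo $\cI$. Because $\Psi$ thus respects finite partitions, it is a homomorphism for the function symbols: on a common refinement the representing sequences are constant on each $S_i$, and function symbols are interpreted pointwise, so a term evaluated on $\Psi(\bar f)$ matches the piecewise computation.

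The decisive point is preservation of the metric and of the atomic relations, and this is where atomlessness enters through $\cI$-positivity. Since $e$ is injective, $U_i\neq\emptyset$ forces $e(U_i)\neq 0$, i.e.\ $S_i\notin\cI$. For locally constant $f,g$ on a common refinement with values $b_i,c_i$, take representing sequences $a,a'$ of $\Psi(f),\Psi(g)$ with $a(n)=b_i$ and $a'(n)=c_i$ for $n\in S_i$. Then $d(\Psi(f),\Psi(g))=\inf_{\sfX\in\cI}\sup_{n\notin\sfX}d(a(n),a'(n))=\max_i d(b_i,c_i)$, because each $S_i$ meets the complement of every $\cI$-small $\sfX$, so the inner supremum attains each $d(b_i,c_i)$ no matter which $\sfX$ is used; this equals $d_\infty(f,g)$. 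The identical computation, applied to a relation symbol $R$ interpreted in $KB$ as $\sup_{x\in K}R^B(\bar f(x))$ and in $B^\cI$ by the reduced-product formula, gives $R^{B^\cI}(\Psi(\bar f))=\max_i R^B(\bar b_i)=R^{KB}(\bar f)$. Hence $\Psi$ is an isometry preserving all atomic formulas on the dense set of locally constant functions; since every formula carries a modulus of uniform continuity, $\Psi$ extends uniquely to an embedding of the completion $KB$ into $B^\cI$, with the extension still preserving atomic formulas by continuity of both sides. Finally the diagram commutes: the constant function with value $b$ uses the trivial partition $\{K\}$, for which $S_0=\bbN$, so $\Psi(\iota_{B,K}(b))$ is the constant sequence $(b,b,\dots)=\iota_{B,\cI}(b)$.

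I expect the main obstacle to be the bookkeeping around well-definedness under refinement together with the verification that the reduced-product interpretation of atomic relations collapses to the maximum over the pieces; both hinge on the $\cI$-positivity of the sets $S_i$, which is exactly where the atomlessness of $\cP(\bbN)/\cI$ (guaranteeing the Boolean embedding $e$) is used.
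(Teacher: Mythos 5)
Your proof is correct and takes essentially the same approach as the paper's: both exploit atomlessness of $\cP(\bbN)/\cI$ to embed the countable atomless Boolean algebra $\Clop(K)$ into it by $\cI$-positive sets, define $\Psi$ on the dense subalgebra of locally constant functions (Example~\ref{Ex.1}\eqref{Ex.1.2}), and extend by isometry to the completion $KB$. The paper packages this as a binary tree of $\cI$-positive sets $X_s$ and the increasing union $\bigcup_n K_nB$ with $K_nB\cong B^{2^n}$, which is just another presentation of your Boolean embedding $e$ together with lifted partitions.
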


\begin{proof} A subset of $\bbN$ is called \emph{$\cI$-positive} if it does not belong to $\cI$.  
Since $\cP(\bbN)/\cI$ is an atomless Boolean algebra, we can recursively find $\cI$-positive sets $X_s$, for $s\in \twolN$, with the following properties for all $s$. 
\begin{enumerate}
\item 	 $X_{\langle\rangle}=\bbN$. 
\item Each $X_s$ is equal to the disjoint union  $X_{s^\frown 0}\sqcup X_{s^\frown 1}$. 
\pushcounter
\end{enumerate}
We continue the proof by elaborating Example~\ref{Ex.1} \eqref{Ex.0}. 
For $n\geq 1$ let 
\begin{enumerate}
\popcounter
\item \label{Eq.KnB}
$K_n B:=\{a\in B^\infty: (\forall s\in \{0,1\}^n) (\exists a(s)\in A)(\forall j\in X_s) a_j=a(s)\}$. 
\end{enumerate}
Then $K_n B\cong B^{2^n}$ and $K_n B\subseteq K_{n+1} B$ for all $n$. 

A well-known back-and forth argument shows that all countable atomless Boolean algebras are isomorphic (e.g., \cite[Proposition~1.4.5]{ChaKe:1973}). 
Since the Boolean algebra $\Clop(K)$ of clopen subsets of $K$ is countable and atomless, it is isomorphic to the subalgebra of $\cP(\bbN)/\cI$ generated by $X_s$, for $s\in \twolN$.  By fixing an isomorphism, we can identify $K_n B$ as defined in \eqref{Eq.KnB} with a substructure of $B^\infty$. These identifications are isometric and compatible with one another. 
By Example~\ref{Ex.1} \eqref{Ex.1.2}, $\bigcup_n K_n B$ is dense in $KB$, and we have an isometric isomorphism between $KB$ and the closure of $\bigcup_n K_n B$ inside $B^\cJ$. This isometric isomorphism is the required embedding $\Psi$. 
\end{proof} 


\begin{prop} \label{P.elementary} Suppose that  $\calL$, $B$, $\cI$, and $\Psi$ are  as in Proposition~\ref{P.embedding}.   
\begin{enumerate}
\item\label{1.P.elementary} The embedding $\Psi\colon KB\to B^\cI$ is elementary and $\iota_{B,\cI}=\Psi\circ \iota_{B,K}$. 
\item \label{2.P.elementary} If $\cU$ is an ultrafilter disjoint from $\cI$, $\pi_\cU\colon B^\cI\to B^\cU$ is the quotient map, and $\pi_0\colon KA\to A$ is the evaluation map as in Lemma~\ref{L.transfer},  then the embedding~$\Psi$ can be chosen so that the embedding 
\[
(\Psi,\iota_{B,\cI})\colon (KB,B,\pi_0)\to (B^\cI, B^\cU, \pi_\cU)
\]
 is elementary.  
\end{enumerate}
\end{prop}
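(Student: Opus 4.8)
The plan is to reduce both parts to the Feferman--Vaught computation of Theorem~\ref{T.FV++} combined with the model theory of atomless Boolean algebras. Recall from the proof of Proposition~\ref{P.embedding} that $\Psi$ identifies $KB$ with the reduced power of $B$ over the countable atomless subalgebra $\bbB_0:=\langle[X_s]_\cI:s\in\twolN\rangle$ of $\cP(\bbN)/\cI$, under an isomorphism $\bbB_0\cong\Clop(K)$ carrying each $K_nB$ to the copy of $B^{2^n}$ whose elements are constant on the pieces of the partition $\{X_s:|s|=n\}$. The proof of Theorem~\ref{T.FV++} is an induction on formulas whose clauses refer only to the quotient Boolean algebra and the fibrewise evaluations, so it adapts to the reduced power of $B$ over any Boolean algebra, with the $\LBAxx$-sentences $\theta^{\varphi,k}_l$ evaluated in that algebra. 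Hence, for every formula $\varphi$, every $k\ge 2$, and every $\bar a\in K_nB$, both $\varphi^{KB}(\bar a)$ and $\varphi^{B^\cI}(\Psi\bar a)$ are determined up to $2/k$ by the values of the same sentences $\theta^{\varphi,k}_l$, the former evaluated in $\bbB_0$ and the latter in $\cP(\bbN)/\cI$, and moreover with the \emph{same} constants: for $\bar a$ constant equal to $a(s)$ on $X_s$ one has $Z^\zeta_{l/k}=\bigl[\bigcup\{X_s:\zeta^B(a(s))>l/k\}\bigr]_\cI\in\bbB_0$.

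For part~\eqref{1.P.elementary} I would then invoke the classical quantifier elimination for the theory of atomless Boolean algebras: the inclusion $\bbB_0\hookrightarrow\cP(\bbN)/\cI$ is an embedding between atomless Boolean algebras (atomlessness of the target being the hypothesis of Proposition~\ref{P.embedding}), hence elementary, so the two evaluations of each $\theta^{\varphi,k}_l$ agree. Letting $k\to\infty$ gives $\varphi^{KB}(\bar a)=\varphi^{B^\cI}(\Psi\bar a)$ for all $\bar a\in K_nB$; since $\bigcup_nK_nB$ is dense in $KB$ and formula values are uniformly continuous in their arguments (Corollary~\ref{C.FV}), this extends to all of $KB$, which is exactly elementarity of $\Psi$. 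The intertwining $\iota_{B,\cI}=\Psi\circ\iota_{B,K}$ is already part of Proposition~\ref{P.embedding}.

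For part~\eqref{2.P.elementary} I would first coordinate the construction of the $X_s$ with $\cU$. As the sets $\{X_s:|s|=n\}$ partition $\bbN$ and $\cU$ is an ultrafilter, exactly one of them lies in $\cU$ at each level, and these cohere into a single branch $b\in\{0,1\}^\bbN$, which I take to be the distinguished point $0\in K$ of Lemma~\ref{L.transfer} (so $\pi_0=\ev_b$). With this choice $B\cong KB/\bbU_0$, where $\bbU_0:=\{V\in\Clop(K):b\in V\}$ is the ultrafilter of clopen neighbourhoods of $b$; the inclusion $\bbB_0\hookrightarrow\cP(\bbN)/\cI$ then carries $\bbU_0$ into $\bbU=\cU$, and a direct check on $K_nB$ shows the square $\pi_\cU\circ\Psi=\iota_{B,\cU}\circ\pi_0$ commutes, so $(\Psi,\iota_{B,\cU})$ is a morphism of the two paired structures. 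The paired Feferman--Vaught theorem (Theorem~\ref{T.FV++}) now represents the values of $\calL^2$-formulas in $(KB,B,\pi_0)$ and $(B^\cI,B^\cU,\pi_\cU)$, at arguments from $K_nB$, through the sentences $\theta^{\varphi,k}_l$ evaluated in the Boolean-algebras-with-predicate $(\bbB_0,\bbU_0)$ and $(\cP(\bbN)/\cI,\bbU)$, again with constants in $\bbB_0$.

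The main obstacle, and the only genuinely new ingredient, is the refinement of quantifier elimination needed to compare these last two evaluations: I must show that the theory of atomless Boolean algebras equipped with a distinguished ultrafilter $\bbU$ admits quantifier elimination (and is complete), so that $(\bbB_0,\bbU_0)\hookrightarrow(\cP(\bbN)/\cI,\bbU)$ is elementary. I would prove this by a back-and-forth over finite subalgebras: the $\bbU$-data of a finite subalgebra amounts to which of its atoms lies in $\bbU$, and to extend a partial isomorphism by a new element one splits each atom, matching the position of the new element inside atoms outside $\bbU$ by atomlessness, and matching the $\bbU$-status of the two pieces of the atom in $\bbU$ by the splitting property of a non-principal ultrafilter on an atomless algebra---every $a\in\bbU$ is a disjoint union of a nonzero element of $\bbU$ and a nonzero element outside $\bbU$ (non-principality is automatic here, since an atomless algebra has no atoms and hence no principal ultrafilters). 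Granting this lemma, the two evaluations of each $\theta^{\varphi,k}_l$ coincide and the argument concludes as in part~\eqref{1.P.elementary}, via density and uniform continuity. The one secondary point to verify with care is that the Feferman--Vaught induction really does transfer to $KB$ regarded as the reduced power of $B$ over $\Clop(K)$, and to its pairing with the point-evaluation quotient; this is routine given the local character of the inductive clauses, but it is what legitimises computing $\varphi^{KB}$ through a Boolean algebra.
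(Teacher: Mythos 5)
Your proposal is correct in outline, but it takes a genuinely different route from the paper's. The paper proves part~\eqref{1.P.elementary} by the Tarski--Vaught test: given a tuple $\bar a$ of locally constant functions and an arbitrary $d\in B^\cI$, it uses only Corollary~\ref{C.FV} (to control $\varphi$ by finitely many $\calL$-formulas evaluated fibrewise), partitions $\bbN$ according to the approximate values of those formulas at $(\bar a_j,d_j)$, and then builds a permutation $\chi$ of $\bbN$ whose induced automorphism $\Phi_\chi$ of $B^\infty$ fixes $\bar a$ and the diagonal copy of $B$ and carries $d$, up to $\delta$, onto an element of $\Psi[KB]$; part~\eqref{2.P.elementary} is the same argument with $\chi$ chosen to respect $\cU$, so that $\Phi_\chi$ is an automorphism of the triple $(B^\infty,B^\cU,\pi_\cU)$. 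You instead push everything down to the Boolean level: you compute formula values in $KB$ (and in $(KB,B,\pi_0)$) through $(\Clop(K),\bbU_0)\cong(\bbB_0,\bbU_0)$, where $\bbU_0$ is the ultrafilter of clopen neighbourhoods of the evaluation point, compute them in $B^\cI$ (and in $(B^\cI,B^\cU,\pi_\cU)$) through $(\cP(\bbN)/\cI,\cU)$ via Theorem~\ref{T.FV++}, and deduce elementarity of $\Psi$ from elementarity of the inclusion of Boolean pairs, which you get from quantifier elimination: for atomless algebras in part~\eqref{1.P.elementary}, and for atomless algebras with a distinguished ultrafilter in part~\eqref{2.P.elementary}. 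That second QE lemma is precisely the paper's Proposition~\ref{P.QE}, which the paper only uses later, in the proof of countable saturation (Theorem~\ref{T.CS}); so your route has the merit of handling both parts by a single mechanism and of making explicit that the only input from $\cI$ and $\cU$ is the elementary type of $(\cP(\bbN)/\cI,\cU)$ over $(\bbB_0,\bbU_0)$.

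The one substantive obligation your write-up leaves open is the step you yourself flag: Theorem~\ref{T.FV++} as stated applies to reduced products over ideals on an index set, so computing $\varphi^{KB}$ ``through $\Clop(K)$'' is not a citation but a new lemma---a metric Feferman--Vaught theorem for the Boolean power $C(K,B)$, paired with the point evaluation $\pi_0$. Your reasons for believing it (locally constant tuples, witnesses chosen constant on the cells of a common refinement, the same sentences $\theta^{\varphi,k}_l$ and constants $Z^\zeta_{l/k}$ arising on both sides) are sound, and the discrete analogue is Burris's sheaf-theoretic result cited in the paper's concluding remarks; but establishing it means redoing the induction of Theorem~\ref{T.FV++} in the new setting, which is real work rather than a routine remark. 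The paper's permutation argument avoids any new Feferman--Vaught theorem, at the price of a more hands-on, less reusable construction.
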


\begin{proof} 
\eqref{1.P.elementary}
We will identify $KB$ with its image under $\Psi$. 
 By the Tarski--Vaught test (\cite[Theorem~2.6.1]{Muenster}) it suffices to prove that if  
	$\varphi(\bar x, y)$ is a formula  and  $\bar a$ in $KB$ is a tuple of the 
	appropriate sorts, then\footnote{All variables are sorted, thus in $\inf_{y}$ the variable $y$ ranges over the appropriate sort~$S$. In the case of \cstar-algebras, each sort is an  $n$-ball of the \cstar-algebra for some fixed $n$, hence $y\in S(KB)$ is equivalent to `$y\in KB$ and $\|y\|\leq n$.'} 
\[
\inf_{y\in S(KB)}\varphi^{B^\infty}(\bar a,y)=
\inf_{y\in S(B^\infty)}\varphi^{B^\infty}(\bar a,y). 
\]
This is  equivalent to asserting that for every $\e>0$  and every $d\in B^\infty$ 
there exists $c\in KB$ such that 
$
\varphi^{B^\infty}(\bar a, c)< \varphi^{KB}(\bar a,d)+\e$. 

Since the locally constant functions are dense in $KB$ (Example~\ref{Ex.1} \eqref{Ex.1.2}) by possibly changing $\e$ we can assume that every entry of $\bar a$ is locally constant (i.e., a step-function from $K$ into $B$). Let $\sfX_s$, for $s\in \twolN$, be the subsets of $\bbN$ constructed in the proof of Proposition~\ref{P.embedding} used to define $\Psi$.  Fix an $m$ large enough so that every entry of $\bar a$ is constant on  $\sfX_s$ for every $s\in \{0,1\}^m$.  

By Corollary~\ref{C.FV} there are $n\geq 1$, $\delta>0$, and $\calL$-formulas $\zeta_i(\bar x,y)$, for $i<n$,  with the same free variables as $\varphi$, such that for all $\bar b,\bar b',c$, and $c'$  in~$B^\infty$ of the appropriate sorts,  $\limsup_j \max_{i<n} |\zeta_i^{\tilde B}(\bar b_j,c_j)-\zeta_i^{\tilde B}(\bar b'_j,c'_j)|\leq \delta$
implies $|\varphi^{B^\infty}(\bar b,c)-\varphi^{B^\infty}(\bar b',c')|<\e$.

For $j\in \bbN$ define an element of $\bbR^n$ by  
\[
\bar r(j)=((\zeta_0^B(\bar a_j,\bar d_j), \dots, \zeta_{n-1}^B(\bar a_j, \bar d_j))). 
\]
 Since every formula has bounded codomain, the set of all $\bar r(j)$ is included in a sufficiently large open neighbourhood of 0 in $\bbR^n$ with a compact closure. We can therefore find a partition $\bigsqcup_{i<m'} \sfY_i$ refining  $\bigsqcup_{s\in \{0,1\}^m} \sfX_s$ such that for every $i<m'$ and all $j$ and $j'$ in $\sfY_i$ we have $\max_{i<m} |\bar r(j)_i-\bar r(j')_i|<\delta$.  
For every $j<m'$ there exists a unique $s(j)\in \{0,1\}^m$ such that $\sfY_j\subseteq \sfX_{s(j)}$.  Also, $\sfX_s=\bigcup_{s(j)=s} \sfY_j$ for all $s\in \{0,1\}^m$. 
 
 Let $l$ be such that $2^l> m'$. The sets $\sfX_t$, for $t\in \{0,1\}^{m+l}$, form a partition of $\bbN$ into infinite sets  that refines the partition $\sfX_t$, for $t\in \{0,1\}^m$.  For every $s\in \{0,1\}^m$, the set $\{j<m'\vert \sfY_j\subseteq \sfX_s\}$ has fewer elements than the set $\{t\in \{0,1\}^{m+l}\vert \sfX_t\subseteq \sfX_s\}$.  Therefore there is a surjection $\{0,1\}^{m+l}\to m'$, $t\mapsto j(t)$, such that $s(j(t))$ is an initial segment of~$t$ for every~$t$. 

 We can therefore choose  a permutation~$\chi$ of  $\bbN$ with the following properties. 
\begin{enumerate}
\popcounter
\item [(a)] $\chi[\sfX_s]=\sfX_s$, for all $s\in \{0,1\}^m$. 
\item [(b)] $\chi[\sfY_j]=\sqcup\{\sfX_t\vert t\in \{0,1\}^{m+l}, j(t)=j\}$. 
\pushcounter	
\end{enumerate}
Every  permutation of $\bbN$ naturally acts on $B^\bbN$, by sending $(b_j)$ to $(b_{\chi(j)})$. Since the Fr\'echet ideal is invariant under permutations,~$\chi$ defines an automorphism, denoted  $\Phi_\chi$, of $B^\infty$. Clearly such `permutation automorphism' fixes the diagonal copy of $B$ pointwise. 
  Since $\Phi_\chi[\sfX_s]=\sfX_s$ for all $s\in \{0,1\}^m$,we have  $\Phi_\chi(\bar a)=\bar a$.

For each $j<m'$ choose $k(j)\in \sfY_j$. Define $c\in B^\infty$ by its representing sequence, 
\begin{enumerate}
\popcounter 
\item [(c)]	$c_i:= d_{k(j(t))}$, if $t\in \{0,1\}^{m+l}$ is such that $i\in \sfX_t$. 
\end{enumerate}
Then $d(d_{\chi(i)},c_i)<\delta$ for all $i\in \bbN$, hence $d(\Phi_\chi(d),c)\leq \delta$.  
Also,  $c$ is constant on each $\sfX_t$, for $t\in \{0,1\}^{m+l}$, and it therefore belongs to $\Psi[KB]$. 
By the choice of the sets $\sfY_j$ we have 
\[
\max_{k<m}|\zeta_k^B(\bar a,  c)-\zeta_k^B(\bar a, d)| \leq \delta, 
\] 
and the choice of $\delta$ implies 
$
\varphi^{B^\infty}(\bar a, c)<\varphi^{B^\infty}(\bar a, d)+\e
$ 
as required. 

 \eqref{2.P.elementary} Unlike the proof of \eqref{1.P.elementary},  in this proof we revisit the construction of~$\Psi$ from Proposition~\ref{P.embedding} and choose the sets $\sfX_s$, for $s\in \twolN$ with  additional care.  Since $\cU$ is an ultrafilter, for every partition of a  $\cU$-positive set into two pieces exactly one of the pieces is $\cU$-positive.  Since $\bbN=X_{\langle\rangle}$ is $\cU$-positive, in the construction of $\Psi$ we can  choose the sets $X_s$ so that $X_s\in \cU$ implies $X_{s^\frown 0}\in \cU$ for all $s\in \twolN$, hence  $X_{s}\in \cU$ if and only if $s(i)=0$ for all $i\in \dom(s)$. Therefore $\Psi(\pi_0(f))=\pi_\cU(\Psi(f))$ for all $f\in KB$,  and $(\Psi,\iota_{B,\cI})$ is an embedding of $(KB,B)$ into $(B^\cI, B^\cU)$
 
 The proof that this embedding is elementary is virtually identical to the proof  of \eqref{1.P.elementary}. The only extra care needs to be taken in the choice of the permutation~$\chi$. Since $\cU$ is an ultrafilter and the sets $\sfY_i$, for $i<m'$, form a partition of $\bbN$, there is a unique $i<m'$ such that $\sfY_i\in \cU$.  The permutation~$\chi$ needs to satisfy $\chi[\sfY_i]\in \cU$. In other words, with $l$ and $j(t)<m'$  as in the proof of  \eqref{1.P.elementary},  $\chi$ is chosen so that  $\chi[\sfY_{i}]\supseteq \sfX_t$ for the unique $t\in \{0,1\}^{m+l}$ such that  $\sfX_t\in \cU$ and $j(t)=i$. Therefore $\Phi_\chi$ defines an automorphism of $(B^\infty, B^\cU,\pi_\cU)$ and the proof proceeds as in \eqref{1.P.elementary}.  The other details  are omitted. 
 \end{proof}

The following application will be reformulated in \S\ref{S.el.tensor}. 

\begin{corollary}\label{C.K-preservation} Suppose that $B$ and $C$ are structures in the same language. 
\begin{enumerate}
\item \label{1.C.K} If $B$ and  $C$ are elementarily equivalent,  then so are $KB$ and $KC$. 
\item \label{2.C.K} If $\Psi\colon B\to C$ is elementary, then $K\Psi\colon KB\to KC$ is elementary. 
\end{enumerate}
\end{corollary}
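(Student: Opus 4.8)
The plan is to deduce both parts from two facts already established above: the (non-canonical) \emph{elementary} embedding $\Psi_B\colon KB\to B^\infty$ provided by Proposition~\ref{P.elementary}\eqref{1.P.elementary}, which in particular gives $KB\equiv B^\infty$; and the Feferman--Vaught analysis of reduced powers (the single-sorted case \cite[Theorem~3.1]{ghasemi2016reduced}, equivalently the specialization of Theorem~\ref{T.FV++} with $\cI=\cJ=\Fin$). The upshot of the latter is that the complete theory of $B^\infty$ is computed from that of $B$. Indeed, by $k$-determinacy the value $\varphi^{B^\infty}$ of a sentence $\varphi$ is pinned down up to $2/k$ by the classes $Z^\zeta_{l/k}=[\{j:\zeta^{B}>l/k\}]_{\Fin}$ in $\cP(\bbN)/\Fin$, for $\zeta$ ranging over the finite set $\bbF[\varphi,k]$ of sentences. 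As every factor equals $B$, each such set is $\bbN$ or $\emptyset$ according to whether $\zeta^B>l/k$, so $Z^\zeta_{l/k}\in\{0,1\}$ is determined by the theory of $B$; letting $k\to\infty$ shows that $\varphi^{B^\infty}$ depends only on the theory of $B$. Hence $B\equiv C$ implies $B^\infty\equiv C^\infty$.

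Claim \eqref{1.C.K} is then immediate: if $B\equiv C$ then $KB\equiv B^\infty\equiv C^\infty\equiv KC$.

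For claim \eqref{2.C.K} I would first lift $\Psi$ to the coordinatewise map $\Psi^\infty\colon B^\infty\to C^\infty$, $\Psi^\infty((b_j)_j)=(\Psi(b_j))_j$, and verify that it is elementary by the same Feferman--Vaught computation: for a formula $\varphi$ and a tuple $\bar a$ in $B^\infty$, the classes governing $\varphi^{C^\infty}(\Psi^\infty(\bar a))$ are $[\{j:\zeta^C(\Psi(\bar a_j))>l/k\}]_{\Fin}$, and since $\Psi$ is elementary one has $\zeta^C(\Psi(\bar a_j))=\zeta^B(\bar a_j)$ for every $j$ and every $\zeta\in\bbF[\varphi,k]$; thus these sets coincide with the ones governing $\varphi^{B^\infty}(\bar a)$, giving $\varphi^{B^\infty}(\bar a)=\varphi^{C^\infty}(\Psi^\infty(\bar a))$. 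Next I would run the construction of Proposition~\ref{P.embedding} with one and the same dyadic family $\sfX_s$, $s\in\twolN$, and the same identification of $\Clop(K)$ for both $B$ and $C$, producing elementary embeddings $\Psi_B\colon KB\to B^\infty$ and $\Psi_C\colon KC\to C^\infty$. On a locally constant $f\in KB$ with value $a(s)$ on the clopen piece coded by $s$, both $\Psi_C\circ K\Psi$ and $\Psi^\infty\circ\Psi_B$ produce the representing sequence equal to $\Psi(a(s))$ on $\sfX_s$ (recall $K\Psi(f)=\Psi\circ f$ by functoriality); by density of the locally constant functions (Example~\ref{Ex.1}\eqref{Ex.1.2}) and continuity the square
\[
\Psi_C\circ K\Psi=\Psi^\infty\circ\Psi_B
\]
commutes on all of $KB$.

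Finally I would assemble the diagram. The right-hand side $\Psi^\infty\circ\Psi_B$ is a composition of elementary embeddings, hence elementary, so $\Psi_C\circ K\Psi$ is elementary; since $\Psi_C$ is itself elementary, for every formula $\varphi$ and tuple $\bar a$ in $KB$ we obtain
\[
\varphi^{KB}(\bar a)=\varphi^{C^\infty}\big((\Psi_C\circ K\Psi)(\bar a)\big)=\varphi^{KC}\big(K\Psi(\bar a)\big),
\]
which is exactly the assertion that $K\Psi$ is elementary. The only genuinely delicate point is the bookkeeping in the commuting square --- in particular arranging that $B$ and $C$ use the same family $\sfX_s$ so that $\Psi^\infty$ intertwines $\Psi_B$ and $\Psi_C$; once this is set up, everything else follows directly from $k$-determinacy and Proposition~\ref{P.elementary}. (Claim \eqref{1.C.K} could alternatively be read off from claim \eqref{2.C.K} via a common elementary extension, but the Feferman--Vaught route above is cleaner.)
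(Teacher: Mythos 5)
Your proposal is correct and takes essentially the same route as the paper: part \eqref{1.C.K} via $KB\prec B^\infty\equiv C^\infty\succ KC$ using Proposition~\ref{P.elementary} and the Feferman--Vaught analysis of reduced powers, and part \eqref{2.C.K} via elementarity of the coordinatewise map $\Psi^\infty\colon B^\infty\to C^\infty$ (which the paper gets by citing Theorem~\ref{T.FV++}). The only difference is that you make explicit the commuting square $\Psi_C\circ K\Psi=\Psi^\infty\circ\Psi_B$, arranged by using one and the same family $\sfX_s$ in both instances of Proposition~\ref{P.embedding}; the paper compresses this bookkeeping into a single ``therefore,'' so spelling it out is a legitimate (and welcome) filling-in of detail rather than a different argument.
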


\begin{proof}  
By Proposition~\ref{P.elementary}, $KB$ 
is isomorphic to an elementary submodel of $B^\infty$
and $KC$ is isomorphic to an elementary submodel of $C^\infty$. By \cite[Proposition~3.6]{ghasemi2016reduced}  (or the results of \S\ref{S.FV}), the operation of taking reduced product over $\Fin$  preserves elementary equivalence, and  therefore $B^\infty$ and $C^\infty$ are elementarily equivalent and so are $KB$ and $KC$ by transitivity. This proves \eqref{1.C.K}. To prove \eqref{2.C.K}, note that   if $\Psi\colon B\to C$ is an elementary embedding then $\Psi^\infty\colon B^\infty\to C^\infty$ is an elementary embedding by Theorem~\ref{T.FV++}. Therefore $K\Psi$ is an elementary embedding of $KB$ into~$KC$. 
\end{proof}

\begin{proof}[Proof of Theorem~\ref{C.infty}] Suppose the Continuum Hypothesis holds and fix a separable metric structure $B$ in a separable language. We need to find an isomorphism between $(KB)^\cU$ and $B^\infty$ that commutes with the diagonal embeddings of $B$. By Proposition~\ref{P.elementary}, there is an elementary embedding $\Psi\colon KB\to B^\infty$ such that  $\iota_{B,\infty}=\Psi\circ \iota_{B,K}$. Since $(KB)^\cU$ and $B^\infty$ are both countably saturated (\cite[Proposition~16.4.2]{Fa:STCstar} and \cite[Theorem 16.5.1]{Fa:STCstar}, respectively) and of cardinality $\aleph_1=2^{\aleph_0}$, they are both saturated. Since $B$ is separable,  $\Psi$ can be extended to an isomorphism  $\Lambda\colon (KB)^\cU\to B^\infty$ (\cite[Theorem~16.7.5]{Fa:STCstar}). 
\end{proof}

\section{Countable saturation of composite quotients}
\label{S.CS} 


Theorem~\ref{T.CS} is the main technical result of this section (see \S\ref{S.Types} for the explanation of the terminology). Its proof uses Proposition~\ref{P.QE} which must have been known to Tarski. Since I could not find a reference to this result, a sketch of its proof is included.  

\begin{prop}\label{P.QE}
	The theory $T_\sfU$ of atomless Boolean algebras with an additional unary predicate $\sfU$ for an ultrafilter admits elimination of quantifiers. 	
\end{prop}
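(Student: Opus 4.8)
The plan is to prove quantifier elimination for the theory $T_\sfU$ of atomless Boolean algebras equipped with a unary predicate $\sfU$ for an ultrafilter. The standard criterion I would use is the following well-known back-and-forth characterization: a theory $T$ admits elimination of quantifiers if and only if for any two models $\calM, \calN \models T$, any common substructure $\calA$ (generated by some finite tuple), and any element $b \in \calM$, there is a model $\calN^*$ elementarily equivalent to (or containing) $\calN$ and an embedding extending the inclusion $\calA \hookrightarrow \calN$ with $b$ in its range. Equivalently, it suffices to show that every formula of the form $\exists y\, \psi(\bar x, y)$, with $\psi$ quantifier-free, is equivalent modulo $T_\sfU$ to a quantifier-free formula. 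Since any quantifier-free formula in the language of Boolean algebras with the predicate $\sfU$ can be put into a normal form, I would reduce to eliminating a single existential quantifier over a conjunction of atomic and negated atomic conditions.

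First I would identify the right notion of ``type'' of an element over a finite Boolean subalgebra. Given a finite tuple $\bar a = (a_0, \dots, a_{n-1})$, the atoms of the finite subalgebra they generate partition the unit $1$ into finitely many pieces $e_1, \dots, e_m$ (the nonzero meets of the $a_i$ and their complements). For a new element $y$, its quantifier-free type over $\bar a$ is determined by, for each atom $e_k$ of the generated subalgebra, the data of (i) whether $y \wedge e_k = 0$, whether $y \wedge e_k = e_k$, or neither (i.e.\ $y$ splits $e_k$ nontrivially), together with (ii) the $\sfU$-status of the relevant pieces $y \wedge e_k$ and $e_k \wedge y^\complement$. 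The key structural facts I would invoke are that in an atomless Boolean algebra every nonzero element can be split into two nonzero pieces, and that the ultrafilter $\sfU$ is consistent in the sense that for a partition of $e_k$ exactly one piece lies in $\sfU$ (when $e_k \in \sfU$), while if $e_k \notin \sfU$ then neither piece is in $\sfU$.

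The heart of the argument, and the step I expect to be the main obstacle, is the amalgamation / extension step: given the quantifier-free type data described above, I must verify that it is actually realizable in any atomless model with an ultrafilter whose generated subalgebra on $\bar a$ has the prescribed $\sfU$-pattern. Concretely, for each atom $e_k$ I need to choose $y \wedge e_k$ realizing the prescribed split, and atomlessness guarantees I can split $e_k$ in whatever proportion is demanded; the only genuine constraint is the $\sfU$-coherence: if $e_k \in \sfU$ and the type demands that $y$ split $e_k$, I must be able to arrange which half lands in $\sfU$, and the ultrafilter property gives exactly one $\sfU$-positive half for any given split, so the type must be compatible with that — but both possibilities (the $\sfU$-positive part being $y\wedge e_k$ or its complement in $e_k$) are realizable by choosing the split appropriately, again using atomlessness. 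I would check that the finitely many local choices over the distinct atoms $e_k$ are independent and can be assembled into a single witness $y$, so that every consistent quantifier-free type over $\bar a$ is realized. This shows the back-and-forth system extends, yielding quantifier elimination.

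Finally I would note that the consistency conditions defining a ``consistent'' type are themselves expressible by quantifier-free formulas in $\bar a$ (they only assert Boolean relations among the $a_i$ together with $\sfU$-membership of Boolean combinations of the $a_i$), so the existential formula $\exists y\, \psi(\bar x, y)$ is equivalent to the disjunction, over the finitely many consistent type-patterns compatible with $\psi$, of these quantifier-free conditions. This completes the elimination of a single existential quantifier, and induction on formula complexity then gives full quantifier elimination for $T_\sfU$. I would keep the write-up brief, emphasizing that the only subtlety beyond the classical Tarski argument for atomless Boolean algebras is bookkeeping the $\sfU$-predicate on each atom, which the ultrafilter axioms handle cleanly.
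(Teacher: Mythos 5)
Your proof is correct, but it takes a different route from the paper's. The paper argues semantically: it shows that any isomorphism between finitely generated submodels of two countable models of $T_\sfU$ extends, by a back-and-forth recursion through increasing unions of finite subalgebras, to an isomorphism of the models (the key observation being that a finitely generated submodel is just a finite Boolean algebra whose distinguished ultrafilter is determined by the single atom it contains), and then invokes the standard criterion that ultrahomogeneity of countable models, i.e.\ quantifier-free types determining complete types, yields quantifier elimination. You instead run the direct Tarski-style syntactic test: classify the possible one-point extension patterns of $y$ over the atoms $e_1,\dots,e_m$ of the finite subalgebra generated by $\bar a$ (zero, full, or proper split on each atom, plus the $\sfU$-status of the two pieces inside the unique $\sfU$-positive atom), verify that every consistent pattern is realizable using atomless splitting and the fact that exactly one piece of a partition of a $\sfU$-positive element is $\sfU$-positive, and conclude that $\exists y\,\psi(\bar x,y)$ is equivalent modulo $T_\sfU$ to a finite disjunction of quantifier-free conditions on $\bar x$. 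The mathematical core is the same in both proofs --- your realizability step is exactly what the paper's recursion step needs and leaves implicit --- but the framings buy different things: the paper's argument is shorter given the back-and-forth machinery it already quotes, while yours is more explicit and constructive, producing the quantifier-free equivalent outright without passing through countable models, and making visible precisely which two structural facts about $(B,\sfU)$ are used.
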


\begin{proof} 
The easiest way to prove this may be to show that if $A$ and $B$ are countable models of $T_{\sfU}$ and $\Phi_0\colon A_0\to B_0$ is an isomorphism between finitely generated submodels of $A$ and $B$, then $\Phi_0$ extends to an isomorphism between $A$ and~$B$.   
Fix such $A$ and $B$ and an isomorphism $\Phi_0\colon A_0\to B_0$. 

To find $\Phi$, note that a finitely generated submodel of a model of $T_\sfU$ is a finite Boolean algebra with a distinguished ultrafilter. This ultrafilter is uniquely determined by the atom that belongs to it. Write $A$ and $B$ as increasing unions of finite Boolean algebras, $A=\bigcup_j A_j$ and $B=\bigcup_j B_j$ such that $|A_j|=|B_j|$ for all $j$. By recursion find isomorphisms $\Phi_j\colon A_j \to B_j$ such that $\Phi_{j+1}$ extends $\Phi_j$ for all $j$. This sequence uniquely determines an isomorphism $\Phi\colon A\to B$ that extends $\Phi_0$. 

This shows that in a countable model of $T_\sfU$ the type of a finite subset is uniquely determined by its quantifier-free type. Since the language is countable this holds in an arbitrary model of $T_\sfU$ and completes the proof. 
\end{proof}

\begin{theorem}\label{T.CS}
For every structure $B$, if $\cU$ is a P-point ultrafilter on $\bbN$ then 
the structure $(B^\infty,B^\cU,\pi_\cU)$ is countably saturated. 
\end{theorem}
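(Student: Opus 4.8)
The plan is to prove countable saturation by reducing a type realization problem in the composite structure $(B^\infty, B^\cU, \pi_\cU)$ to a combinatorial problem about the quotient Boolean algebra $\cP(\bbN)/\Fin$, exploiting the Feferman--Vaught analysis from Section~\ref{S.FV} together with the P-point hypothesis. Let me sketch the key steps.

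First I would set up the type to be realized. Fix a countable satisfiable type $\bt(\bar x)$ over $(B^\infty, B^\cU, \pi_\cU)$ expanded by countably many parameters, which I may take to be a decreasing sequence of conditions $\varphi_n(\bar x) = r_n$. By Theorem~\ref{T.FV++} (via Corollary~\ref{C.FV}), the value $\varphi_n^M(\bar a)$ for a candidate tuple is controlled, up to $2/k$, by finitely many $\LBAxx$-sentences $\theta^{\varphi_n,k}_l$ evaluated in $\cP(\bbN)/\Fin$, where the relevant Boolean-algebra constants $Z^\zeta_{l/k}$ track the sets $\{j : \zeta^{B}(\bar a_j) > l/k\}$ for $\zeta$ in the finite set $\bbF[\varphi_n,k]$, and the predicate $\bbU$ records membership in $\cU$. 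So realizing the type amounts to choosing, level by level, representing sequences $\bar a = (\bar a_j)_j$ in $B^{\bbN}$ whose induced truth values of the $\theta$-sentences match the prescribed $r_n$'s.

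Next I would build the realization by a recursive construction along $n$. At each stage I use the countable saturation of $B^\infty$ (and of $B^\cU$) separately---each is individually countably saturated by \S\ref{S.Types}---to produce partial data, and then I must reconcile the $B^\infty$-side and the $B^\cU$-side through the quotient map $\pi_\cU$. This is where the P-point property enters: the conditions of the type impose, for each $n$, a requirement that certain index sets lie in $\cU$ (to control the $B^\cU$-fibre) while simultaneously controlling their $\Fin$-behaviour (for the $B^\infty$-fibre). A P-point lets me diagonalize these countably many $\cU$-membership requirements: given the sequence of sets $X_n \in \cU$ arising at the successive stages, Definition~\ref{Def.P-point} provides a single $X \in \cU$ almost contained in every $X_n$, and I use $X$ to splice together the stagewise choices into a single representing sequence that is simultaneously correct modulo $\cU$ and modulo $\Fin$. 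The quantifier elimination for $T_\sfU$ (Proposition~\ref{P.QE}) guarantees that the only obstructions to matching the $\LBAxx$-types in $\cP(\bbN)/\Fin$ with the ultrafilter predicate are the quantifier-free ones, which the atomless-plus-P-point structure can always satisfy.

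The main obstacle, I expect, is precisely the coherence across the two meta-sorts: one must realize the type in $B^\infty$ and its image in $B^\cU$ \emph{compatibly}, so that $\pi_\cU$ of the chosen $B^\infty$-tuple has the prescribed $B^\cU$-type, and the naive approach of realizing each sort independently fails because the quotient map is not free to re-choose representatives. The P-point hypothesis is exactly what repairs this: without it (e.g., at a non-P-point) the countably many $\cU$-membership demands cannot be simultaneously met by a single set, and saturation genuinely breaks down---consistent with the fact that Theorem~\ref{T.split.P-point} makes the P-point property equivalent to the splitting. So the crux of the argument is the diagonalization step, and I would organize the recursion precisely so that the pseudo-intersection furnished by the P-point yields the final representing sequence in one stroke.
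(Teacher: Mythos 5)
Your proposal follows essentially the same route as the paper's own proof: reduce the type, via Corollary~\ref{C.FV}, to quantifier-free Boolean data in $(\cP(\bbN)/\Fin,\cU)$ (which cells $\{j:\zeta^{B}(\bar a_j)>l/k\}$ are finite, and which lie in $\cU$), invoke Proposition~\ref{P.QE} to see that this quantifier-free data determines the values of the formulas, and then use the P-point to produce a single pseudo-intersection $\sfX\in\cU$ of the countably many cells that must belong to $\cU$, which drives the splicing of approximate realizations into an exact one. The only real deviation is that the paper draws its approximate realizations directly from satisfiability of the type (followed by a pigeonhole stabilization of the Boolean invariants and a recursive partition of $\bbN$ into finite sets), rather than from countable saturation of $B^\infty$ and $B^\cU$ separately, but this does not change the substance of the argument.
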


\begin{proof} Fix $B$,  $\cU$, and a countable satisfiable type $\bt(\bar x)$ over 
\[
\tilde B:=(B^\infty,B^\cU,\pi_\cU).
\] 
 Enumerate $\bt(\bar x)$ as a sequence of conditions
$\varphi_i(\bar x)=s_i$, for $i\in \bbN$. By composing $\varphi_i$ with a piecewise linear function we may  assume that its range  is included in $[0,1]$  for all $i$. 
By   Corollary~\ref{C.FV}, for each $k\geq 1$ 
there exist a finite set of formulas $\bbF(k)$  and $d(k)\geq 1$ 
such that for all  $\bar a$ and $\bar b$  in $\tilde B$ 
 of the appropriate sort, 
\[
\max_{\zeta\in \bbF(k)}\limsup_j |\zeta^{B}(\bar a_j)-\zeta^{B}(\bar b_j)|<1/d(k)
\] 
implies $\max_{j\leq k} |\varphi_j^{\tilde B}(\bar a)-\varphi_j^{\tilde B}(\bar b)|<1/k$. 

We may assume that the codomain of every $\zeta\in \bbF$ is  $[0,1]$ and that
 $\bbF(k)\subseteq \bbF(k+1)$ for all $k$. 
Let $\bbF:=\bigcup_k \bbF(k)$.   
For $\zeta\in \bbF$ and $\bar a$ in $\tilde B$ of the appropriate sort  let  
\[
Z^\zeta_{t}(\bar a):=\{n: \zeta^{\tilde B} (\bar a_n)>t\}. 
\]
For  $k\in \bbN$ and $S\subseteq d(k)\times \bbF(k)$,  let (we declare hitherto undefined sets to be empty): 
\[
\textstyle \Phi_{S,k}(\bar a):=\bigcap_{(j,\zeta)\in S} (Z^\zeta_{j/d(k)}(\bar a)\setminus Z^\zeta_{(j+1)/d(k)}(\bar a)). 
\]
Also let 
\[
\textstyle  \Upsilon_{k}(\bar a):=\{S\subseteq d(k)\times \bbF(k): \Phi_{S,k}(\bar a) \text{ is finite}\}
\]
and 
\[
\textstyle  \Upsilon_{k,\cU}(\bar a):=\{S\subseteq d(k)\times \bbF(k): \Phi_{S,k}(\bar a) \notin \cU\}. 
\]
From the choice of $\bbF(k)$ and $d(k)$, we have the following. 

\begin{claim} \label{C.2.3} 
For $\bar a\in B$, the sets $\Upsilon_k(\bar a)$ and $\Upsilon_{k,\cU}(\bar a)$ determine the value of $\varphi_j^{\tilde B}(\bar a)$ up to $1/k$. 
\end{claim} 

\begin{proof} 
If the codomain of $\zeta$ is  $[0,1]$, then the sets $Z^\zeta_{j/d(k)}\setminus Z^\zeta_{(j+1)/d(k)}$, for $j\leq d(k)$, form 
a partition of $\bbN$. Therefore $\Upsilon_k(\bar a)$ and $\Upsilon_{k,\cU}(\bar a)$, for $k\in \bbN$, together determine the isomorphism type of the finite Boolean algebra generated by $Z^\zeta_{j/d(k)}$, for $\zeta\in \bbF(k)$ and $0\leq j\leq d(k)$ with a distinguished ultrafilter $\cU$.  By Proposition~\ref{P.QE}, the theory of atomless Boolean algebras with a distinguished ultrafilter admits elimination of quantifiers. Therefore the type of the tuple $Z^\zeta_{j/d(k)}$, for $j\leq d(k)$, in the Boolean algebra $\cP(\bbN)/\Fin$ with the distinguished ultrafilter $\cU$ is determined by its quantifier-free type.  By the choice of $\bbF(k)$,  this determines the value of $\varphi_j^{\tilde B}(\bar a)$ for all $j$. 
\end{proof} 

Since  the type $\bt(\bar x)$ is satisfiable, for every $k$ there exists $\bar b(k)$ of the appropriate sort in $B$ 
such that 
$\max_{i<k}|\varphi_i^{\tilde B}(\bar b(k))-s_i|<1/k$. 
Let $\cV$ be a nonprincipal ultrafilter on $\bbN$ (we could use $\cU$,   but we want to emphasize that the role of $\cV$ is different from that of $\cU$). 
Since for a fixed $k$ there are only finitely many possibilities for the sets $\Upsilon_k(\bar a)$ and $\Upsilon_{k,\cU}(\bar a)$, 
there exist $\Upsilon^*_{k}$ and
$\Upsilon^*_{k,\cU}$  such that 
\begin{align*} 
\{n\in \bbN: \Upsilon_{k}(\bar b(n))=\Upsilon^*_{k}\text{ and } 
\Upsilon_{k}(\bar b(n))=\Upsilon^*_{k}\}\in \cV. 
\end{align*}

\begin{claim}  
If $\Upsilon_{k}(\bar b)=\Upsilon^*_{k}$ and $\Upsilon_{k,\cU}(\bar b)=\Upsilon_{k,\cU}^*$  for all $k$ then $\varphi_i^{\tilde B}(\bar b)=s_i$ for all $i$ and $\bar b$ satisfies the type $\bt(\bar x)$. 
\end{claim} 

\begin{proof}
Fix $j$. By Claim~\ref{C.2.3}, we have $\varphi_j^{\tilde B}(\bar b) =\lim_{k\to \cV} \varphi^{\tilde B}_j(\bar b(k))$,  and the latter limit is equal to $s_j$.    
\end{proof}

Since $\cV$ is an ultrafilter,  we have 
\begin{enumerate}
\item \label{Eq.Ups}
$\Upsilon^*_k=\Upsilon^*_{k'}\cap (d(k)\times \bbF(k))$ 
for all $k<k'$, and 
\item \label{Eq.Ups.1}
$\Upsilon^*_{k,\cU}=\Upsilon^*_{k',\cU}\cap (d(k)\times \bbF(k))$ 
for all $k<k'$. 
\pushcounter
\end{enumerate}
By refining the sequence $\{\bar b(k)\}$, we may assume that 
$\Upsilon_k(\bar b(k))=\Upsilon_k^*$ and 
$\Upsilon_{k,\cU}(\bar b(k))=\Upsilon_{k,\cU}^*$ for all $k$. We don't need $\cV$ anymore. (Clearly we did not really need an ultrafilter, but as we have already assumed that a nonprincipal ultrafilter on $\bbN$ existed this was an easy route towards a proof of the simultaneous variant of the Bolzano--Weierstrass theorem for countably many bounded sequences.)

The set $\bbF:=\bigcup_k \bbF(k)$ is countable, and 
\[
\bbS:=\{\Phi_{S,k}(\bar b(j)): j\in \bbN,k\in \bbN, S\notin \Upsilon_{k,\cU}(\bar b(j))\}
\]
is a countable subset of $\cU$. Since $\cU$ is a P-point, we can fix $\sfX\in \cU$ such that $\sfX\setminus \sfY$ is finite for all $\sfY\in \bbS$. 
We will choose a finite  $\sfY_k\subseteq \bbN$, for $2\leq k$, 
such that for all~$k$ the following conditions hold. 
\begin{enumerate}
\popcounter
\item\label{1.Phi} $j\in \bigcup_{i\leq j} \sfY_i$ and $\sfY_i\cap \sfY_j=\emptyset$ if $i\neq j$. 
\item\label{2.Phi} $|\Phi_{S,k}(\bar b(k))\cap \sfY_k|\geq k$ for all $S\subseteq d(k)\times \bbF(k)$ such that $S\notin \Upsilon^*_{k}$.  
\item\label{3.Phi}  $\Phi_{S,k}(\bar b(k+1))\subseteq \bigcup_{j\leq k} \sfY_j$ 
for all $S\subseteq d(k)\times \bbF(k)$ such that   $S\in \Upsilon^*_{k}$. 
\item\label{4.Phi}  $\Phi_{S,k}(\bar b(k+1))\cap \sfX\subseteq \bigcup_{j\leq k} \sfY_j$ 
for all $S\subseteq d(k)\times \bbF(k)$ such that   $S\in \Upsilon^*_{k,\cU}$. 
\pushcounter
\end{enumerate}
We proceed to  describe the recursive construction of the sequence $(\sfY_k)$. It is analogous to the corresponding construction in the proof that the reduced product of metric structures associated to the Fr\'echet filter is countably saturated (\cite[Theorem~16.4]{Fa:STCstar}). 
 For   $S\notin \Upsilon^*_{2}$ the set  $\Phi_{S,2}(\bar b(2))$ is infinite 
and we have $|\Phi_{S,2}(\bar b(2))\cap  m(S)|\geq 2$ for a large enough $m(S)$.  Also, if  $S\in \Upsilon_{2,\cU}^*$ then $\Phi_{2,\cU}(\bar b(2))\notin \bbS$, hence $\Phi_{S,\cU}(\bar b(2))\cap \sfX$ is finite. We can therefore choose $m>2$ large enough so that the set (identifying $m$ with $\{0,\dots, m-1\}$)
\begin{multline*}
\textstyle \sfY_2:=m\cup \bigcup\{\Phi_{S,3}(\bar b(n(3)): S\subseteq d(3)\times \bbF(3), 
S\in \Upsilon^*_{3}\}\\
\cup\bigcup\{\Phi_{S,3}(\bar b(n(3))\cap \sfX\mid S\subseteq d(3)\times \bbF(3), S\in \Upsilon_{3,\cU}^*\}
\end{multline*}
satisfies \eqref{1.Phi}, \eqref{2.Phi},  \eqref{3.Phi}, and \eqref{4.Phi} with $k=2$. 

Suppose that $k\geq 2$ and the sets $\sfY_2,\dots, \sfY_k$ have been  chosen to satisfy 
\eqref{1.Phi}, \eqref{2.Phi},   \eqref{3.Phi}, and~\eqref{4.Phi}. 
The set  $\Phi_{S,k+1}(\bar b(k+1))$
 is infinite for every $S\notin \Upsilon^*_{k+1}$, and 
 any large enough~$m$ satisfies 
$|m\cap (\Phi_{S,k+1}(\bar b(k+1))\setminus \bigcup_{j\leq k} \sfY_j)|\geq k+1$
for all $S\notin \Upsilon^*_{k+1}$. 
Also, the set $\Phi_{S,k+2}(\bar b(k+2))$ is finite for every $S\in \Upsilon^*_{k+1}$, and the set
$\Phi_{S,k+2}(\bar b(k+2))\cap \sfX$ is finite for every $S\in \Upsilon^*_{k+1,\cU}$.  Let $m\geq k+1$ be sufficiently large. Then the set 
\begin{align*}
\textstyle\sfY_{k+1}:= &(m\setminus \bigcup_{j\leq k} \sfY_j)\\ 
&\cup  \bigcup\{\Phi_{S,k+2}(\bar b(k+2): S\subseteq d(k+2)\times \bbF(k+2), 
S\in \Upsilon^*_{k+2}\}\\
&\cup (\bigcup\{\Phi_{S,k+2}(\bar b(k+2))\cap \sfX:S\subseteq d(k+2)\times \bbF(k+2), S\in\Upsilon^*_{k+2,\cU}\} 
\end{align*}
is  finite and it satisfies \eqref{1.Phi},  \eqref{2.Phi},    \eqref{3.Phi}, and \eqref{4.Phi} with $k+1$ replacing~$k$. 

This describes the recursive construction. 
The sets $\sfY_k$, for $k\in \bbN$, are disjoint and by \eqref{3.Phi} their union includes  $\bbN$, hence they form a   partition of~$\bbN$ into finite sets. 
Define  $\bar a\in M$ by its representing sequence 
\[
\bar a_j:=\bar b(n(k))_j\quad\text{ if }j\in \sfY_k.
\] 
Then $\Phi_{S,k}(\bar a)\cap \sfY_k=\Phi_{S,k}(\bar b(n(k))$ for all $k$. 
To prove that  $\bar a$ realizes $\bt(\bar x)$,  
it suffices to prove  $\Upsilon_k(\bar a)=\Upsilon^*_k$ and $\Upsilon_{k,\cU}(\bar a)=\Upsilon_{k,\cU}^*$ for all $k$. 
Fix~$k$ and $S\subseteq d(k)^2\times \bar m(k)$. 

If  $S\notin \Upsilon^*_k$, then \eqref{2.Phi} implies  $|\Phi_{S,k}(\bar a)|\geq j$ 
for all $j\geq k$. Therefore $\Phi_{S,k}(\bar a)$ is infinite, hence  $S$ is an infinite set   
 in $\Upsilon_k(\bar a)$. 

Now suppose $S\in \Upsilon^*_k$.  Then  $S\in \Upsilon^*_j$ for all $j\geq k$. 
Fix any $l\geq k$. 
Then \eqref{3.Phi} and \eqref{Eq.Ups} together imply  $\Phi_{S,l}(\bar a)\cap   \sfY_j=\emptyset$ for all $j\geq l+1$. Hence
 $\Phi_{S,l}(\bar a)$ is  finite, and since $l\geq k$ was arbitrary,  
$S\notin \Upsilon_k(\bar a)$.  

If $S\in \Upsilon^*_{k,\cU}$, then $S\in \Upsilon^*_{j,\cU}$ for all $j\geq k$. Then $\Phi_{S,k}(\bar a)\cap \sfX\cap \sfY_j=\emptyset$ for all $j\geq k$, and since $\sfX\in \cU$ we have  $\Psi_{S,k}(\bar a)\notin \cU$. 

If $S\notin \Upsilon^*_{k,\cU}$, then $S\notin \Upsilon^*_{j,\cU}$ for all $j\geq  k$ and $\sfY_j\cap \sfX\subseteq \Phi_{S,k}(\bar a)$ for all $j\geq k$. Since $\sfX\in \cU$ Since the set $\sfX\setminus\bigcup_{j\geq k} \sfY_j$ is finite,   $\sfX\setminus \Phi_{S,k}(\bar a)$ is also finite and $\Phi_{S,k}(\bar a)\in \cU$. 

This implies that $\bar a$ satisfies the type $\bt$ in $B$. 
\end{proof}

If $\cU$ is not a P-point and $B$ contains an infinite linear order definable by a quantifier-free formula, then $(B^\infty,B^\cU,\pi_\cU)$ is not countably saturated. This simple observation is basis for the proofs of Theorem~\ref{T.split.P-point} \eqref{1.T.split.P-point} $\Rightarrow$  \eqref{2.T.split.P-point} and Theorem~\ref{T.C*-P-point} \eqref{1.T.C*}~$\Rightarrow$~\eqref{3.T.C*}. 

\section{Proofs of theorems~\ref{T.split.P-point}, \ref{T.A}, \ref{T.5.1}, \ref{T.transfer}, and \ref{T.AaCh} (in this order)}
\label{S.Proofs} 

The proofs referred to in the title amount to little more than putting together the results of the previous sections. 

\begin{proof}[Proof of Theorem~\ref{T.split.P-point}]
We first prove that if $B$ is a separable metric structure in a separable language and $\cU$ is a P-point then  the quotient map $\pi_\cU\colon B^\infty \to B^\cU$ has a right inverse.

By Proposition~\ref{P.elementary} \eqref{2.P.elementary}, the structure $\tilde B_0:=(KB,B, \pi_0)$ is isomorphic to an elementary submodel of $\tilde B:=(B^\infty,B^\cU, \pi_\cU)$.  
By Theorem~\ref{T.CS}  and the Continuum Hypothesis,  $\tilde B$ is  saturated. Since $\tilde B_0$ is separable, there is an isomorphism 
$\Phi$ between the ultrapower\footnote{The fact that we use the same ultrafilter $\cU$ is unimportant.} $(\tilde B_0)^\cU$ and $\tilde B$ that extends the identity map on $\tilde B_0$. The isomorphism $\Phi$ sends the ultrapower of the right inverse,~$\Theta$, of $\pi_0$ (Lemma~\ref{L.transfer})  to a map $\Theta_\cU\colon B^\cU\to B^\infty$. Since all the properties required from $\Theta$---being a homomorphism, right inverse to~$\pi_\cU$, and extending $\id_B$---are elementary, \L o\'s's Theorem implies that $\Theta$ is as required. 

For the converse, suppose that $\cU$ is not a P-point, and fix $X_n\in \cU$ for $n\in \bbN$ such that for every $X\in \cU$ the set $X\setminus X_n$ is infinite for some $n\in \bbN$.  Let $B$ be $(\bbN, \min)$ where $\min$ is the obvious binary function. Define  $b\in B^\infty$ by its representing sequence, $b_j=n$ if $j\in X_{n+1}\setminus X_n$ (taking $X_{-1}=\bbN$). 

Assume for the sake of obtaining a contradiction that $\Theta\colon B^\cU\to B^\infty$ is a right inverse to $\pi_\cU$ and let $c=\Theta(b)$. The set 
$
X=\{j\mid c_j=b_j\}
$
 belongs to $\cU$ because  $\pi_\cU(c)=b$.  Fix the least $n$ such that $X\setminus X_n$ is infinite. Identifying $n+1\in B$ with its diagonal image, we have $\min(b,n+1)=n+1$, but $\min(\Theta(b),n+1)=\min(c,n+1)\neq n+1$; contradiction.   
 \end{proof}

\begin{proof}[Proof of Theorem~\ref{T.A}]
Since the Continuum Hypothesis implies that a P-point exists (\cite{Ru}) this is an immediate consequence of Theorem~\ref{T.split.P-point}. 
\end{proof}

Even if $\cU$ is not a P-point and the Continuum Hypothesis fails, $\pi_\cU$ may have a right inverse and the exact sequence $0\to c_\cU(B)\to B^\infty\to B^\cU\to 0$ may split for some choices of $B$. 
If the structure $B$ is finite then $B\cong B^\cU$, and if the language of $B$ has no function symbols then any selector for $\pi_\cU$ splits the exact sequence as in Theorem~\ref{T.A}. The following example is more interesting.

\begin{example}\label{Ex.Z/2Z} Let $\cU$ be any nonprincipal ultrafilter on $\cU$. Let  $B$ be $\bigoplus_\bbN \bbZ/2\bbZ$, considered as a vector space over $\bbZ/2\bbZ$. It is a structure in the language equipped with the symbol for the addition and symbols for multiplication by scalars in $\bbZ/2\bbZ$.  Model-theorists will notice that the theory of $B$ is stable, equal to the theory of $KB$, and that it admits the elimination of quantifiers. Even without using this observation, one sees that both $B^\infty$ and $B^\cU$ are $\bbZ/2\bbZ$-vector spaces of dimension $2^{\aleph_0}$, and  transfinite recursion shows that the exact sequence $0\to c_\cU(B)\to B^\infty \to B^\cU\to 0$ splits. 
\end{example}

The instances of Theorem~\ref{T.AaCh} when $\bbK$ is the category of \cstar-algebras and  $F$ is any of the standard $K$-theoretic functors (in addition to $K_0$ and $K_1$, this includes the algebraic $K$-theory, $K$-theory with coefficients, KK,  and KL; see e.g.,  \cite{Ror:Classification}, \cite{blackadar1998k}) follow from Theorem~\ref{T.transfer} (proved below) by  the standard metamathematical absoluteness arguments (similar to e.g., \cite[Appendix 2]{adams1991amenability}). The following variant of Theorem~\ref{T.transfer} is strong enough to imply the conclusion of  Theorem~\ref{T.AaCh} for \emph{any} functor $F$ as well as Theorem~\ref{T.transfer} itself. 

\begin{theorem}
 \label{T.5.1} 
	Suppose $B$ is a separable metric structure in a separable language~$\calL$. 
	Then there exists a family $\cF$ of quadruples $(C,D,\pi,\Theta)$ with the following properties. 
	\begin{enumerate}
	\item\label{1.bandf} $C$ is a separable substructure of $B^\infty$. 
	\item $D$ is a separable substructure of $B^\cU$. 
	\item \label{3.5.1} $\pi\colon C\to D$ is a surjective homomorphism. 
	\item \label{4.5.1} $\Theta\colon D\to C$ is a homomorphism such that $\pi\circ \Theta=\id_D$. 
	\item\label{6.bandf} ($\cF$ is $\sigma$-closed) In the ordering on $\cF$ defined by $(C_1,D_1, \pi_1, \Theta_1)\leq (C_2,D_2, \pi_2,\Theta_2)$ if $C_1\subseteq C_2$, $D_1\subseteq D_2$, $\pi_2$ extends $\pi_1$, and $\Theta_2$ extends~$\Theta_1$, for every countable increasing sequence $(C_n,D_n, \pi_n, \Theta_n)$, for $n\in \bbN$, in $\cF$ we have $(\bigcup_n C_n, \bigcup_n D_n, \bigcup_n \pi_n, \bigcup_n \Theta_n)\in \cF$.\footnote{Functions are identified with their graphs, giving a meaning to the formulas $\bigcup_n \pi_n$ and $\bigcup_n \Theta_n$.}
	\item \label{7.bandf} ($\cF$ is dense) For every $(C,D,\pi,\Theta)\in \cF$, every $c\in B^\infty$, and every $d\in B^\cU$ some  $(C_1,D_1,\pi_1,\Theta_1)\in \cF$  such that $c\in C_1$ and $d\in D_1$ extends $(C,D,\pi,\Theta)$. 
	\end{enumerate}
Also, for every separable ubstructures $C_0$ of $B^\infty$ and $D_0$ of $B^\cU$, there exists a quadruple $(C, D, \pi,\Theta)$ such that $C_0\subseteq C$ and $D_\subseteq D$.  
\end{theorem}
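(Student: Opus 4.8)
The plan is to relocate the section from the composite structure $(B^\infty,B^\cU,\pi_\cU)$, which need not be countably saturated when $\cU$ is not a P-point, to a single fixed \emph{saturated} model carrying a genuine section, and to realize the members of $\cF$ as separable elementary substructures of that model. Let $\calL^2_\sigma$ denote $\calL^2$ augmented by a function symbol $\sigma$ from the second meta-sort to the first, and let $\hat B:=(KB,B,\pi_0,\iota_{B,K})$ be the $\calL^2_\sigma$-structure in which $\sigma$ is the section $\iota_{B,K}$ of $\pi_0$ provided by Lemma~\ref{L.transfer}; the sentences asserting that $\sigma$ is a homomorphism and that $\pi_0\circ\sigma=\id$ hold in $\hat B$. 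I would fix the ultrapower $\hat M:=\hat B^\cU$, a countably saturated $\calL^2_\sigma$-model whose meta-sorts are $M_1:=(KB)^\cU$ and $M_2:=B^\cU$, with $\rho:=\pi_0^\cU\colon M_1\to M_2$ and a homomorphic section $\varsigma:=\iota_{B,K}^\cU\colon M_2\to M_1$. By Proposition~\ref{P.elementary} the $\calL^2$-reduct of $\hat B$ is elementarily equivalent to $(B^\infty,B^\cU,\pi_\cU)$; in particular $KB\equiv B^\infty$, so $M_1\equiv B^\infty$, while $M_2$ is literally $B^\cU$.

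Next I would define $\cF$ to be the set of quadruples $(C,D,\pi,\Theta)$ for which $C\preceq B^\infty$ and $D\preceq B^\cU$ are separable elementary substructures admitting an $\calL$-elementary embedding $e\colon C\to M_1$ with $\rho[e[C]]\subseteq D$ and $\varsigma[D]\subseteq e[C]$, and with $\pi:=\rho\circ e$ and $\Theta:=e^{-1}\circ(\varsigma{\restriction}D)$. Then $\pi\colon C\to D$ and $\Theta\colon D\to C$ are homomorphisms and $\pi\circ\Theta=\rho\circ\varsigma{\restriction}D=\id_D$, whence $\pi$ is automatically surjective; thus (1)--(4) hold. The crucial point is that $\pi$ bears no relation whatsoever to $\pi_\cU$: this is what allows the construction to go through for every $\cU$, and it is consistent with the fact (Theorem~\ref{T.split.P-point}) that $\pi_\cU$ itself need not split. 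Nonemptiness of $\cF$ follows from Proposition~\ref{P.elementary}(2) together with Lemma~\ref{L.transfer}: the diagonal embedding of $\hat B$ into $\hat M$ exhibits $\hat B$ itself, with $C=\Psi[KB]\preceq B^\infty$ and $D$ the diagonal copy of $B$ in $B^\cU$, as a member.

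The heart of the proof is density, property~(7). Given a member with witness $e$ and targets $c\in B^\infty$, $d\in B^\cU$, I would run a closing-off of length $\omega$. To absorb $c$: since $e$ is elementary and $M_1\equiv B^\infty$ is countably saturated, the type $e(\tp^{B^\infty}(c/C))$ is realized in $M_1$, so $e$ extends to an $\calL$-elementary embedding of a separable $C'\preceq B^\infty$ containing $c$. This creates demands $\rho(e(c))\in M_2=B^\cU$ and $\varsigma(\rho(e(c)))\in M_1$; the first is simply added to $D$, and the second is pulled back to a new point of $B^\infty$ using the countable saturation of $B^\infty$, which enlarges $C$ and defines $\pi(c)$ and $\Theta(\pi(c))$. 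To absorb $d$, one adds $d$ to $D$ and pulls $\varsigma(d)$ back to $B^\infty$ by the same saturation. Each new point generates finitely many further $\rho$- and $\varsigma$-demands, but every type that must be realized lives either in $M_1=(KB)^\cU$ or in $B^\infty$ (the meta-sort $M_2$ being $B^\cU$ itself), and all three are countably saturated unconditionally; since everything stays countable, the process closes off after $\omega$ stages to a member extending the given one and containing $c$ and $d$. The Feferman--Vaught computation of Section~\ref{S.FV} (through $KB\equiv B^\infty$) is what guarantees that these factor-level type realizations control the full $\calL^2_\sigma$-structure.

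Property~(6) is then immediate, since a countable increasing union of $\calL$-elementary embeddings into the \emph{fixed} model $\hat M$ is again one, and elementary substructures and sections are preserved under such unions. The final clause is a routine cofinality consequence of nonemptiness, (6), and (7): enumerating countable dense subsets $\{c_n\}$ of $C_0$ and $\{d_n\}$ of $D_0$ and absorbing $c_n,d_n$ at stage $n$ yields an increasing sequence in $\cF$ whose union lies in $\cF$ by (6) and whose first and second components, being metrically closed substructures, contain $C_0$ and $D_0$. I expect the main obstacle to be the bookkeeping in the closing-off that simultaneously keeps $C\preceq B^\infty$, keeps $\rho[e[C]]\subseteq D$ and $\varsigma[D]\subseteq e[C]$, and realizes every required type in a \emph{factor} rather than in the composite; the conceptual content, and the reason no set-theoretic hypothesis beyond ZFC is needed, is exactly that the section has been moved to the always-saturated template $\hat M$.
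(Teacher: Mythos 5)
Your overall strategy is essentially the paper's: the paper likewise transplants the section into an ultrapower of $(KB,B,\pi_0,\iota_{B,K})$ (by an arbitrary nonprincipal ultrafilter $\cV$; using $\cU$ itself is explicitly permitted there), invokes Proposition~\ref{P.elementary}, Lemma~\ref{L.transfer}, and \L o\'s's Theorem, and then transfers $\pi_0^\cV$ and $\iota_{B,K}^\cV$ to $(B^\infty,B^\cU)$ through partial elementary maps between separable substructures of two elementarily equivalent, countably saturated two-sorted structures. Your verification of (1)--(4), of nonemptiness, and your closing-off argument for density (7) are correct in outline, and the observation that the second meta-sort of $\hat M$ is literally $B^\cU$ is a genuine simplification.

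The genuine gap is at property (6). You define membership in $\cF$ by the \emph{existence} of a witness $e\colon C\to M_1$, but the ordering on $\cF$ constrains only the quadruples $(C,D,\pi,\Theta)$, not the witnesses. Given an increasing chain $(C_n,D_n,\pi_n,\Theta_n)$ in $\cF$, each member comes with its own witness $e_n$, and nothing forces $e_{n+1}\restriction C_n=e_n$; comparable quadruples can have entirely incompatible witnesses. So the phrase ``a countable increasing union of $\calL$-elementary embeddings into the fixed model $\hat M$'' has no referent: there is no increasing chain of embeddings to take the union of, and your argument for (6) collapses. This matters, because $\sigma$-closure for \emph{arbitrary} chains in $\cF$ is exactly what the theorem asserts and what the limit stages of the back-and-forth applications need. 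The repair uses machinery you already set up, but it is a real step: given the chain, write $\pi_\omega=\bigcup_n\pi_n$, $\Theta_\omega=\bigcup_n\Theta_n$, and consider the countable type in $\hat M$, in your language $\calL^2_\sigma$, with variables $x_c$ for $c$ in $\bigcup_n C_n$ and $y_d$ for $d$ in $\bigcup_n D_n$, whose conditions are (i) all conditions, in the two-sorted language without the maps, satisfied by these elements in $(B^\infty,B^\cU)$, and (ii) the graph conditions $d(\rho(x_c),y_{\pi_\omega(c)})=0$ and $d(\varsigma(y_d),x_{\Theta_\omega(d)})=0$. Every finite fragment mentions only elements of some $C_n\cup D_n$ and is satisfied in $\hat M$ via $e_n$; by countable saturation of $\hat M$ (an ultrapower of a separable structure in the language including $\sigma$) the whole type is realized, and a realization, extended by continuity, is a witness for the union quadruple. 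Equivalently and more cleanly, define membership in $\cF$ by this finite-satisfiability condition rather than by the existence of a witness---the two are equivalent by countable saturation---after which $\sigma$-closure really is immediate, while nonemptiness and density are proved from the witness formulation exactly as you did.
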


A family $\cF$ with the properties \eqref{1.bandf}--\eqref{6.bandf}  is called a $\sigma$-complete back-and-forth system in \cite[Definition~8.2.8]{Fa:STCstar}.

\begin{proof}
		Fix   a separable metric structure $B$ in a separable language $\calL$. 
	The  structure $(KB,B)$ is by Proposition~\ref{P.elementary} elementarily equivalent to $(B^\infty,B^\cU)$. The latter structure is countably saturated since both of its components $B^\infty$ and $B^\cU$ are countably saturated and there is no relation between them. Therefore for any $\cV\in \beta\bbN\setminus \bbN$ the structures $\fB_1:=(KB,B)^\cV$ and $\fB_2:=(B^\infty,B^\cU)$ are countably saturated and elementarily equivalent, and there exists a $\sigma$-complete back-and-forth system $\cE$ of partial isomorphisms between separable substructures of $\fB_1$ and $\fB_2$ (see e.g., \cite[Proposition~16.6.4]{Fa:STCstar}). By Lemma~\ref{L.transfer}, there is a surjective homomorphism $\pi_0\colon KB\to B$ such that  $\iota_{B,K}\colon B\to KB$ is the  right inverse of $\pi_0$, $\pi_0$ is equal to the identity on the diagonal copy of $B$ in $KB$, and both $\pi_0$ and $\iota_{B,K}$ are 1-Lipshitz.   Consider the expansion of $(KB,B)$ to $(KB,B,\pi_0,\iota_{B,K})$ (this is a metric structure because both maps are 1-Lipshitz). Its ultrapower is an expansion of $\fB_1$,    $(KB,B,\pi_0, \iota_{B,K})^\cV$. The  properties required in \eqref{3.5.1} and \eqref{4.5.1} of $\pi$ and $\Theta$ are first-order and satisfied by $\pi_0$ and $\iota_{B,K}$. By \L o\'s's Theorem, they are  shared by $\pi_0^\cU$ and $\iota_{B,K}^\cU$.   The  partial isomorphisms in $\cE$ transfer these maps to maps with the  required properties.   

The last sentence is an easy consequence of \eqref{6.bandf} and \eqref{7.bandf}. 
\end{proof}

\begin{proof}[Proof of Theorem~\ref{T.transfer}] This is an immediate consequence of Theorem~\ref{T.5.1} and a standard back-and-forth construction of length $\aleph_1$ using the Continuum Hypothesis (use e.g.,  \cite[Proposition~16.6.1]{Fa:STCstar}). 
	\end{proof}

\begin{proof}[Proof of Theorem~\ref{T.AaCh}] Fix separable metric structures $A$ and $B$ and a morphism $\alpha\colon F(A)\to F(B)$. If  $\Psi\colon A\to B^\infty$  realizes $\iota_{B,\infty}\circ \alpha$, then $\pi_\cU\circ \Psi$ realizes $\iota_{B,\cU}\circ \alpha$. Now assume $\Psi\colon A\to B^\cU$ realizes $\iota_{B,\cU}\circ \alpha$. Since $\Psi$ is continuous, $\Psi[A]$ is separable and by Theorem~\ref{T.5.1}, there is a quadruple $(C,D,\pi,\Theta)$ such that $C$ is a separable substructure of $B^\infty$ that includes $B$, $D$ is a separable substructure of $B^\cU$ that includes $B\cup \Psi[A]$, $\pi\colon C\to D$ is a surjective homomorphism, $\Theta\colon D\to C$ is a homomorphism such that $\pi\circ \Theta=\id_D$, and $\pi$ and $\Theta$ commute with the diagonal embeddings of $B$.  
 Then $\Theta\circ \Psi$ realizes $\iota_{B,\infty}\circ \alpha$. 
\end{proof}


\section{Preservation of elementarity by tensor products of \cstar-algebras}
\label{S.el.tensor}
Some familiarity with the basic theory of \cstar-algebras is required in this section (obviously).  The question which operations on structures  preserve elementarity  was raised in \cite{feferman1959first}, where  preservation results for generalized products were proven. Reduced products of metric structures preserve elementarity by \cite{ghasemi2016reduced} and \S\ref{S.FV}. Tensor products of modules do not preserve elementarity (\cite{Olin:Direct}). The question whether tensoring with any infinite-dimen\-sio\-nal \cstar-algebra preserves elementary equivalence was asked in \cite[Question~3.10.5]{Muenster}.\footnote{This question was asked for the minimal (spatial) tensor product, but since in every example considered in \cite{Muenster} and in this section at least one of the factors is nuclear, in all examples under consideration $A\otimes B$ is necessarily the minimal tensor product.} In addition to the intrinsic interest in preservation results, this was motivated by the general problem of the extent of definability of $K$-groups in  \cstar-algebras (see \cite[\S 3.11 and \S 3.12]{Muenster} for some specific results). In the unital case the first step towards constructing $K_0$ and $K_1$ is tensoring with the algebra $\cK(H)$ of compact operators, and it is not known whether tensoring with $\cK(H)$ preserves elementarity. Tensoring with $C([0,1])$ does not necessarily preserve elementary equivalence This was proved in \cite[Proposition~3.10.3]{Muenster} as an application of \cite[Theorem~3.1]{Phi:Exponential} and the `range of invariant' results from Elliott's classification programme.  Since $C(X)\otimes A$ is isomorphic to $C(X,A)$ and ($K$ denotes the Cantor space) $K\Psi$ is $\Psi\otimes 1_{C(K)}$,  Corollary~\ref{C.K-preservation} has the following consequence.

\begin{corollary} \label{C.elementary} 
\begin{enumerate}
\item If $B$ and $C$ are elementarily equivalent \cstar-algebras, 
then so are $B\otimes C(K)$ and $C\otimes C(K)$.
\item If $\Psi\colon B\to C$ is an elementary embedding, then $\Psi\circ \id_{C(K)}$ is an elementary embedding of $B\otimes C(K)$ into $C\otimes C(K)$. \qed 
\end{enumerate}
\end{corollary}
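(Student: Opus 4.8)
The plan is to read off both statements from Corollary~\ref{C.K-preservation} using the identification, special to \cstar-algebras, of the functor $K$ with the operation of tensoring by $C(K)$. The genuinely \cstar-algebraic input has already been recorded in Example~\ref{Ex.1}: for any \cstar-algebra $A$ the algebraic tensor product $A\odot C(K)$ is the algebra of locally constant $A$-valued functions on $K$ (Example~\ref{Ex.1}~\eqref{Ex.0}), and because $C(K)$ is abelian this carries a unique \cstar-norm whose completion is simultaneously $A\otimes C(K)$ and, by Example~\ref{Ex.1}~\eqref{Ex.1.2}, the structure $KA=C(K,A)$. Thus there is a canonical isomorphism $KA\cong A\otimes C(K)$ respecting the diagonal copies of $A$ and of $C(K)$, and this is the only place where the \cstar-algebra structure is used.

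First I would verify that under this isomorphism the functorial map $K\Psi$ agrees with $\Psi\otimes\id_{C(K)}$. By the definition of $K$ as a functor, $K\Psi$ sends $g\in C(K,A)$ to $\Psi\circ g$; on a locally constant function with constant value $a_i$ on a clopen piece $U_i$ this returns the locally constant function with value $\Psi(a_i)$ on $U_i$, which is exactly the image of $g$ under $\Psi\otimes\id_{C(K)}$ on the dense subalgebra of locally constant functions. Since both maps are continuous (being morphisms of metric structures), they coincide on all of $KA$.

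For part~(1), if $B$ and $C$ are elementarily equivalent then Corollary~\ref{C.K-preservation}~\eqref{1.C.K} gives that $KB$ and $KC$ are elementarily equivalent; elementary equivalence is invariant under isomorphism, so transporting along $KB\cong B\otimes C(K)$ and $KC\cong C\otimes C(K)$ yields that $B\otimes C(K)$ and $C\otimes C(K)$ are elementarily equivalent. For part~(2), if $\Psi\colon B\to C$ is an elementary embedding then Corollary~\ref{C.K-preservation}~\eqref{2.C.K} makes $K\Psi\colon KB\to KC$ an elementary embedding; pre- and post-composing an elementary embedding with isomorphisms again produces an elementary embedding, so under the identifications above $\Psi\otimes\id_{C(K)}\colon B\otimes C(K)\to C\otimes C(K)$ is elementary.

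There is no substantial obstacle here: all the model-theoretic content is already contained in Corollary~\ref{C.K-preservation} (and ultimately in Proposition~\ref{P.elementary} together with the Feferman--Vaught analysis of \S\ref{S.FV}). This corollary is a translation of that result into the language of tensor products, and the only point requiring care is the routine matching of $K\Psi$ with $\Psi\otimes\id_{C(K)}$ described above.
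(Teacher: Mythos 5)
Your proof is correct and takes essentially the same route as the paper: the paper likewise deduces the corollary from Corollary~\ref{C.K-preservation} via the identification $C(K)\otimes A\cong C(K,A)=KA$ and the observation that $K\Psi$ is $\Psi\otimes 1_{C(K)}$. Your explicit verification of $K\Psi=\Psi\otimes\id_{C(K)}$ on locally constant functions just fills in a detail the paper leaves implicit.
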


  The following result, all but proven in \cite[\S 3.5]{Muenster}, 
 transpired during a conversation with Chris Schafhauser at the Fields Institute in 2019, and it is included with his kind permission ($\cZ$ denotes the Jiang--Su algebra, \cite{JiangSu}).  

\begin{prop} There are elementarily equivalent  \cstar-algebras $A$ and $B$ such that 
$A\otimes D$ and $B\otimes D$ are not elementarily equivalent if $D$ is $\cZ$ or  
any UHF algebra $D$. 
\end{prop}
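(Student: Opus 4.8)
The plan is to isolate an \emph{axiomatizable} (hence elementary) property whose truth in $A\otimes D$ is governed by a piece of $K$-theoretic data that is invisible to the first-order theory of $A$ itself. The natural candidate is \emph{real rank zero}, which is known to be axiomatizable in the continuous logic of \cstar-algebras. By a known characterization, for a simple, separable, unital, nuclear \cstar-algebra satisfying the UCT that is $\cZ$-stable, real rank zero is equivalent to the image $\rho(K_0)$ of $K_0$ under the trace pairing being uniformly dense in $\mathrm{Aff}(T)$, the space of continuous affine functions on the trace simplex $T$. Since tensoring with $\cZ$ leaves $K_0$, the traces, and the pairing $\rho$ unchanged, and tensoring with a UHF algebra $M_{\mathfrak n}$ only replaces $\rho(K_0)$ by the subgroup $\rho(K_0)\cdot\bbQ(\mathfrak n)$ (where $\bbQ(\mathfrak n)\subseteq\bbQ$ is the associated group), both $A\otimes\cZ$ and $A\otimes M_{\mathfrak n}$ are $\cZ$-stable classifiable algebras whose real rank is controlled by the closure of $\rho(K_0)$, respectively its $\bbQ(\mathfrak n)$-span, in $\mathrm{Aff}(T)$.

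First I would fix a trace simplex $T$ with $\dim\mathrm{Aff}(T)\ge 2$ and build two simple, separable, unital, nuclear, UCT \cstar-algebras $A$ and $B$, each with trace simplex $T$ and trivial $K_1$, having \emph{no} nontrivial projections, such that $\rho_A(K_0(A))$ is dense in $\mathrm{Aff}(T)$ while $\rho_B(K_0(B))$ is contained in a proper \emph{rational} hyperplane $V\subsetneq\mathrm{Aff}(T)$. Such projectionless inductive-limit algebras (of simple AH type) exist; note that neither $A$ nor $B$ can be $\cZ$-stable, since a $\cZ$-stable algebra with dense $\rho(K_0)$ would have real rank zero and hence be full of projections. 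The point of choosing $A$ and $B$ projectionless is that the $\rho(K_0)$-data is then witnessed by no element of the structure, so it does not enter the first-order theory. Granting $A\equiv B$, tensoring with any $D$ that is $\cZ$ or a UHF algebra produces $\cZ$-stable classifiable algebras: in $A\otimes D$ the image of $K_0$ stays dense (density is preserved under $\bbQ(\mathfrak n)$-scaling), so $A\otimes D$ has real rank zero, whereas in $B\otimes D$ the image stays inside the rational hyperplane $V$ (which absorbs its $\bbQ(\mathfrak n)$-multiples), so $B\otimes D$ does not have real rank zero. Since real rank zero is elementary, $A\otimes D\not\equiv B\otimes D$ for every such $D$ simultaneously.

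The main obstacle is precisely the step glossed over above: proving $A\equiv B$. The whole construction is designed so that the only genuine difference between $A$ and $B$ is the position of $\rho(K_0)$ inside $\mathrm{Aff}(T)$, a global $K_0$-invariant that, for projectionless algebras, has no first-order manifestation. To make this rigorous I would realize $A=\varinjlim A_n$ and $B=\varinjlim B_n$ from the \emph{same} building blocks $A_n\cong B_n$, arranging the two systems of connecting maps to be indistinguishable at the level of the finitary local data the first-order theory can access while differing in their cumulative effect on $K_0$; a Feferman--Vaught/continuous-model-theoretic comparison of the two limits (of the kind developed in \S\ref{S.FV} and in \cite[\S 3.5]{Muenster}) then yields $\mathrm{Th}(A)=\mathrm{Th}(B)$. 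An alternative, following the template of the $C([0,1])$ case \cite[Proposition~3.10.3]{Muenster}, is to replace real rank zero by the exponential length $\mathrm{cel}$, whose boundedness for $D$-stable algebras is again equivalent to density of $\rho(K_0)$ and is reflected in the theory via \cite[Theorem~3.1]{Phi:Exponential}; this reuses the argument already present in \cite[\S 3.5]{Muenster}, which is presumably why the proposition is ``all but proven'' there.
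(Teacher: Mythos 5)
Your distinguishing invariant would work if you had the equivalence: real rank zero is indeed axiomatizable, and by R\o rdam's theorem it is, for simple unital $\cZ$-stable algebras, equivalent to density of $\rho(K_0)$ in $\mathrm{Aff}(T)$. The genuine gap is exactly the step you flag and then defer: proving $A\equiv B$. Your proposed mechanism does not exist. The Feferman--Vaught-type results of \S\ref{S.FV} and \cite{ghasemi2016reduced} concern reduced products, not inductive limits; there is no theorem computing (or comparing) the theory of $\varinjlim A_n$ from ``finitary local data'' of the system, and two limits of the same building blocks along different connecting maps can easily have wildly different theories. Your heuristic that projectionlessness renders the position of $\rho(K_0)$ ``invisible to the first-order theory'' is also unjustified: the theory of $A$ determines the theory of each matrix amplification $M_n(A)$ (ultrapowers commute with $M_n$, so this follows from Keisler--Shelah), hence projections in matrix algebras---which is where $K_0$ lives---are not outside the reach of the theory; whether such $K$-theoretic data is or is not first-order is precisely the delicate open problem discussed in \S\ref{S.el.tensor}, and cannot be assumed in either direction. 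Finally, the existence of the projectionless, non-$\cZ$-stable, simple nuclear UCT algebras with prescribed dense (respectively hyperplane-confined) $\rho(K_0)$ is asserted without a construction. So the proposal reduces the proposition to two unproven claims, one of which (elementary equivalence of two explicitly different separable algebras) is of the same order of difficulty as the results this paper is about.

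The paper sidesteps this entire difficulty with one move that your outline is missing: it does not build two algebras at all, but takes $A$ to be Robert's nuclear monotracial algebra from \cite{Rob:Nuclear} whose ultrapower has more than one trace, and sets $B=A^\cU$. Then $A\equiv B$ is free, by \L o\'s's theorem. The distinguishing sentence after tensoring with $\cZ$ or a UHF algebra $D$ then comes not from real rank zero but from uniqueness of trace together with definability of the Cuntz--Pedersen nullset in $\cZ$-stable algebras (\cite{Oza:Dixmier}, \cite[Theorem~3.5.5]{Muenster}): in the monotracial $A\otimes D$ every positive contraction is, up to $\e$, a scalar multiple of the unit plus a sum of $m(\e)$ commutators of contractions, and for small enough $\e$ this fails in $B\otimes D$, which has two distinct traces. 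If you want to salvage your $K_0$-density approach, the missing ingredient is the same ultrapower trick: you would need a single algebra whose relevant invariant changes when passing to an ultrapower, which is exactly the role Robert's algebra and the trace simplex play in the paper's proof.
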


\begin{proof} The reader is assumed to be familiar with \cite{Muenster}.  Let $A$ be the unital, monotracial, 
  \cstar-algebra constructed in \cite[Theorem 1.4]{Rob:Nuclear}
  such that $A^\cU$ does not have a unique trace and let $B=A^\cU$. 
  Then $A$ and $B$ are elementarily equivalent, $A\otimes \cZ$ is monotracial,  
  and $B\otimes \cZ$ is not. 
  As the Cuntz--Pedersen nullset in $A\otimes \cZ$
  is definable (this was essentially proved in \cite{Oza:Dixmier}, see \cite[Theorem~3.5.5 (3)]{Muenster} for a reformulation of Robert's result),  for every $\e>0$ there exists $m(\e)$ such that  every positive 
  contraction $a$ in $A\otimes \cZ$ can be $\e$-approximated by a sum $m(\e)$-commutators of elements
  of norm $\leq 1$. 
   For  a fixed $\e>0$ this property can be expressed as a statement in the theory of $A\otimes \cZ$. 
  Since $B\otimes \cZ$ does not have a unique trace, for a small enough $\e$ the corresponding assertion 
  fails in $B\otimes \cZ$ and therefore $A\otimes \cZ$ and $B\otimes \cZ$ are not elementarily equivalent. 

If $D$ is a UHF algebra then $A\otimes D$ is monotracial
and since $D$ absorbs~$\cZ$ tensorially so does $A\otimes D$. 
Therefore the Cuntz--Pedersen  nullset of $A\otimes D$ is definable, and the proof 
proceeds as in the case of $\cZ$. 
   \end{proof} 

A \cstar-algebra $A$ is \emph{$\cZ$-absorbing} if $A\otimes \cZ\cong A$.  This is a remarkably strong and important regularity property of nuclear \cstar-algebras (see \cite{winter2017structure}). Also,~$\cZ$ itself is $\cZ$-absorbing (\cite{JiangSu}),   and being $\cZ$-absorbing is (among separable \cstar-algebras) axiomatizable (\cite[Theorem~2.5.2 (21)]{Muenster}). I am inclined to conjecture that tensoring with a fixed UHF algebra (or any nuclear \cstar-algebra)  preserves elementary equivalence among  $\cZ$-absorbing \cstar-algebras, and that tensoring with the Cuntz algebra $\cO_2$ preserves elementary equivalence.

\section{Applications to \cstar-algebras} 
\label{S.Sato}

\begin{proof}[Proof of Theorem~\ref{T.C*-P-point}] Fix a P-point $\cU$ and a separable \cstar-algebra $B$. 

\eqref{1.T.C*} 
It suffices to prove $\pi_\cU[B^\infty\cap B']\supseteq B^\cU\cap \pi_\cU[B]'$, since the other inclusion is automatic.  	
Fix $c\in B^\cU\cap \pi_\cU[B]'$. In the structure $(B^\infty, B^\cU, \pi_\cU)$ consider the 1-type of an element of $B^\infty$ consisting of the conditions $\pi_\cU(x)=c$ and $\|[x,b_n]\|=0$, for a fixed dense subset $\{b_n\mid n\in \bbN\}$ of the unit ball of~$B$. In order to see that this type is satisfiable, fix $m\geq 1$. Then the set $X=\{j\mid \max_{n\leq m} \|[c_j,b_n]\|\leq 1/m\}$ belongs to $\cU$. Define $d\in B^\infty$  by its representing sequence,  $d_j=c_j$ if $j\in X$ and $d_j=0$ if $j\notin X$. Then  $\pi_\cU(d)=c$ and $\max_{n\leq m} \|[d,b_n]\|\leq 2/m$. Since $m$ was arbitrary, the type is satisfiable. By the countable saturation of $(B^\infty,B^\cU,\pi_\cU)$  (Theorem~\ref{T.CS}), it is realized by some $c'\in B^\infty$. Clearly $c'\in B^\infty \cap B'$ and $\pi_\cU(c')=c$. Since $c$ was an arbitrary element of $B^\cU\cap \pi_\cU[B]'$, this completes the proof. 

The proof of \eqref{2.T.C*}, that $\pi_\cU[B^\infty\cap A']=B^\cU\cap \pi_\cU[A]'$ for every separable \cstar-subalgebra $A$ of $B^\infty$,   is analogous to that of \eqref{1.T.C*} and therefore omitted. 

\eqref{4.T.C*} Recall that an ideal $I$ in a \cstar-algebra is a  \emph{$\sigma$-ideal} if for every countable $X\subseteq J$ there exists a positive contraction $a\in J$ such that $ab=ba=b$ for all $b\in X$.  Fix a countable subset $\{a_n\mid n\in \bbN\}$ of $c_\cU(B)$. Consider the type of an element of $B^\infty$ with conditions $x\leq 0$, $\|x\|=1$, $\|x a_n -a_n\|=0$, and $\pi_\cU(x)=0$.  Since $c_\cU(B)$ has an approximate unit (every \cstar-algebra does; see e.g., \cite[\S 1.8]{Fa:STCstar}), this type is satisfiable. By countable saturation of $(B^\infty,B^\cU,\pi_\cU)$ it is realized by some $c$ in $B^\infty$. Then $c$ is as required.

For the remainder of the proof, we do not assume that $\cU$ is a P-point. 

\eqref{1.T.C*} implies \eqref{3.T.C*}: We need to prove that if $\cU$ is not a P-point and $B$ is a UHF algebra then $\pi_\cU[B^\infty\cap B']\not\supseteq B^\cU\cap \pi_\cU[B]'$.  This is similar to the proof of the analogous part of Theorem~\ref{T.split.P-point}.  Suppose that $\cU$ is not a P-point and fix $X_n\in \cU$ for $n\in \bbN$ such that for every $X\in \cU$ the set $X\setminus X_n$ is infinite for some $n\in \bbN$.
In $B$, identified with $\bigotimes_\bbN M_2(\bbC)$, we can choose unitaries $u_j$ and $v_j$ for all $j$ such that $\lim_{j\to \infty} \|[a,u_j]\|=0$ for all $a\in B$ but $\|[u_j,v_n]\|=2$ if $n\geq j$ (the construction is similar to that in the  proof of \cite[Lemma~3.2]{FaHaSh:Model1}, where the analogous statement for the tracial norm was proven). 
Define $c\in B^\cU$ by its representing sequence $c_j=u_n$ if $j\in X_n\setminus X_{n+1}$ (with $X_{-1}=\bbN$). Then $c\in B^\cU\cap B'$. If $b\in B^\infty$ is such that $\pi_\cU(b)=c$, then $X=\{n\mid \|b_n-c_n\|<1/4\}$ belongs to $\cU$. Let $n$ be large enough to have $X\setminus X_n$ infinite. Then $\|[v_n,b]\|\geq \|[v_n,c]\|-\|v_n\|/2>0$, and therefore $b\notin B^\infty \cap B'$. 
Since $b$ was an arbitrary $\pi_\cU$-preimage of $c$, this completes the proof.  

\eqref{5.T.C*} clearly implies \eqref{1.T.C*}, and we have already proved that \eqref{3.T.C*} implies \eqref{5.T.C*}. This completes the proof. 
\end{proof}

\section{Concluding remarks} 
\label{S.Concluding} 
Our main goal was to prove Theorems~\ref{T.AaCh}, \ref{T.A}, \ref{T.split.P-point}, \ref{T.transfer}, \ref{C.infty},  \ref{T.C*-P-point},  and~\ref{T.5.1}. Theorem~\ref{T.FV++} and Theorem~\ref{T.CS} were therefore stated and proved in what appear to be special cases of more general results on multi-sorted structures consisting of various reduced powers of a single structure and quotient maps between them. This line of research may merit further attention. 

Continuous fields of \cstar-algebras are well-studied objects (see \cite[\S IV.1.6]{Black:Operator}), and a theory of continuous fields of metric structures will inevitably be developed. Can the conclusion of Corollary~\ref{C.elementary}  be extended to the  assertion that taking  continuous fields of metric structures over  the Cantor space preserves elementary equivalence? Or, what sort of a  Feferman--Vaught--Ghasemi-style theorem holds for continuous fields of metric structures? (See e.g., \cite[Theorem~2.1]{burris1979sheaf}
for a discrete version of the desired results.) 
It should be noted that \cite[Proposition~3.10.3]{Muenster} implies that $A\prec B$ (this stands for `$A$ is an elementary submodel of $B$') does not imply $C([0,1],A)\prec C([0,1],B)$, and therefore even taking trivial continuous fields of metric structures does not necessarily preserve elementary equivalence. 

Another class of massive (albeit typically not countably saturated---see \cite[Exercise~16.8.36]{Fa:STCstar}) quotient \cstar-algebras derived from continuous fields deserves attention of model-theorists, and I'll use this opportunity to mention them. If $B$ is a separable \cstar-algebra, consider the algebra of all bounded continuous functions from $[0,\infty)$ into $B$, $C_b([0,\infty), B)$. The quotient of this algebra over the ideal $C_0([0,\infty),B)$ provides the setting for the  E-theory (\cite[\S  25]{blackadar1998k}).

Tracial von Neumann algebras give an example of an axiomatizable category not closed under the operation of taking the reduced power. This is because the asymptotic sequence algebra $B^\infty$ is infinite-dimensional and countaby saturated, and therefore not a von Neumann algebra. Therefore the following quotient merits our attention.

\begin{example}\label{Ex.Rinfty}
Let $R$ denote the hyperfinite II$_1$ factor and let $\tau$ denote its unique tracial state. 
Then $(R,\tau)$ is naturally construed as a metric structure with respect to the $\ell_2$-norm, $\|a\|_2:=\sqrt{\tau(a^*a)}$ (see \cite[\S 3.2]{FaHaSh:Model2}).  	Since the category of II$_1$ factors is axiomatizable, the ultrapower of $R$ in this language (also known as the \emph{tracial ultrapower}),~$R^\cU$, is a II$_1$ factor.\footnote{This fact has been known long before general ultrapowers of metric structures had been defined; see the introduction to \cite{FaHaSh:Model2}.} The reduced power $R^\infty$ is however not a II$_1$ factor, or even a von Neumann algebra. One reason for this is that in a von Neumann algebra every directed family of positive contractions has a supremum, and this is not the case with the reduced powers associated with the Fr\'echet ideal. This reduced power belongs to the class of \cstar-algebras  extensively studied in recent years (see \cite{castillejos2018nuclear}, and~\cite{goldbring2018correspondences} for their model-theoretic analysis). 
\end{example}

The structure $R^\infty$  was recently used in~\cite{CGSTW}. By the following.  $R^\infty$ is for all practical purposes isomorphic to an ultrapower of~$KR$ (where $K$ is the functor defined in \S\ref{S.FunctorK} and  $KR$ is embedded into $R^\infty$ by using Proposition~\ref{P.embedding}). 

\begin{prop} Consider $R$, $KR$,  and $R^\infty$ as metric structures with respect to the $\ell_2$-norm.  Then $R^\infty$ is a countably saturated elementary extension of $KR$. 
\end{prop}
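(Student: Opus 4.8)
The plan is to read off both clauses directly from the general machinery already developed, specialized to the single structure $B=R$ taken in the $\ell_2$-metric. Countable saturation will come from the general saturation theorem for reduced powers associated with the Fr\'echet ideal (recalled in \S\ref{S.Types}), and the elementarity of the inclusion of $KR$ from Proposition~\ref{P.elementary}; both of these are stated for an \emph{arbitrary} metric structure. So the only substantive preliminary is to confirm that $R$, equipped with the $\ell_2$-metric, is a bona fide metric structure to which this machinery applies, after which the proof is essentially a citation with $\cI=\Fin$.

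First I would dispose of saturation. Here $R$ is viewed in the standard language of tracial von Neumann algebras: the sorts are the operator-norm balls, the metric on each is $\|a-b\|_2=\sqrt{\tau((a-b)^*(a-b))}$, and each such ball is complete and bounded in this metric, so $R$ is a complete bounded metric structure. The reduced power $R^\infty=\prod_{\Fin}R$ in this language then carries the metric $d(a,b)=\limsup_j\|a_j-b_j\|_2$, with $\tau^{R^\infty}(a)=\lim_{j\to\Fin}\tau(a_j)$, so that $\|\cdot\|_2^{R^\infty}$ is again the $\ell_2$-metric of the reduced power. By the general fact cited in \S\ref{S.Types} that the reduced power of any metric structure associated with the Fr\'echet ideal is countably saturated (\cite{FaSh:Rigidity}; see also \cite[\S 16.5]{Fa:STCstar}), $R^\infty$ is countably saturated. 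No appeal to $R^\infty$ being a von Neumann algebra is made---indeed it is not one, as recalled in Example~\ref{Ex.Rinfty}---and this is irrelevant to the metric model theory.

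Second, for the elementary extension I would invoke Proposition~\ref{P.embedding} and Proposition~\ref{P.elementary} with $B=R$ and $\cI=\Fin$. Since $\cP(\bbN)/\Fin$ is atomless, Proposition~\ref{P.embedding} furnishes an embedding $\Psi\colon KR\to R^\infty$ with $\iota_{R,\infty}=\Psi\circ\iota_{R,K}$, and Proposition~\ref{P.elementary}\eqref{1.P.elementary} upgrades it to an elementary embedding. Identifying $KR$ with its image $\Psi[KR]$ exhibits $R^\infty$ as an elementary extension of $KR$, which completes the proof. The one place where care is genuinely needed---and the only possible obstacle---is the verification that the $\ell_2$-language for $R$ meets the running hypotheses of \S\ref{S.FV} and of the functor $K$ from \S\ref{S.FunctorK}: that the sorts are complete and bounded, that every symbol (in particular $\tau$) carries a modulus of uniform continuity, and that the interpretation of $\tau$ in the reduced power is compatible with its metric as above. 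Once this is in place, Theorem~\ref{T.FV++}, Corollary~\ref{C.FV}, and hence Proposition~\ref{P.elementary} apply verbatim, and no new estimates are required.
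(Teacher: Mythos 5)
Your proposal is correct and follows essentially the same route as the paper: elementarity of the inclusion $KR\subseteq R^\infty$ is obtained from Proposition~\ref{P.elementary}~\eqref{1.P.elementary} (via the embedding of Proposition~\ref{P.embedding}), and countable saturation is quoted from the general theorem for reduced powers over the Fr\'echet ideal (\cite[Theorem~2.1]{FaSh:Rigidity}, see also \cite[Theorem~16.5.1]{Fa:STCstar}). Your additional check that $R$ with the $\ell_2$-metric is a legitimate metric structure is sound but routine; the paper takes it for granted, citing \cite[\S 3.2]{FaHaSh:Model2} in Example~\ref{Ex.Rinfty}.
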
 

\begin{proof} The first part follows from the  first part of Proposition~\ref{P.elementary}  and the second part is a consequence of \cite[Theorem~2.1]{FaSh:Rigidity} (see also \cite[Theorem~16.5.1]{Fa:STCstar}). 
\end{proof}

 \bibliographystyle{amsplain}
\bibliography{ifmainbib}

\end{document}